\documentclass[11pt,reqno]{amsart}
\usepackage{hyperref}

\usepackage{amsmath}
\usepackage{mathabx}
\usepackage{subcaption}
\usepackage{graphicx}

\setlength{\textwidth}{162mm} \setlength{\textheight}{22cm}
\setlength{\headheight}{1cm} \setlength{\topmargin}{-0.4cm}
\setlength{\oddsidemargin}{0cm} \setlength{\evensidemargin}{0cm}
\setlength{\parskip}{1mm} \setlength{\unitlength}{1mm}

\begin{document}

\title[   OCP for the CH Equation]
{Solution of the Optimal Control Problem for the Cahn-Hilliard Equation Using Finite Difference Approximation}
 
\author[G. Garai. \& B. C. Mandal ]
{Gobinda Garai \& Bankim C. Mandal}

\address{Gobinda Garai \newline
School of Basic Sciences,
Indian Institute of Technology Bhubaneswar, India}
\email{gg14@iitbbs.ac.in}

\address{Bankim C. Mandal \newline
School of Basic Sciences,
Indian Institute of Technology Bhubaneswar, India}
\email{bmandal@iitbbs.ac.in} 

\thanks{Submitted.}
\subjclass[]{65M06, 65M12, 49M41}
\keywords{Optimal control problem, Constrained Optimization,  Convergence analysis,  Cahn-Hilliard equation.}

\begin{abstract}
This paper is concerned with the designing, analyzing and implementing linear and nonlinear discretization scheme for the distributed optimal control problem (OCP) with the Cahn-Hilliard (CH) equation as constrained. We propose three difference schemes to approximate and investigate the solution behaviour of the OCP for the CH equation. We present the convergence analysis of the proposed discretization.  We verify our findings by presenting numerical experiments.
\end{abstract}

\maketitle
\numberwithin{equation}{section}
\newtheorem{theorem}{Theorem}[section]
\newtheorem{lemma}[theorem]{Lemma}
\newtheorem{definition}[theorem]{Definition}
\newtheorem{proposition}[theorem]{Proposition}
\newtheorem{remark}[theorem]{Remark}
\allowdisplaybreaks

\section{Introduction} \label{intro}
In this work, we consider the following distributed optimal control problem (OCP)
\begin{equation}\label{OCP}
\min\limits_{u(x,t)\in L^2(0, T; L^2(\Omega))} J(y,u)=\frac{1}{2}\int_{0}^T\int_{\Omega} \left( y(x,t)- \widehat{y}(x,t)\right) ^2 dx dt + \frac{\lambda}{2}\int_0^T\int_{\Omega}  u(x,t)^2 dx dt,
\end{equation}
subject to the Cahn-Hilliard (CH) equation 
\begin{equation}\label{CH_state}
\begin{cases}
\frac{\partial y}{\partial t} = \Delta f(y) - \epsilon^2\Delta^2 y + u, & (x,t)\in\Omega\times(0,T],\\
\partial_{\nu}y=0=\partial_{\nu}(\Delta y), & (x,t)\in\partial\Omega\times(0,T],\\
u(x, 0)=u_0, 
\end{cases} 
\end{equation}
where $\Omega\subset \mathbb{R}^d (d=1, 2)$ is a bounded Lipschitz domain,  $\nu$ is the unit outward normal, $u\in L^2(0, T; L^2(\Omega))$ is the control variable, $\widehat{y}\in L^2(0, T; L^2(\Omega))$ is the target state, $\lambda>0$ is a regularization parameter, $0< \epsilon\ll 1$ and $f(y)=y^3-y$. The purpose of the optimization is to determine the control function $u(x,t)$ in such a way that the state $y(x,t)$ gets as closely as possible to a given desired state $\widehat{y}(x,t)$. Here we follow optimize-then-discretize approach to discretize the OCP \eqref{OCP} - \eqref{CH_state}.

The existence and uniqueness of OCP \eqref{OCP} - \eqref{CH_state} follows from the standard variational arguments \cite{hinze2008optimization, troltzsch2010optimal}, which can be described by its first order optimality system, see [Troltzsch].  The first order optimality system yields the state equation \eqref{CH_state}, the adjoint equation 
\begin{equation}\label{CH_adjoint}
\begin{cases}
-\frac{\partial p}{\partial t} = f'(y)\Delta p - \epsilon^2\Delta^2 p + \widehat{y}-y, & (x,t)\in\Omega\times(0,T],\\
\partial_{\nu}p=0=\partial_{\nu}(\Delta p), & (x,t)\in\partial\Omega\times(0,T],\\
p(x, T)=0, 
\end{cases} 
\end{equation}
and the optimality condition
\begin{equation}\label{CH_optimality}
p(x,t)=\lambda u(x,t),  (x,t)\in\Omega\times(0,T].
\end{equation}

The CH equation, represented by Equation (1) without the control term $u$, is a mathematical model commonly used to describe the evolution of a binary melted alloy below its critical temperature as referenced in \cite{Cahn,Hilliard}. Numerous research studies have been devoted to developing numerical schemes for approximating the solution of the CH equation. These schemes often employ finite difference or finite element methods, with either Dirichlet boundary conditions \cite{DuNicolaides, David} or Neumann boundary conditions \cite{elliott1987numerical, furihata2001stable, shin2011conservative, ElliottZheng, Elliott, StuartHumphries, Christlieb, ChengFeng}, among others. For further insights into the numerical approaches used for solving the CH equation can be found in \cite{lee2014physical}.

The solution of the CH equation encompasses two separate dynamics: phase separation, characterized by rapid changes over time, and phase coarsening, which occurs at a slower pace. During the initial stages, the formation of fine-scale phase regions takes place, with interfaces of width $\epsilon$ separating them. On the other hand, during phase coarsening, the solution tends to converge towards an equilibrium state. The OCP associated with the CH equation becomes crucial in situations where there is a desire to exert influence over the phase separation or coarsening behaviour, or to attain specific concentration profiles. Within this context, the OCP plays an important role by providing a framework to actively control and shape the dynamics of phase separation or coarsening in order to achieve desired outcomes.
The following assumptions are made regarding the nonlinear terms:
\begin{equation}\label{Lipchitz_CH}
\max_y \vert f'(y)\vert \leq M,\; \max_y \vert \widetilde{f}'(y)\vert \leq \widetilde{M}  
\end{equation}
where $M, \widetilde{M}$ are non-negative constant and $\widetilde{f}(y)=y^3$. 

We now talk about a few studies that relate OCP to the CH equation. The authors of \cite{yong1991feedback, hintermuller2012distributed} took into account the OCP for the CH equation and presented the existence results. The studies in \cite{zhao2011optimal, duan2015optimal} are concerned with OCP for the viscous CH equation and their existence results. You may see a study for OCP with convective CH as constraints in \cite{zhao2014optimal}. The OCP for the CH equation with state constraint has been studied in \cite{zheng2015optimal}. However, the above stated body of works do not have any numerical scheme to solve the OCP. This is, as far as we are aware, the first attempt at constructing a numerical framework for the OCP of the CH equation. In order to discretize continuous spatial and temporal variables, we employ the finite difference approach.

Rest of the paper is organised as follows. In Section \ref{Section2} we present the numerical schemes to approximate the solution. In Section \ref{Section3} we show the convergence results of the proposed schemes. Finally we present numerical results in Section \ref{section4}.

\section{Notation and Difference Schemes}\label{Section2}
We propose the finite difference scheme for \eqref{CH_state}-\eqref{CH_adjoint} in one spatial dimension, while an extension to higher spatial dimension is straight forward. To discretize the problem \eqref{CH_state}-\eqref{CH_adjoint} in the spatial domain $\Omega=(a, b)$, we partitioned the domain uniformly by taking mesh size $h=(b-a)/(N-1)$, where $N$ is the number of spatial grid point. Then the discrete domain is $\Omega_h=\{(x_{i-1},x_i): x_i=a+(i-1)h, i=1,2,...,N\}.$ Let $\delta_t=T/N_t$ be the uniform time step corresponding to discrete temporal variable $t_n=n\delta_t$ for $n=0,1,...,N_t$, where $N_t$ is any positive integer. For any function $z$ define on $\Omega_h$, we denote $z_i=z(x_i)$ and define the following difference operators 
\[ \nabla_h^{+}z_i=\frac{z_{i+1}-z_i}{h},\; \nabla_h^{-}z_i=\frac{z_{i}-z_{i-1}}{h},\; \Delta_h=\nabla_h^{+}\nabla_h^{-}=\nabla_h^{-}\nabla_h^{+},\; \Delta_h^2=\Delta_h\Delta_h.\]
Now we define the discrete $L^2$-inner product as 
$(y, z)_h=\sum\limits_{i=1}^{N} y_i z_i$ for any function $y, z$,  defined on $\Omega_h$. Then the corresponding discrete $L^2$-norm is given by $\| z \|_h=\sqrt{(z, z)_h}$. We also define discrete $H^1$-seminorm, $H^2$-seminorm and maximum-norm for any grid function $z$ in $\Omega_h$ as 
\[
\vert z\vert_{1,h}^2= h \sum\limits_{i=1}^{N} (\nabla_h^{-}z_i)^2, \;  \vert z\vert_{2,h}^2= h \sum\limits_{i=1}^{N} (\Delta_h z_i)^2, \; \| z \|_{\infty, h}= \sup\limits_i \vert z_i \vert. 
\]
Let $Y_i^n = Y(x_i, t_n), P_i^n = P(x_i, t_n)$ and $U_i^n=U(x_i, t_n)$ for $i=1,2,...,N$ and $n=0,1,...,N_t$. Then to get the approximate solution of \eqref{CH_state}-\eqref{CH_adjoint} we discretize the state equation \eqref{CH_state} as
\begin{equation}\label{discrete_state}
\frac{Y_i^{n+1}-Y_i^n}{\delta_t}=\Delta_h f(Y_i^{n+1})-\epsilon^2 \Delta_h^2Y_i^{n+1} + \frac{1}{\lambda}P_i^{n},
\end{equation} 
and the adjoint equation \eqref{CH_adjoint} as
\begin{equation}\label{discrete_adjoint}
-\frac{P_i^{n+1}-P_i^n}{\delta_t}=f'(Y_i^{n})\Delta_h P_i^n -\epsilon^2 \Delta_h^2P_i^{n} + \widehat{Y}_i^{n+1}-Y_i^{n+1}.
\end{equation} 
Observe that we made a substitution of control variable $U_i^n$ by $\frac{1}{\lambda}P_i^{n}$ in \eqref{discrete_state} using the relation in  \eqref{CH_optimality}. The described scheme  \eqref{discrete_state} - \eqref{discrete_adjoint} is undoubtedly nonlinear, with the nonlinearity originating from \eqref{discrete_state}.  We call the scheme \eqref{discrete_state} - \eqref{discrete_adjoint} as $\textbf{S}1$. Next we display two possible linear approximation to the state equation \eqref{CH_state}. First we consider the following linear scheme for the state equation 
\begin{equation}\label{discrete_state2}
\frac{Y_i^{n+1}-Y_i^n}{\delta_t}=\Delta_h ((Y_i^{n})^2 Y_i^{n+1}) -\Delta_h Y_i^n-\epsilon^2 \Delta_h^2Y_i^{n+1} + \frac{1}{\lambda}P_i^{n}.
\end{equation} 
We call the state approximation scheme \eqref{discrete_state2}, along with adjoint approximation scheme \eqref{discrete_adjoint} as $\textbf{S}2$. Another way to discretize the state equation is 
\begin{equation}\label{discrete_state3}
\frac{Y_i^{n+1}-Y_i^n}{\delta_t}=\Delta_h (Y_i^{n})^3  -3\Delta_h Y_i^n + 2\Delta_h Y_i^{n+1}-\epsilon^2 \Delta_h^2Y_i^{n+1} + \frac{1}{\lambda}P_i^{n}.
\end{equation} 
We call the state approximation scheme \eqref{discrete_state3}, along with adjoint approximation scheme \eqref{discrete_adjoint} as $\textbf{S}3$.
\section{Convergence of the Schemes}\label{Section3}
We examine the convergence of the finite difference scheme \eqref{discrete_state}-\eqref{discrete_adjoint} in this section. Discrete Gronwall's inequality, which will be used extensively in our convergence estimations, was previously discussed in \cite{heywood1990finite}.
\begin{lemma}[Discrete Gronwall inequality]
Let $\tau, Q$ and $a_n, b_n, c_n, d_n$ be non-negative numbers for integers $n\geq 0$ such that
\[
a_n + \tau\sum\limits_{k=0}^n b_k \leq \tau\sum\limits_{k=0}^n d_k a_k + \tau\sum\limits_{k=0}^n c_k + Q.
\]
Suppose that $\tau d_k<1, \forall k$ and set $\sigma_k=1/(1-\tau d_k)$. Then the following holds for $n\geq 0$
\[
a_n + \tau\sum\limits_{k=0}^n b_k \leq \exp \left( \tau\sum\limits_{k=1}^n \sigma_k d_k\right) \left[ \tau\sum\limits_{k=0}^n c_k +Q \right].
\]
\end{lemma}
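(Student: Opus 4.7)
The plan is to prove this by a two-stage argument: first extract a clean recursion for the combined quantity $\Psi_n := a_n + \tau\sum_{k=0}^n b_k$, and then iterate that recursion to pick up the exponential factor. The assumption $\tau d_n < 1$ is the whole reason the $\sigma_n$'s appear, so the natural move is to separate the $k=n$ summand from $\tau\sum_{k=0}^n d_k a_k$ and absorb it on the left-hand side.

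Concretely, I would begin by writing the hypothesis as
\[
(1-\tau d_n)\,a_n + \tau\sum_{k=0}^n b_k \;\leq\; \tau\sum_{k=0}^{n-1} d_k a_k + \tau\sum_{k=0}^n c_k + Q,
\]
then multiply through by $\sigma_n = 1/(1-\tau d_n)\geq 1$ and drop the $\sigma_n$ in front of $\tau\sum_{k=0}^n b_k$ (allowed since $\sigma_n \geq 1$ and $b_k\geq 0$) to obtain
\[
\Psi_n \;\leq\; \sigma_n\,\tau\sum_{k=0}^{n-1} d_k\,a_k + \sigma_n R_n, \qquad R_n := \tau\sum_{k=0}^n c_k + Q.
\]
Using $a_k\leq \Psi_k$ this becomes a recursion in $\Psi_n$ alone. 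Since $R_n$ is monotone non-decreasing (as $c_k\geq 0$), I can normalize by setting $\phi_n = \Psi_n/R_n$ and use $R_k\leq R_n$ to get $\phi_n \leq \sigma_n + \sigma_n\tau\sum_{k=0}^{n-1} d_k \phi_k$.

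For the second stage I would introduce the partial sum $S_n = 1 + \tau\sum_{k=0}^{n-1} d_k \phi_k$, so that $\phi_n \leq \sigma_n S_n$ and consequently $S_{n+1}-S_n = \tau d_n \phi_n \leq \tau\sigma_n d_n\, S_n$, i.e.\ $S_{n+1}\leq (1+\tau\sigma_n d_n) S_n$. Telescoping and using the elementary inequality $1+x\leq e^x$ converts the product into $\exp\bigl(\tau\sum_{k=0}^{n-1}\sigma_k d_k\bigr)$, giving
\[
\phi_n \;\leq\; \sigma_n \exp\!\Bigl(\tau\sum_{k=0}^{n-1}\sigma_k d_k\Bigr).
\]
The last cosmetic step is to fold the leading $\sigma_n$ into the exponent by noting $\sigma_n = 1+\tau\sigma_n d_n \leq \exp(\tau\sigma_n d_n)$, which completes the bound $\Psi_n \leq R_n \exp\bigl(\tau\sum_{k=0}^n \sigma_k d_k\bigr)$.

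The step I expect to be the main obstacle, or at least the only nontrivial one, is the conversion of the multiplicative recursion $S_{n+1}\leq(1+\tau\sigma_n d_n)S_n$ into the exponential bound, and the careful handling of the boundary term $\sigma_n$ using the identity $\sigma_n - 1 = \tau\sigma_n d_n$ together with $1+x\leq e^x$; everything else is bookkeeping. I note in passing that the index in the exponent of the stated conclusion begins at $k=1$, whereas the argument above most naturally produces $k=0$; either the $k=0$ term is being tacitly assumed to vanish (e.g.\ $d_0=0$), or the difference is a harmless typographical matter, since $\exp(\tau\sigma_0 d_0)\geq 1$ only strengthens the right-hand side.
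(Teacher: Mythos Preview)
Your argument is correct and is essentially the standard proof found in Heywood--Rannacher (1990), which is precisely what the paper invokes: the paper does not supply its own proof of this lemma but simply cites that reference. The one substantive observation you make --- that your derivation naturally produces $\exp\bigl(\tau\sum_{k=0}^n\sigma_k d_k\bigr)$ rather than the sum starting at $k=1$ --- is well taken; the version with $k\geq 1$ as stated in the paper is in fact the slightly sharper bound, and in Heywood--Rannacher it arises because one may alternatively absorb only the terms $k\geq 1$ into the exponential while treating the $k=0$ contribution separately (or, as you say, it may simply be a transcription slip, harmless in all applications here).
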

\begin{theorem}[Convergence of the scheme $\textbf{S}1$]\label{thm1_ocp}
Let $y(x,t)$ and $p(x,t)$ be sufficiently smooth functions. For $\delta_t$ sufficiently small, the finite difference scheme \eqref{discrete_state}-\eqref{discrete_adjoint} is first order in time and second order in space convergent, i.e., 
$$  \max\limits_n \left\{\parallel  y^n-Y^n \parallel_{\infty,h} + \parallel p^n-P^n \parallel_{\infty, h} \right\}  \leq C ( \delta_t + h^2 ).$$
\end{theorem}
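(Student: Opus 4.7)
The plan is to estimate the errors $e_y^n := y^n - Y^n$ and $e_p^n := p^n - P^n$ on the grid using coupled discrete energy inequalities together with the discrete Gronwall lemma just stated. First I would insert the smooth exact solutions into \eqref{discrete_state}--\eqref{discrete_adjoint} and Taylor expand in time and space; the backward Euler style time stepping together with the standard second--order stencils $\Delta_h$ and $\Delta_h^2$ produce truncation errors of order $O(\delta_t + h^2)$ for both equations. Subtracting the scheme from the truncated exact relations yields two coupled linear error equations whose forcing terms consist of the truncation errors, the nonlinear discrepancy $\Delta_h\bigl(f(y^{n+1}) - f(Y^{n+1})\bigr)$ in the state equation, the nonlinear discrepancy $f'(y^n)\Delta_h p^n - f'(Y^n)\Delta_h P^n$ in the adjoint equation, and the cross--coupling $\tfrac{1}{\lambda}\,e_p^n$ appearing in the state and $-e_y^{n+1}$ appearing in the adjoint.

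Next I would derive energy estimates by taking the discrete $L^2$ inner product of the state error equation with $e_y^{n+1}$ and of the adjoint error equation with $e_p^n$. Summation by parts together with the discrete homogeneous Neumann data gives $(-\Delta_h^2 v, v)_h = -|v|_{2,h}^2$ and $(\Delta_h w, v)_h = -(\nabla_h^- w, \nabla_h^- v)_h$, furnishing the dissipative terms $\epsilon^2|e_y^{n+1}|_{2,h}^2$ and $\epsilon^2|e_p^n|_{2,h}^2$. For the state nonlinearity I write $f(y^{n+1}) - f(Y^{n+1}) = f'(\xi^{n+1})\,e_y^{n+1}$ with $|f'|\le M$ from \eqref{Lipchitz_CH}, and apply Cauchy--Schwarz and Young's inequality to absorb the gradient of this product into the $|e_y^{n+1}|_{2,h}^2$ dissipation at the cost of a constant multiple of $\|e_y^{n+1}\|_h^2$. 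For the adjoint I split
\[
f'(y^n)\Delta_h p^n - f'(Y^n)\Delta_h P^n = f'(Y^n)\,\Delta_h e_p^n + \bigl(f'(y^n)-f'(Y^n)\bigr)\Delta_h p^n,
\]
control the first piece by the same Young trick, and control the second using the smoothness of $p$ together with a Lipschitz bound on $f'$ on a tube containing the numerical solution.

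The main obstacle, as I see it, is closing the coupling $(\tfrac{1}{\lambda}e_p^n, e_y^{n+1})_h$ and $-(e_y^{n+1}, e_p^n)_h$ into a single Gronwall-ready inequality. I would add the two energy estimates after weighting the adjoint inequality by $1/\lambda$, so that the coupling terms appear with opposite signs and can be pooled with Young's inequality into contributions of the form $\|e_y^{n+1}\|_h^2 + \|e_p^n\|_h^2$ already present on the right. This yields, for $\delta_t$ small enough,
\[
\|e_y^{n+1}\|_h^2 + \tfrac{1}{\lambda}\|e_p^n\|_h^2 + \epsilon^2\delta_t\!\sum_{k=0}^{n+1}\!\bigl(|e_y^k|_{2,h}^2 + |e_p^k|_{2,h}^2\bigr) \le C\delta_t\!\sum_{k=0}^{n+1}\!\bigl(\|e_y^k\|_h^2 + \|e_p^k\|_h^2\bigr) + C(\delta_t+h^2)^2.
\]

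Finally I would apply the discrete Gronwall lemma (whose hypothesis $\tau d_k < 1$ holds because $\delta_t$ is small) to conclude $\max_n(\|e_y^n\|_h + \|e_p^n\|_h) \le C(\delta_t+h^2)$, and then upgrade to the $\|\cdot\|_{\infty,h}$ norm via the one--dimensional discrete Sobolev embedding $\|v\|_{\infty,h}\le C(\|v\|_h + |v|_{1,h})$ combined with the interpolation $|v|_{1,h}^2\le \|v\|_h\,|v|_{2,h}$, the latter $H^2$--seminorm being already controlled by the energy estimate. A subtlety that I would address by a bootstrap/induction argument is that the Lipschitz and pointwise bounds on $f'$, $f''$ should be applied along $Y^n$ rather than $y^n$; this is legitimate provided $Y^n$ stays in a fixed neighbourhood of $y^n$ in $L^\infty$, which is ensured inductively by the error bound itself once $\delta_t$ and $h$ are small.
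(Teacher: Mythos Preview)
Your overall strategy---truncation, error equations, $L^2$ energy estimates via inner products with $e_y^{n+1}$ and $e_p^n$, Gronwall, then discrete Sobolev embedding---matches the paper's, and your splitting of the adjoint nonlinearity into $f'(Y^n)\Delta_h e_p^n + (f'(y^n)-f'(Y^n))\Delta_h p^n$ is arguably cleaner than the estimate the paper writes down. However, the final step of upgrading to $\|\cdot\|_{\infty,h}$ does not close as you describe.

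The energy inequality you display only yields
\[
\epsilon^2\,\delta_t\sum_{k=0}^{n}\bigl(|e_y^k|_{2,h}^2+|e_p^k|_{2,h}^2\bigr)\ \le\ C(\delta_t+h^2)^2,
\]
i.e.\ control of the $H^2$-seminorm in $\ell^2$-in-time, not in $\ell^\infty$-in-time. The interpolation $|v|_{1,h}^2\le\|v\|_h\,|v|_{2,h}$ then gives, at a fixed $n$, only $|e_y^n|_{1,h}^2\le C(\delta_t+h^2)\,|e_y^n|_{2,h}$, and the $\ell^2$-in-time bound permits $|e_y^n|_{2,h}$ as large as $C(\delta_t+h^2)/\sqrt{\delta_t}$ at a single time level. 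You therefore cannot conclude $\max_n|e_y^n|_{1,h}\le C(\delta_t+h^2)$, which is what the discrete Sobolev embedding requires for the uniform-in-$n$ bound stated in the theorem.

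The paper closes this gap with a \emph{second} round of estimates: after establishing $\|e^n\|_h+\|\mathfrak{e}^n\|_h\le C(\delta_t+h^2)$, it tests the state error equation against $\partial_t e^n:=(e^{n+1}-e^n)/\delta_t$ (and the adjoint against $-\partial_t\mathfrak{e}^n$). The biharmonic term then produces the telescoping quantity $|e^{n+1}|_{2,h}^2-|e^n|_{2,h}^2$ on the left, and a further Gronwall argument gives the \emph{pointwise-in-$n$} bound $|e^n|_{2,h}+|\mathfrak{e}^n|_{2,h}\le C(\delta_t+h^2)$. Only then do the interpolation and Sobolev embedding deliver the $L^\infty$ estimate. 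You should insert this second testing step into your plan.
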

\begin{proof}
Using Taylor expansion we observe that the exact solution $y_i^n=y(x_i, t_n)$ and $p_i^n=p(x_i, t_n)$ satisfy the following equations
\begin{equation}\label{taylor_state}
\frac{y_i^{n+1}-y_i^n}{\delta_t}=\Delta_h f(y_i^{n+1})-\epsilon^2 \Delta_h^2y_i^{n+1} + \frac{1}{\lambda}p_i^{n} + F_i^n,
\end{equation} 
\begin{equation}\label{taylor_adjoint}
-\frac{p_i^{n+1}-p_i^n}{\delta_t}=f'(y_i^{n})\Delta_h p_i^n -\epsilon^2 \Delta_h^2p_i^{n} + \widehat{y}_i^{n+1}-y_i^{n+1} + G_i^n,
\end{equation} 
where $F_i^n$ and $G_i^n$ denote the truncation error, which satisfy the following for some positive constants $c_1, c_2$
 \begin{equation}\label{truncation_err}
\max\limits_{i,n} \vert F_i^n\vert \leq c_1 ( \delta_t + h^2 ),\;\; \max\limits_{i,n} \vert G_i^n\vert \leq c_2 ( \delta_t + h^2 ).  
 \end{equation}
Let us define the error $e_i^n=y_i^n-Y_i^n$ and $\mathfrak{e}_i^n=p_i^n-P_i^n$. Then taking difference between \eqref{taylor_state}-\eqref{taylor_adjoint} and \eqref{discrete_state}-\eqref{discrete_adjoint} yields
\begin{equation}\label{err_state}
\frac{e_i^{n+1}-e_i^n}{\delta_t}=\Delta_h \left(f(y_i^{n+1})-f(Y_i^{n+1})\right)-\epsilon^2 \Delta_h^2e_i^{n+1} + \frac{1}{\lambda}\mathfrak{e}_i^{n} + F_i^n,
\end{equation} 
\begin{equation}\label{err_adjoint}
-\frac{\mathfrak{e}_i^{n+1}-\mathfrak{e}_i^n}{\delta_t}=f'(y_i^{n})\Delta_h p_i^n -f'(Y_i^{n})\Delta_h P_i^n -\epsilon^2 \Delta_h^2\mathfrak{e}_i^{n} -e_i^{n+1} + G_i^n.
\end{equation} 
Taking the inner product of \eqref{err_state} and $e^{n+1}$ yields
\begin{equation}\label{est_state1}
\left(\frac{e^{n+1}-e^n}{\delta_t}, e^{n+1}\right)_h=\left(\Delta_h \left(f(y^{n+1})-f(Y^{n+1})\right), e^{n+1}\right)_h-\epsilon^2 \vert e^{n+1}\vert_{2,h}^2 + \frac{1}{\lambda}\left(\mathfrak{e}^{n}, e^{n+1}\right)_h + \left(F^n, e^{n+1}\right)_h.
\end{equation}
Using the differentiability of $f$, Cauchy-Schwarz inequality and $\left(\frac{e^{n+1}-e^n}{\delta_t}, e^{n+1}\right)_h \geq \frac{\| e^{n+1}\|_h^2 - \| e^{n}\|_h^2}{2\delta_t}$ on \eqref{est_state1} we obtain
\begin{equation}\label{est_state2}
\frac{\| e^{n+1}\|_h^2 - \|e^{n}\|_h^2}{2\delta_t} \leq M \| e^{n+1}\|_h \vert e^{n+1}\vert_{2,h}-\epsilon^2 \vert e^{n+1}\vert_{2,h}^2 + \frac{1}{\lambda}\| \mathfrak{e}^{n}\|_h \|e^{n+1}\|_h + \| F^{n}\|_h\| e^{n+1}\|_h.
\end{equation}
Using Young's inequality and \eqref{truncation_err} on \eqref{est_state2} we get
\begin{equation}\label{est_state3}
\frac{\| e^{n+1}\|_h^2 - \| e^{n}\|_h^2}{2\delta_t} \leq \underbrace{\left(\frac{M^2}{4\epsilon^2} + \frac{1}{2\lambda} + \frac{1}{2} \right)}_{\alpha} \| e^{n+1}\|_h^2 + \frac{1}{2\lambda}\| \mathfrak{e}^{n}\|_h^2 + \frac{c_1}{2} ( \delta_t + h^2 )^2.
\end{equation}
From \eqref{est_state3} we have 
\begin{equation}\label{est_state4}
\left( 1-2\delta_t\alpha\right) \parallel e^{n+1}\parallel_h^2 \leq \parallel e^{n}\parallel_h^2 + \frac{\delta_t}{\lambda}\parallel \mathfrak{e}^{n}\parallel_h^2 + c_1\delta_t ( \delta_t + h^2 )^2.
\end{equation}
For $\delta_t<\frac{1}{2\alpha}$ in \eqref{est_state4} and taking the sum over $n$ we have 
\begin{equation}\label{est_state5}
\parallel e^{n}\parallel_h^2  \leq \frac{1}{\left( 1-2\delta_t\alpha\right)} \sum\limits_{i=1}^{n} \parallel e^{i}\parallel_h^2 +  \frac{\delta_t}{\lambda\left( 1-2\delta_t\alpha\right)}\sum\limits_{i=0}^{n-1} \parallel \mathfrak{e}^{i}\parallel_h^2 + \frac{c_1T}{\left( 1-2\delta_t\alpha\right)}( \delta_t + h^2 )^2.
\end{equation}
An application of Gronwall's inequality for sufficiently small $\delta_t$ yields
\begin{equation}\label{est_state6}
\parallel e^{n}\parallel_h^2  \leq   C \delta_t\sum\limits_{i=0}^{n-1} \parallel \mathfrak{e}^{i}\parallel_h^2 + C ( \delta_t + h^2 )^2.
\end{equation}
Taking now the inner product of \eqref{err_adjoint} and $\mathfrak{e}^{n}$ yields
\begin{equation}\label{est_adjoint1}
\left(-\frac{\mathfrak{e}^{n+1}-\mathfrak{e}^n}{\delta_t}, \mathfrak{e}^{n}\right)_h=\left(f'(y^{n})\Delta_h p^n -f'(Y^{n})\Delta_h P^n, \mathfrak{e}^{n} \right)_h -\epsilon^2 \vert \mathfrak{e}^{n+1}\vert_{2,h}^2 -\left(e^{n+1}, \mathfrak{e}^{n}\right)_h + \left(G^n, \mathfrak{e}^{n}\right)_h.
\end{equation}
Note that 
\begin{equation}\label{nonlin_1}
\left(f'(y^{n})\Delta_h p^n -f'(Y^{n})\Delta_h P^n, \mathfrak{e}^{n} \right)_h \leq \left( \vert \left(f'(y^{n}) +f'(Y^{n})\right) \vert \,\vert \Delta_h \mathfrak{e}^n \vert, \mathfrak{e}^{n} \right)_h \leq 2M \| \mathfrak{e}^n\|_h \vert \mathfrak{e}^{n+1}\vert_{2,h}.
\end{equation}
Using \eqref{nonlin_1} and $\left(-\frac{\mathfrak{e}^{n+1}-\mathfrak{e}^n}{\delta_t}, \mathfrak{e}^{n}\right)_h \geq \frac{\parallel \mathfrak{e}^{n}\parallel_h^2 - \parallel\mathfrak{ e}^{n+1}\parallel_h^2}{2\delta_t}$ on \eqref{est_adjoint1} we obtain 
\begin{equation}\label{est_adjoint2}
\frac{\parallel \mathfrak{e}^{n}\parallel_h^2 - \parallel\mathfrak{ e}^{n+1}\parallel_h^2}{2\delta_t} \leq 2M \parallel \mathfrak{e}^n\parallel_h \vert \mathfrak{e}^{n+1}\vert_{2,h} -\epsilon^2 \vert \mathfrak{e}^{n+1}\vert_{2,h}^2 -\left(e^{n+1}, \mathfrak{e}^{n}\right)_h + \left(G^n, \mathfrak{e}^{n}\right)_h.
\end{equation}
Using Young's inequality and \eqref{truncation_err} we get 
\begin{equation}\label{est_adjoint3}
\frac{\parallel \mathfrak{e}^{n}\parallel_h^2 - \parallel\mathfrak{ e}^{n+1}\parallel_h^2}{2\delta_t} \leq \underbrace{\left(\frac{M^2}{\epsilon^2} +1\right)}_{\beta} \parallel \mathfrak{e}^n\parallel_h^2 + \frac{1}{2}\parallel {e}^{n+1}\parallel_h^2 + \frac{c_2}{2} ( \delta_t + h^2 )^2.
\end{equation}
Thus we obtain
\begin{equation}\label{est_adjoint4}
(1-2\delta_t\beta)\parallel \mathfrak{e}^{n}\parallel_h^2 \leq  \parallel\mathfrak{ e}^{n+1}\parallel_h^2 +  \delta_t\parallel {e}^{n+1}\parallel_h^2 + c_2\delta_t ( \delta_t + h^2 )^2.
\end{equation}
For $\delta_t<\frac{1}{2\beta}$ in \eqref{est_adjoint4} and taking the sum over $n$ we have
\begin{equation}\label{est_adjoint5}
\parallel \mathfrak{e}^{n}\parallel_h^2 \leq \frac{1}{(1-2\delta_t\beta)}\sum\limits_{i=0}^{n-1} \parallel\mathfrak{ e}^{i}\parallel_h^2 +  \frac{\delta_t}{(1-2\delta_t\beta)} \sum\limits_{i=1}^{n}\parallel {e}^{i}\parallel_h^2 + \frac{c_2T}{(1-2\delta_t\beta)} ( \delta_t + h^2 )^2.
\end{equation}
Applying Gronwall's inequality on \eqref{est_adjoint5} for sufficiently small $\delta_t$ we obtain
\begin{equation}\label{est_adjoint6}
\parallel \mathfrak{e}^{n}\parallel_h^2  \leq   C \delta_t\sum\limits_{i=1}^{n} \parallel e^{i}\parallel_h^2 + C ( \delta_t + h^2 )^2.
\end{equation}
By adding \eqref{est_state6} and \eqref{est_adjoint6} we have 
\begin{equation}\label{est_1}
\parallel e^{n}\parallel_h^2 + \parallel \mathfrak{e}^{n}\parallel_h^2  \leq   C \delta_t\sum\limits_{i=1}^{n} \parallel e^{i}\parallel_h^2 + C \delta_t\sum\limits_{i=0}^{n-1} \parallel \mathfrak{e}^{i}\parallel_h^2 + C ( \delta_t + h^2 )^2.
\end{equation}
Again using Gronwall's inequality on \eqref{est_1} for sufficiently small $\delta_t$ we get
\begin{equation}\label{est_2}
\parallel e^{n}\parallel_h + \parallel \mathfrak{e}^{n}\parallel_h  \leq    C ( \delta_t + h^2 ).
\end{equation}
Next we take the inner product of \eqref{err_state} and $\partial_t e^n:=\frac{e^{n+1}-e^n}{\delta_t}$, which yields
\begin{equation}\label{est_state11}
\parallel \partial_t e^n\parallel^2_h=\left(\Delta_h \left(f(y^{n+1})-f(Y^{n+1})\right), \partial_t e^n\right)_h-\epsilon^2 \left(\Delta_h e^{n+1}, \partial_t( \Delta_he^n)\right)_h + \frac{1}{\lambda}\left(\mathfrak{e}^{n}, \partial_te^{n}\right)_h + \left(F^n, \partial_te^{n}\right)_h.
\end{equation}
Using the Lipschitz condition of $f$ and Cauchy-Schwarz inequality on \eqref{est_state11} we get
\begin{multline}\label{est_state12}
\parallel \partial_t e^n\parallel^2_h + \epsilon^2 \left(\Delta_h e^{n+1}, \partial_t( \Delta_he^n)\right)_h \leq M \parallel e^{n+1}\parallel_h \parallel \partial_t( \Delta_he^n)\parallel_h \\
+ \frac{1}{\lambda}\parallel\mathfrak{e}^{n}\parallel_h \parallel\partial_te^{n}\parallel_h + \parallel F^n\parallel_h \parallel\partial_te^{n}\parallel_h.
\end{multline}
Using Young's inequality and $\left(\partial_t( \Delta_he^n), \Delta_h e^{n+1}\right)_h \geq \frac{\vert e^{n+1}\vert_{2,h}^2 - \vert e^{n}\vert_{2,h}^2}{2\delta_t}$ on \eqref{est_state12} we have the following
\begin{equation}\label{est_state13}
\epsilon^2 \frac{\vert e^{n+1}\vert_{2,h}^2 - \vert e^{n}\vert_{2,h}^2}{2\delta_t} \leq \frac{M^2}{\epsilon^2} \parallel e^{n+1}\parallel_h^2 + \frac{\epsilon^2}{4} \parallel \partial_t( \Delta_he^n)\parallel_h^2 + \frac{1}{2\lambda^2}\parallel\mathfrak{e}^{n}\parallel_h^2 + \frac{1}{2}\parallel F^n\parallel_h^2.
\end{equation}
Using \eqref{est_2} and \eqref{truncation_err} on \eqref{est_state13} we have
\begin{equation}\label{est_state14}
\epsilon^2 \frac{\vert e^{n+1}\vert_{2,h}^2 - \vert e^{n}\vert_{2,h}^2}{2\delta_t} \leq  \frac{\epsilon^2}{4} \frac{\vert e^{n+1}\vert_{2,h}^2 + \vert e^{n}\vert_{2,h}^2}{\delta_t} +C(\delta_t + h^2)^2.
\end{equation}
Thus the equation \eqref{est_state14} implies
\begin{equation}\label{est_state15}
\frac{\epsilon^2}{2} \vert e^{n+1}\vert_{2,h}^2  \leq  \frac{3\epsilon^2}{2}  \vert e^{n}\vert_{2,h}^2 +2\delta_tC(\delta_t + h^2)^2.
\end{equation}
Taking the sum over $n$ on \eqref{est_state15} we find that
\begin{equation}\label{est_state16}
\frac{\epsilon^2}{2} \vert e^{n}\vert_{2,h}^2  \leq  \frac{3\epsilon^2}{2}  \sum\limits_{i=1}^{n}\vert e^{i}\vert_{2,h}^2 +2TC(\delta_t + h^2)^2.
\end{equation}
Applying Gronwall's inequality on \eqref{est_state16} we get
\begin{equation}\label{est_3}
 \vert e^{n}\vert_{2,h}  \leq  C(\delta_t + h^2).
\end{equation}
Also we have 
$\vert e^n\vert_{1,h}^2=-(\Delta_h e^n, e^n)\leq \vert e^n\vert_{2,h}\parallel e^n\parallel_h$. Then using \eqref{est_3} and \eqref{est_2} we obtain
\begin{equation}\label{est_4}
 \vert e^{n}\vert_{1,h}  \leq  C(\delta_t + h^2).
\end{equation}
Now we take the inner product of \eqref{err_adjoint} and $-\partial_t \mathfrak{e}^n$, that yields
\begin{multline}\label{est_adjoint11}
\parallel\partial_t \mathfrak{e}^n\parallel_h^2=\left(f'(y^{n})\Delta_h p^n -f'(Y^{n})\Delta_h P^n, -\partial_t \mathfrak{e}^n \right)_h -\epsilon^2 \left(\Delta_h \mathfrak{e}^{n+1}, -\partial_t( \Delta_h \mathfrak{e}^n)\right)_h\\
 -\left(e^{n+1}, -\partial_t \mathfrak{e}^n\right)_h + \left(G^n, -\partial_t \mathfrak{e}^n\right)_h.
\end{multline}
Following \eqref{nonlin_1} and using Cauchy-Schwarz inequality on \eqref{est_adjoint11} we have 
\begin{equation}\label{est_adjoint12}
\parallel\partial_t \mathfrak{e}^n\parallel_h^2 + \epsilon^2 \left(\Delta_h \mathfrak{e}^{n+1}, -\partial_t( \Delta_h \mathfrak{e}^n)\right)_h \leq 2M \vert \mathfrak{e}^n \vert_{2, h} \parallel\partial_t \mathfrak{e}^n\parallel_h + \parallel e^{n+1}\parallel_h \parallel\partial_t \mathfrak{e}^n\parallel_h + \parallel G^n\parallel_h \parallel\partial_t \mathfrak{e}^n\parallel_h.
\end{equation}
Using Young's inequality on \eqref{est_adjoint12} we get
\begin{equation}\label{est_adjoint13}
 \epsilon^2 \frac{\vert \mathfrak{e}^{n}\vert_{2,h}^2 - \vert \mathfrak{e}^{n+1}\vert_{2,h}^2}{2\delta_t} \leq 4M^2 \vert \mathfrak{e}^n \vert_{2, h}^2 + \frac{1}{2}\parallel e^{n+1}\parallel_h^2 + \parallel G^n\parallel_h^2.
\end{equation}
Using \eqref{truncation_err} and \eqref{est_2} on \eqref{est_adjoint13} we have
\begin{equation}\label{est_adjoint14}
 (\epsilon^2-8M^2\delta_t) \vert \mathfrak{e}^{n}\vert_{2,h}^2 \leq \epsilon^2 \vert \mathfrak{e}^{n+1} \vert_{2, h}^2 + 2\delta_t C(\delta_t + h^2)^2.
\end{equation}
Letting $\delta_t<\frac{\epsilon^2}{8M^2}$ in \eqref{est_adjoint14} and summing over $n$ yields
\begin{equation}\label{est_adjoint15}
\vert \mathfrak{e}^{n}\vert_{2,h}^2 \leq \frac{\epsilon^2}{ (\epsilon^2-8M^2\delta_t) } \sum\limits_{i=0}^{n-1}\vert \mathfrak{e}^{i} \vert_{2, h}^2 + 2T C(\delta_t + h^2)^2.
\end{equation}
Employing Gronwall's inequality in \eqref{est_adjoint15} we obtain
\begin{equation}\label{est_5}
\vert \mathfrak{e}^{n}\vert_{2,h} \leq C(\delta_t + h^2),
\end{equation}
and from \eqref{est_5}  and \eqref{est_2} we have
\begin{equation}\label{est_6}
\vert \mathfrak{e}^{n}\vert_{1,h} \leq C(\delta_t + h^2).
\end{equation}
Then using \eqref{est_2}, \eqref{est_4} and \eqref{est_6} in discrete Sobolev's embedding theorem, we derive that 
\begin{equation*}
\parallel {e}^{n}\parallel_{\infty,h} + \parallel\mathfrak{e}^{n}\parallel_{\infty,h} \leq C(\delta_t + h^2).
\end{equation*}
Hence the theorem.
\end{proof}
\begin{theorem}[Convergence of the scheme $\textbf{S}2$]\label{thm2_ocp}
Let $y(x,t)$ and $p(x,t)$ be sufficiently smooth functions. For $\delta_t$ sufficiently small, the finite difference scheme \eqref{discrete_state2}-\eqref{discrete_adjoint} is first order in time and second order in space convergent, i.e., 
$$  \max\limits_n \left\{\parallel  y^n-Y^n \parallel_{\infty,h} + \parallel p^n-P^n \parallel_{\infty, h} \right\}  \leq C ( \delta_t + h^2 ).$$
\end{theorem}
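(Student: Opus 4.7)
The plan is to mirror the proof of Theorem \ref{thm1_ocp} almost verbatim; the only substantive difference is the treatment of the semi-linearized cubic term $\Delta_h((Y^n)^2 Y^{n+1})$ in the state discretization \eqref{discrete_state2}. Because the adjoint scheme \eqref{discrete_adjoint} is identical to the one used in $\textbf{S}1$, every adjoint estimate obtained in the proof of Theorem \ref{thm1_ocp} — in particular \eqref{est_adjoint6}, \eqref{est_5} and \eqref{est_6} — will transfer verbatim, once a state $L^2$ bound of the same order has been established and coupled in.

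First I would write the consistency equation obtained by inserting the exact solution into \eqref{discrete_state2}. The additional ingredients in the truncation error $\widetilde F^n$ are the linearization error
\begin{equation*}
(y_i^{n+1})^3-(y_i^n)^2 y_i^{n+1}=y_i^{n+1}\bigl(y_i^{n+1}+y_i^n\bigr)\bigl(y_i^{n+1}-y_i^n\bigr)
\end{equation*}
and the first-order time shift $-\Delta_h y^{n+1}\leftrightarrow -\Delta_h y^n$; under smoothness of $y$ and the uniform bound on $y$ implied by \eqref{Lipchitz_CH} (via $\vert\widetilde f'(y)\vert=3y^2\leq\widetilde M$), both contribute $O(\delta_t)$, so $\max_{i,n}\vert\widetilde F_i^n\vert\leq c_1(\delta_t+h^2)$ in analogy with \eqref{truncation_err}.

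Subtracting the scheme from the consistency equation and exploiting the algebraic identity
\begin{equation*}
(y^n)^2 y^{n+1}-(Y^n)^2 Y^{n+1}=(y^n)^2\,e^{n+1}+Y^{n+1}(y^n+Y^n)\,e^n,
\end{equation*}
I would take the inner product of the resulting state error equation with $e^{n+1}$, apply summation by parts to move $\Delta_h$ onto $e^{n+1}$, and use Cauchy--Schwarz together with the uniform bounds on $(y^n)^2$ and on $Y^{n+1}(y^n+Y^n)$ coming from \eqref{Lipchitz_CH}. After Young's inequality this produces the analogue of \eqref{est_state3} with one extra term of the form $\gamma\|e^n\|_h^2$ generated by the cross piece $Y^{n+1}(y^n+Y^n)e^n$ (and a similar contribution from the explicit $-\Delta_h e^n$ term present in \eqref{discrete_state2}). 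Summing over $n$, invoking the discrete Gronwall inequality for $\delta_t$ small, coupling with the unchanged adjoint estimate \eqref{est_adjoint6}, and running one more Gronwall as in \eqref{est_1}--\eqref{est_2} yields $\|e^n\|_h+\|\mathfrak{e}^n\|_h\leq C(\delta_t+h^2)$. The $H^2$-seminorm bounds for $e^n$ and $\mathfrak{e}^n$ then follow by testing with $\partial_t e^n$ and $-\partial_t\mathfrak{e}^n$, exactly as in \eqref{est_state11}--\eqref{est_6}, with the new nonlinear inner product handled by the same decomposition; discrete Sobolev embedding closes out the $L^\infty$ estimate.

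The main obstacle is the cross term $\Delta_h\bigl(Y^{n+1}(y^n+Y^n)e^n\bigr)$: it involves the previous time level, so after summation by parts it cannot be absorbed into the $-\epsilon^2\vert e^{n+1}\vert_{2,h}^2$ sitting on the same side of the inequality. It has to be split by Young's inequality into a small multiple of $\vert e^{n+1}\vert_{2,h}^2$ (which is absorbed) and a $\|e^n\|_h^2$ term that is then carried through the Gronwall step. Uniformity of the multiplicative constant in $n$ relies on the $L^\infty$ control of $Y^n$ built into \eqref{Lipchitz_CH}; without that boundedness the coefficient would depend on $Y$ itself and the argument would not close.
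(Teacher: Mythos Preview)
Your proposal is correct and follows the paper's proof essentially step for step: consistency equation, error equation tested against $e^{n+1}$ (and then $\partial_t e^n$), Young plus discrete Gronwall, verbatim reuse of the adjoint bounds \eqref{est_adjoint6}, \eqref{est_5}, \eqref{est_6} from Theorem~\ref{thm1_ocp}, and discrete Sobolev embedding to finish. The only cosmetic difference is in the off-level terms: the paper bounds the cubic difference in one stroke by $2\widetilde M\,\|e^{n+1}\|_h\,|e^{n+1}|_{2,h}$ and instead picks up an $\tfrac{\epsilon^2}{2}|e^n|_{2,h}^2$ from the explicit $-\Delta_h e^n$ term that telescopes away upon summation, whereas your explicit splitting $(y^n)^2 e^{n+1}+Y^{n+1}(y^n+Y^n)e^n$ produces a $\gamma\|e^n\|_h^2$ contribution that is simply absorbed into the Gronwall step---either route closes the estimate.
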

\begin{proof}
We provide an estimate that is consistent with the linear state approximation scheme \eqref{discrete_state2}, and for an estimate that is consistent with discrete adjoint, we turn to Theorem \ref{thm1_ocp}. Using Taylor expansion we can see that the exact solution $y_i^n=y(x_i, t_n)$ and $p_i^n=p(x_i, t_n)$ satisfy the following equations
\begin{equation}\label{taylor_state2}
\frac{y_i^{n+1}-y_i^n}{\delta_t}=\Delta_h (y_i^{n})^2y_i^{n+1} -\Delta_h y_i^n -\epsilon^2 \Delta_h^2y_i^{n+1} + \frac{1}{\lambda}p_i^{n} +\widetilde{ F}_i^n,
\end{equation}
where $\widetilde{F}_i^n$ denotes the truncation error, which satisfies the following for some positive constant $c_3$,
 \begin{equation}\label{truncation_err2}
\max\limits_{i,n} \vert \widetilde{F}_i^n\vert \leq c_3 ( \delta_t + h^2 ). 
 \end{equation}
Let us define the error $e_i^n=y_i^n-Y_i^n$ and $\mathfrak{e}_i^n=p_i^n-P_i^n$. Then taking the difference between \eqref{taylor_state2} and \eqref{discrete_state2} yields
\begin{equation}\label{err_state2}
\frac{e_i^{n+1}-e_i^n}{\delta_t}=\Delta_h \left((y_i^{n})^2y_i^{n+1}-(Y_i^{n})^2 Y_i^{n+1}\right)-\Delta_h e_i^n-\epsilon^2 \Delta_h^2e_i^{n+1} + \frac{1}{\lambda}\mathfrak{e}_i^{n} + \widetilde{ F}_i^n.
\end{equation} 
Taking the inner product of \eqref{err_state2} and $e^{n+1}$, we get
\begin{multline}\label{err_state21}
\left(\frac{e^{n+1}-e^n}{\delta_t}, e^{n+1}\right)_h=\left( \Delta_h \left((y^{n})^2y^{n+1}-(Y^{n})^2 Y^{n+1}\right), e^{n+1}\right) -\left(\Delta_h e^n, e^{n+1}\right)_h
\\ -\epsilon^2 \vert e^{n+1}\vert_{2,h}^2 + \frac{1}{\lambda}\left(\mathfrak{e}^{n}, e^{n+1}\right)_h + \left(\widetilde{F}^n, e^{n+1}\right)_h.
\end{multline}
Observe that
\[
\left( \Delta_h \left((y^{n})^2y^{n+1}-(Y^{n})^2 Y^{n+1}\right), e^{n+1}\right) \leq \left(  \vert \widetilde{f}'(y^n) + \widetilde{f}'(Y^n) \vert \vert e^{n+1}\vert, \Delta_h e^{n+1}\right) \leq 2\widetilde{M} \parallel e^{n+1}\parallel_h \vert e^{n+1}\vert_{2,h}. 
\]
Using the above estimate, Young's inequality and \eqref{truncation_err2} on \eqref{err_state21} we obtain 
\begin{multline}\label{err_state22}
\frac{\parallel e^{n+1}\parallel_h^2 - \parallel e^{n}\parallel_h^2}{2\delta_t} +\frac{\epsilon^2}{2} \vert e^{n+1}\vert_{2,h}^2 \leq \underbrace{\left(\frac{4\widetilde{M}^2}{\epsilon^2} 
+ \frac{1}{2\epsilon^2} + \frac{1}{2\lambda} + \frac{1}{2} \right)}_{\widetilde{\alpha}} \parallel e^{n+1}\parallel_h^2 \\ + \frac{\epsilon^2}{2} \vert e^{n}\vert_{2,h}^2 + \frac{1}{2\lambda}\parallel \mathfrak{e}^{n}\parallel_h^2 + \frac{c_3}{2} ( \delta_t + h^2 )^2.
\end{multline}
From \eqref{err_state22} we have 
\begin{equation}\label{err_state23}
\left( 1-2\delta_t\widetilde{\alpha}\right) \parallel e^{n+1}\parallel_h^2 + \epsilon^2 \delta_t\left(\vert e^{n+1}\vert_{2,h}^2 -\vert e^{n}\vert_{2,h}^2 \right)\leq \parallel e^{n}\parallel_h^2 + \frac{\delta_t}{\lambda}\parallel \mathfrak{e}^{n}\parallel_h^2 + c_3\delta_t ( \delta_t + h^2 )^2.
\end{equation}
For $\delta_t<\frac{1}{2\widetilde{\alpha}}$ in \eqref{err_state23} and taking the sum over $n$ we get
\begin{equation}\label{err_state24}
\parallel e^{n}\parallel_h^2  \leq \frac{1}{\left( 1-2\delta_t\widetilde{\alpha}\right)} \sum\limits_{i=1}^{n} \parallel e^{i}\parallel_h^2 +  \frac{\delta_t}{\lambda\left( 1-2\delta_t\widetilde{\alpha}\right)}\sum\limits_{i=0}^{n-1} \parallel \mathfrak{e}^{i}\parallel_h^2 + \frac{c_3T}{\left( 1-2\delta_t\widetilde{\alpha}\right)}( \delta_t + h^2 )^2.
\end{equation}
Applying Gronwall's inequality on \eqref{err_state24} for sufficiently small $\delta_t$ yields
\begin{equation}\label{err_state25}
\parallel e^{n}\parallel_h^2  \leq   C \delta_t\sum\limits_{i=0}^{n-1} \parallel \mathfrak{e}^{i}\parallel_h^2 + C ( \delta_t + h^2 )^2.
\end{equation}
Adding \eqref{est_adjoint6} (the error estimates in Theorem \ref{thm1_ocp} for adjoint equation) and \eqref{err_state25}, and employing Gronwall's inequality for sufficiently small $\delta_t$ we have 
\begin{equation}\label{err_state26}
\parallel e^{n}\parallel_h + \parallel \mathfrak{e}^{n}\parallel_h  \leq    C ( \delta_t + h^2 ).
\end{equation}
Next we take the inner product of \eqref{err_state2} and $\partial_t e^n:=\frac{e^{n+1}-e^n}{\delta_t}$, that yields
\begin{equation}\label{err_state27}
\begin{aligned}
\parallel \partial_t e^n\parallel^2_h=&\left( \Delta_h \left((y^{n})^2y^{n+1}-(Y^{n})^2 Y^{n+1}\right), \partial_t e^n\right) -\left(\Delta_h e^n, \partial_t e^n\right)_h\\
&-\epsilon^2 \left(\Delta_h e^{n+1}, \partial_t( \Delta_he^n)\right)_h + \frac{1}{\lambda}\left(\mathfrak{e}^{n}, \partial_te^{n}\right)_h + \left(\widetilde{F}^n, \partial_te^{n}\right)_h,\\
& \leq 2\widetilde{M} \parallel e^{n+1}\parallel_h \parallel  \partial_t( \Delta_he^n)\parallel_h + \vert e^{n}\vert_{2,h}^2 + \frac{1}{4} \parallel \partial_t e^n\parallel^2_h \\
& -\epsilon^2 \left(\Delta_h e^{n+1}, \partial_t( \Delta_he^n)\right)_h + \frac{1}{2\lambda^2}\parallel\mathfrak{e}^{n}\parallel_h^2+\frac{1}{2} \parallel \partial_t e^n\parallel^2_h + \parallel \widetilde{F}^n\parallel_h^2 + \frac{1}{4} \parallel \partial_t e^n\parallel^2_h.
\end{aligned}
\end{equation}
Using \eqref{truncation_err2} and \eqref{est_2} on \eqref{err_state27} we get
\begin{equation}\label{err_state28}
\epsilon^2 \frac{\vert e^{n+1}\vert_{2,h}^2 - \vert e^{n}\vert_{2,h}^2}{2\delta_t} \leq  \frac{\epsilon^2}{4} \frac{\vert e^{n+1}\vert_{2,h}^2 + \vert e^{n}\vert_{2,h}^2}{\delta_t} +  \vert e^{n}\vert_{2,h}^2 +C(\delta_t + h^2)^2.
\end{equation}
From \eqref{err_state28} we have 
\begin{equation}\label{err_state29}
\frac{\epsilon^2}{2} \vert e^{n+1}\vert_{2,h}^2  \leq  \left(\frac{3\epsilon^2}{2} + 2\delta_t \right)  \vert e^{n}\vert_{2,h}^2 +2\delta_tC(\delta_t + h^2)^2.
\end{equation}
Taking the sum over $n$ on \eqref{err_state29} we get
\begin{equation}\label{err_state290}
\frac{\epsilon^2}{2} \vert e^{n}\vert_{2,h}^2  \leq  \left(\frac{3\epsilon^2}{2} + 2\delta_t \right) \sum\limits_{i=1}^{n}\vert e^{i}\vert_{2,h}^2 +2TC(\delta_t + h^2)^2.
\end{equation}
Applying Gronwall's inequality on \eqref{err_state290} we have 
\begin{equation}\label{err_state291}
 \vert e^{n}\vert_{2,h}  \leq  C(\delta_t + h^2).
\end{equation}
Using \eqref{err_state26} and \eqref{err_state291} we obtain
\begin{equation}\label{err_state292}
 \vert e^{n}\vert_{1,h}  \leq  C(\delta_t + h^2).
\end{equation}
Now using \eqref{err_state26}, \eqref{err_state292} and \eqref{est_6} in discrete Sobolev's embedding theorem we get our result.
\end{proof}
\begin{theorem}[Convergence of the scheme $\textbf{S}3$]\label{thm3_ocp}
Let $y(x,t)$ and $p(x,t)$ be sufficiently smooth functions. For $\delta_t$ sufficiently small, the finite difference scheme \eqref{discrete_state3}-\eqref{discrete_adjoint} is first order in time and second order in space convergent, i.e., 
$$  \max\limits_n \left\{\parallel  y^n-Y^n \parallel_{\infty,h} + \parallel p^n-P^n \parallel_{\infty, h} \right\}  \leq C ( \delta_t + h^2 ).$$
\end{theorem}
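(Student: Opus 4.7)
The strategy mirrors Theorems \ref{thm1_ocp} and \ref{thm2_ocp}: derive a truncation equation by Taylor expansion, subtract \eqref{discrete_state3} to obtain an error equation, pair the error equation first with $e^{n+1}$ and then with $\partial_t e^n$ to obtain discrete $L^2$ and $H^2$ energy estimates, combine with the adjoint estimate, and finish with the discrete Sobolev embedding. Because the adjoint discretization \eqref{discrete_adjoint} is unchanged, the adjoint-side bounds \eqref{est_adjoint6}, \eqref{est_5}, \eqref{est_6} from Theorem \ref{thm1_ocp} transfer verbatim; only the state-side estimate needs to be redone in accordance with the new linearization.

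The Taylor expansion of \eqref{CH_state} at $t_n$ gives a truncation equation of the form
\begin{equation*}
\frac{y_i^{n+1}-y_i^n}{\delta_t}=\Delta_h(y_i^n)^3-3\Delta_h y_i^n+2\Delta_h y_i^{n+1}-\epsilon^2\Delta_h^2 y_i^{n+1}+\tfrac{1}{\lambda}p_i^n+\widetilde{F}_i^n,
\end{equation*}
with $|\widetilde{F}_i^n|\leq C(\delta_t+h^2)$; the stabilizer is first-order consistent since $2y^{n+1}-3y^n=-y^n+2(y^{n+1}-y^n)$. Subtracting \eqref{discrete_state3} and pairing the result with $e^{n+1}$, I treat the cubic contribution by the mean-value identity $(y^n)^3-(Y^n)^3=\widetilde{f}'(\xi)e^n$, discrete integration by parts, and \eqref{Lipchitz_CH}, exactly as in the proof of Theorem \ref{thm2_ocp}, which gives $(\Delta_h((y^n)^3-(Y^n)^3),e^{n+1})_h\leq \widetilde{M}\|e^n\|_h|e^{n+1}|_{2,h}$. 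The implicit stabilizer contributes $2(\Delta_h e^{n+1},e^{n+1})_h=-2|e^{n+1}|_{1,h}^2\leq 0$, which is dropped to the right; the explicit piece $-3(\Delta_h e^n,e^{n+1})_h$ is bounded by Cauchy--Schwarz and Young's inequality by $\tfrac{\epsilon^2}{4}|e^{n+1}|_{2,h}^2+C\|e^n\|_h^2$. The $\mathfrak{e}^n$, $\widetilde F^n$ and remaining diffusive terms are handled precisely as in \eqref{est_state3}. The resulting inequality has the same shape as \eqref{est_state4}; a discrete Gronwall argument, combined with the adjoint bound \eqref{est_adjoint6} applied as in Theorem \ref{thm2_ocp}, yields $\|e^n\|_h+\|\mathfrak{e}^n\|_h\leq C(\delta_t+h^2)$.

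For the $H^2$ estimate I pair the error equation with $\partial_t e^n$ and follow the pattern of \eqref{est_state11}--\eqref{est_3}. The cubic term is bounded by $2\widetilde M\|e^{n+1}\|_h\|\partial_t(\Delta_h e^n)\|_h$ and absorbed by the $\tfrac{\epsilon^2}{4}\|\partial_t(\Delta_h e^n)\|_h^2$ produced from the diffusive cross term. The stabilizer contributes $(2\Delta_h e^{n+1}-3\Delta_h e^n,\partial_t e^n)_h$, which after discrete integration by parts and Young's inequality yields only controllable multiples of $|e^{n+1}|_{2,h}^2$ and $|e^n|_{2,h}^2$. Using the previously established $L^2$ bound to handle the $\|e^{n+1}\|_h^2$ term on the right-hand side, a second Gronwall argument in $|\cdot|_{2,h}$ produces $|e^n|_{2,h}\leq C(\delta_t+h^2)$, and the interpolation identity $|e^n|_{1,h}^2=-(\Delta_h e^n,e^n)_h\leq |e^n|_{2,h}\|e^n\|_h$ gives the matching $H^1$ bound. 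The conclusion then follows from the discrete Sobolev embedding.

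The main obstacle is the bookkeeping around the new stabilization $-3\Delta_h e^n+2\Delta_h e^{n+1}$. The implicit piece is dissipative and therefore helpful in both the $L^2$ and $H^2$ steps; the explicit piece must be absorbed by the $\epsilon^2$-diffusion without forcing a time-step restriction that scales with $h$. Once this is verified, the proof is a straightforward modification of Theorems \ref{thm1_ocp} and \ref{thm2_ocp}, with constants adjusted to accommodate the extra Laplacian terms.
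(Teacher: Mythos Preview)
Your proposal is correct and follows the same overall architecture as the paper's proof: truncation equation, error equation, $L^2$ energy estimate via pairing with $e^{n+1}$, coupling with the adjoint bound \eqref{est_adjoint6}, then $H^2$ estimate via pairing with $\partial_t e^n$, and finally discrete Sobolev embedding.

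The one substantive difference is how you handle the stabilization terms in the $L^2$ step. You exploit the sign $2(\Delta_h e^{n+1},e^{n+1})_h=-2|e^{n+1}|_{1,h}^2\le 0$ and discard it, and you move the Laplacian in $-3(\Delta_h e^n,e^{n+1})_h$ onto $e^{n+1}$ so that the right-hand side carries only $\|e^n\|_h^2$. The paper instead bounds both stabilizer terms by Cauchy--Schwarz as $2\|e^{n+1}\|_h|e^{n+1}|_{2,h}$ and $3\|e^{n+1}\|_h|e^{n}|_{2,h}$, which after Young's inequality produces an extra pair $\tfrac{\epsilon^2}{2}(|e^n|_{2,h}^2-|e^{n+1}|_{2,h}^2)$ that must be telescoped away when summing in $n$. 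Your treatment is slightly cleaner and avoids this telescoping; the paper's route is more uniform with the other terms but carries the extra bookkeeping. In the $H^2$ step both arguments coincide: the stabilizer contributes controllable multiples of $|e^{n}|_{2,h}^2$ and $|e^{n+1}|_{2,h}^2$, absorbed by a Gronwall argument under a restriction $\delta_t\lesssim\epsilon^2$. One minor slip: in your $H^2$ bound on the cubic term you wrote $\|e^{n+1}\|_h$ where it should be $\|e^{n}\|_h$, but since both are already $O(\delta_t+h^2)$ this is harmless.
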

\begin{proof}
We provide an estimate that is consistent with the linear state approximation scheme \eqref{discrete_state2}, and for an estimate that is consistent with discrete adjoint, we turn to Theorem \ref{thm1_ocp}. Using Taylor expansion we can see that the exact solution $y_i^n=y(x_i, t_n)$ and $p_i^n=p(x_i, t_n)$ satisfy the following equation
\begin{equation}\label{taylor_state3}
\frac{y_i^{n+1}-y_i^n}{\delta_t}=\Delta_h (y_i^{n})^3  -3\Delta_h y_i^n + 2\Delta_h y_i^{n+1}-\epsilon^2 \Delta_h^2y_i^{n+1} + \frac{1}{\lambda}p_i^{n} + \widehat{F}_i^n,
\end{equation} 
where $\widehat{F}_i^n$ denotes the truncation error, which satisfies the following for some positive constant $c_4$
 \begin{equation}\label{truncation_err3}
\max\limits_{i,n} \vert \widehat{F}_i^n\vert \leq c_4 ( \delta_t + h^2 ). 
 \end{equation}
Let us define the error $e_i^n=y_i^n-Y_i^n$ and $\mathfrak{e}_i^n=p_i^n-P_i^n$. Then taking the difference between \eqref{taylor_state3} and \eqref{discrete_state3} yields
\begin{equation}\label{err_state31}
\frac{e_i^{n+1}-e_i^n}{\delta_t}=\Delta_h \left((y_i^{n})^3-(Y_i^{n})^3 \right)-3\Delta_h e_i^n + 2\Delta_h e_i^{n+1}-\epsilon^2 \Delta_h^2e_i^{n+1} + \frac{1}{\lambda}\mathfrak{e}_i^{n} + \widehat{ F}_i^n.
\end{equation} 
Taking the inner product of \eqref{err_state31} and $e^{n+1}$ yields
\begin{equation}\label{err_state32}
\begin{aligned}
\left(\frac{e^{n+1}-e^n}{\delta_t}, e^{n+1}\right)_h=&\left( \Delta_h \left((y_i^{n})^3-(Y_i^{n})^3\right), e^{n+1}\right) -3\left(\Delta_h e^n, e^{n+1}\right)_h 
\\ & + 2\left(\Delta_h e^{n+1}, e^{n+1}\right)_h -\epsilon^2 \vert e^{n+1}\vert_{2,h}^2 + \frac{1}{\lambda}\left(\mathfrak{e}^{n}, e^{n+1}\right)_h + \left(\widehat{F}^n, e^{n+1}\right)_h.
\end{aligned}
\end{equation}
Using the differentiability of $f$ and Cauchy-Schwarz on \eqref{err_state32} we have 
\begin{equation}\label{err_state33}
\begin{aligned}
\frac{\parallel e^{n+1}\parallel_h^2 - \parallel e^{n}\parallel_h^2}{2\delta_t} &\leq M \parallel e^{n}\parallel_h \vert e^{n+1}\vert_{2,h}+ 3\parallel e^{n+1}\parallel_h \vert e^{n}\vert_{2,h} + 
2\parallel e^{n+1}\parallel_h \vert e^{n+1}\vert_{2,h}\\
&-{\epsilon^2} \vert e^{n+1}\vert_{2,h}^2 + \frac{1}{\lambda}\parallel \mathfrak{e}^{n}\parallel_h \parallel e^{n+1}\parallel_h + \parallel \widehat{F}^{n}\parallel_h \parallel e^{n+1}\parallel_h\\
& \leq \frac{M^2}{\epsilon^2} \parallel e^{n}\parallel_h^2 + \frac{\epsilon^2}{4}\vert e^{n+1}\vert_{2,h} + \frac{9}{2\epsilon^2} \parallel e^{n+1}\parallel_h^2 + \frac{\epsilon^2}{2}\vert e^{n}\vert_{2,h} + \frac{4}{\epsilon^2} \parallel e^{n+1}\parallel_h^2 + \frac{\epsilon^2}{4}\vert e^{n+1}\vert_{2,h} \\
& -{\epsilon^2} \vert e^{n+1}\vert_{2,h}^2 + \frac{1}{2\lambda^2}\parallel \mathfrak{e}^{n}\parallel_h^2 + \frac{1}{2} \parallel e^{n+1}\parallel_h^2 + \frac{1}{2} \parallel \widehat{F}^{n}\parallel_h^2 + \frac{1}{2} \parallel e^{n+1}\parallel_h^2,\\
& \leq \frac{M^2}{\epsilon^2} \parallel e^{n}\parallel_h^2 + \underbrace{\left(\frac{17}{2\epsilon^2} + 1 \right)}_{\alpha_0} \parallel e^{n+1}\parallel_h^2 -\frac{\epsilon^2}{2} \vert e^{n+1}\vert_{2,h}^2 + \frac{\epsilon^2}{2} \vert e^{n}\vert_{2,h}^2 \\
&\quad \quad \quad + \frac{1}{2\lambda^2}\parallel \mathfrak{e}^{n}\parallel_h^2 + \frac{c_4}{2} (\delta_t + h^2)^2,
\end{aligned}
\end{equation}
where on the second inequality we use Young's inequality and in the last inequality we use \eqref{truncation_err3}. From \eqref{err_state33} we get
\begin{equation}\label{err_state34}
\left( 1-2\delta_t\alpha_{0}\right) \parallel e^{n+1}\parallel_h^2 + \epsilon^2 \delta_t\left(\vert e^{n+1}\vert_{2,h}^2 -\vert e^{n}\vert_{2,h}^2 \right)\leq \left( 1 +\frac{2\delta_t M^2}{\epsilon^2} \right) \parallel e^{n}\parallel_h^2 + \frac{\delta_t}{\lambda^2}\parallel \mathfrak{e}^{n}\parallel_h^2 + c_4\delta_t ( \delta_t + h^2 )^2.
\end{equation}
For $\delta_t<\frac{1}{2{\alpha_0}}$ we take the sum over $n$ to obtain
\begin{equation}\label{err_state35}
\parallel e^{n}\parallel_h^2  \leq \frac{\left( 1 +\frac{2\delta_t M^2}{\epsilon^2} \right)}{\left( 1-2\delta_t{\alpha_0}\right)} \sum\limits_{i=1}^{n} \parallel e^{i}\parallel_h^2 +  \frac{\delta_t}{\lambda^2\left( 1-2\delta_t{\alpha_0}\right)}\sum\limits_{i=0}^{n-1} \parallel \mathfrak{e}^{i}\parallel_h^2 + \frac{c_4T}{\left( 1-2\delta_t{\alpha_0}\right)}( \delta_t + h^2 )^2.
\end{equation}
Applying Gronwall's inequality on \eqref{err_state35} for sufficiently small $\delta_t$, we get
\begin{equation}\label{err_state36}
\parallel e^{n}\parallel_h^2  \leq   C \delta_t\sum\limits_{i=0}^{n-1} \parallel \mathfrak{e}^{i}\parallel_h^2 + C ( \delta_t + h^2 )^2.
\end{equation}
Adding \eqref{est_adjoint6} (error estimates in Theorem \ref{thm1_ocp} for adjoint equation) and \eqref{err_state36} , and employing Gronwall's inequality for sufficiently small $\delta_t$ we have  
\begin{equation}\label{err_state37}
\parallel e^{n}\parallel_h + \parallel \mathfrak{e}^{n}\parallel_h  \leq    C ( \delta_t + h^2 ).
\end{equation}
Next we take the inner product of \eqref{err_state31} and $\partial_t e^n:=\frac{e^{n+1}-e^n}{\delta_t}$, that produces
\begin{equation}\label{err_state38}
\begin{aligned}
\parallel \partial_t e^n\parallel^2_h=&\left( \Delta_h \left((y_i^{n})^3-(Y_i^{n})^3\right), \partial_t e^n\right) -3\left(\Delta_h e^n, \partial_t e^n\right)_h + 2\left(\Delta_h e^{n+1}, \partial_t e^n\right)_h\\
&-\epsilon^2 \left(\Delta_h e^{n+1}, \partial_t( \Delta_he^n)\right)_h + \frac{1}{\lambda}\left(\mathfrak{e}^{n}, \partial_te^{n}\right)_h + \left(\widehat{F}^n, \partial_te^{n}\right)_h,\\
& \leq {M} \parallel e^{n}\parallel_h \parallel  \partial_t( \Delta_he^n)\parallel_h + 9\vert e^{n}\vert_{2,h}^2 + \frac{1}{4} \parallel \partial_t e^n\parallel^2_h + 4\vert e^{n+1}\vert_{2,h}^2 + \frac{1}{4} \parallel \partial_t e^n\parallel^2_h \\
& -\epsilon^2 \left(\Delta_h e^{n+1}, \partial_t( \Delta_he^n)\right)_h + \frac{1}{\lambda^2}\parallel\mathfrak{e}^{n}\parallel_h^2+\frac{1}{4} \parallel \partial_t e^n\parallel^2_h + \parallel \widehat{F}^n\parallel_h^2 + \frac{1}{4} \parallel \partial_t e^n\parallel^2_h,
\end{aligned}
\end{equation}
where on the second inequality we use differentiability of $f$ and Young's inequality. Using \eqref{err_state37}, \eqref{truncation_err3} and \eqref{est_2} on \eqref{err_state38} we have 
\begin{equation}\label{err_state39}
\epsilon^2 \left(\Delta_h e^{n+1}, \partial_t( \Delta_he^n)\right)_h \leq \frac{\epsilon^2}{4} \parallel  \partial_t( \Delta_he^n)\parallel_h^2 + 9\vert e^{n}\vert_{2,h}^2 + 4\vert e^{n+1}\vert_{2,h}^2 + C(\delta_t + h^2)^2.
\end{equation}
From \eqref{err_state39} we get
\begin{equation}\label{err_state40}
\epsilon^2 \frac{\vert e^{n+1}\vert_{2,h}^2 - \vert e^{n}\vert_{2,h}^2}{2\delta_t} \leq  \frac{\epsilon^2}{4} \frac{\vert e^{n+1}\vert_{2,h}^2 + \vert e^{n}\vert_{2,h}^2}{\delta_t} +  9\vert e^{n}\vert_{2,h}^2 +4\vert e^{n+1}\vert_{2,h}^2 +C(\delta_t + h^2)^2.
\end{equation}
This further implies
\begin{equation}\label{err_state41}
\left(\frac{\epsilon^2}{2}-4\delta_t\right) \vert e^{n+1}\vert_{2,h}^2  \leq  \left(\frac{3\epsilon^2}{2} + 9\delta_t \right)  \vert e^{n}\vert_{2,h}^2 +2\delta_tC(\delta_t + h^2)^2.
\end{equation}
For $\delta_t< \frac{\epsilon^2}{8}$ we now take the sum over $n$ in \eqref{err_state41} to obtain
\begin{equation}\label{err_state42}
\left(\frac{\epsilon^2}{2}-4\delta_t\right) \vert e^{n}\vert_{2,h}^2  \leq  \left(\frac{3\epsilon^2}{2} + 9\delta_t \right) \sum\limits_{i=1}^{n}\vert e^{i}\vert_{2,h}^2 +2TC(\delta_t + h^2)^2.
\end{equation}
Applying Gronwall's inequality on \eqref{err_state42} we have 
\begin{equation}\label{err_state43}
 \vert e^{n}\vert_{2,h}  \leq  C(\delta_t + h^2).
\end{equation}
Using \eqref{err_state37} and \eqref{err_state43} we obtain
\begin{equation}\label{err_state44}
 \vert e^{n}\vert_{1,h}  \leq  C(\delta_t + h^2).
\end{equation}
Then using \eqref{err_state37}, \eqref{err_state44} and \eqref{est_6} in discrete Sobolev's embedding theorem we get our theorem.
\end{proof}
\begin{remark}
To obtain the stability estimates of the proposed schemes in $\parallel \cdot \parallel_h$, one can take an inner product of discrete state equations \eqref{discrete_state}, \eqref{discrete_state2} and \eqref{discrete_state3} with $Y^{n+1}$ and inner product of discrete adjoint equation \eqref{discrete_adjoint} with $P^n$. By following the procedures of the aforementioned convergence proofs for all the schemes, one can obtain the estimate for $\delta_t \leq O(\epsilon^2)$, $\parallel Y^{n} \parallel_h + \parallel P^{n} \parallel_h \leq C \sum\limits_{i=1}^{n} \parallel\widehat{Y}^i\parallel_h$ for some generic constant $C$. However, in numerical experiments we observe that even for $\delta_t > O(\epsilon^2)$ the schemes are stable and accurate.
\end{remark}
\begin{remark}
Derivation of the proposed schemes in higher dimensions is straightforward in finite difference setting and proceeds in precisely the same manner as in the 1D case,  so we omit the specifics. We only show the numerics in higher dimensions.
\end{remark}
\begin{remark}
One can replace the term $f'(Y_i^{n})\Delta_h P_i^n$ in \eqref{discrete_adjoint} by $f'(Y_i^{n+1})\Delta_h P_i^n$ and get the same convergence results for the respective schemes.
\end{remark}
\section{Numerical Illustration}\label{section4}
We now investigate the solution behaviour of the OCP \eqref{OCP}-\eqref{CH_state} using the proposed discretization strategies. To solve the nonlinear scheme $\textbf{S}1$ we use Newton method at each time step with residual history of $1 e{-10}$. 
\subsection{Experiments in 1D}
First we run experiments in 1D and demonstrate the accuracy and stability of those proposed schemes in space and time. Moreover we run the schemes for different desired state.
\subsubsection{Accuracy test} 
Since the exact solution of the OCP \eqref{OCP}-\eqref{CH_state} is not known, a comparison between the solution of \eqref{OCP}-\eqref{CH_state} on a coarse mesh with that on a fine mesh is considered to check the accuracy. The error in the numerical solution is measured in $L^{\infty}(0,T; L^{\infty}(\Omega))$. We take the spatial computational domain $\Omega=(0, 1)$. The initial and desired state for the test are $y(x, 0)=\cos(2\pi x)$ and $\widehat{y}(x,t)=\cos(2\pi x)e^{-t}$ respectively.  First, we check the precision with respect to step size
for all the proposed schemes for a time window $[0,T=0.1]$. We discretize the spatial domain by taking a mesh size $h=1/256$. 
\begin{figure}[h!]
    \centering
    \subfloat{{\includegraphics[height=5cm,width=6.5cm]{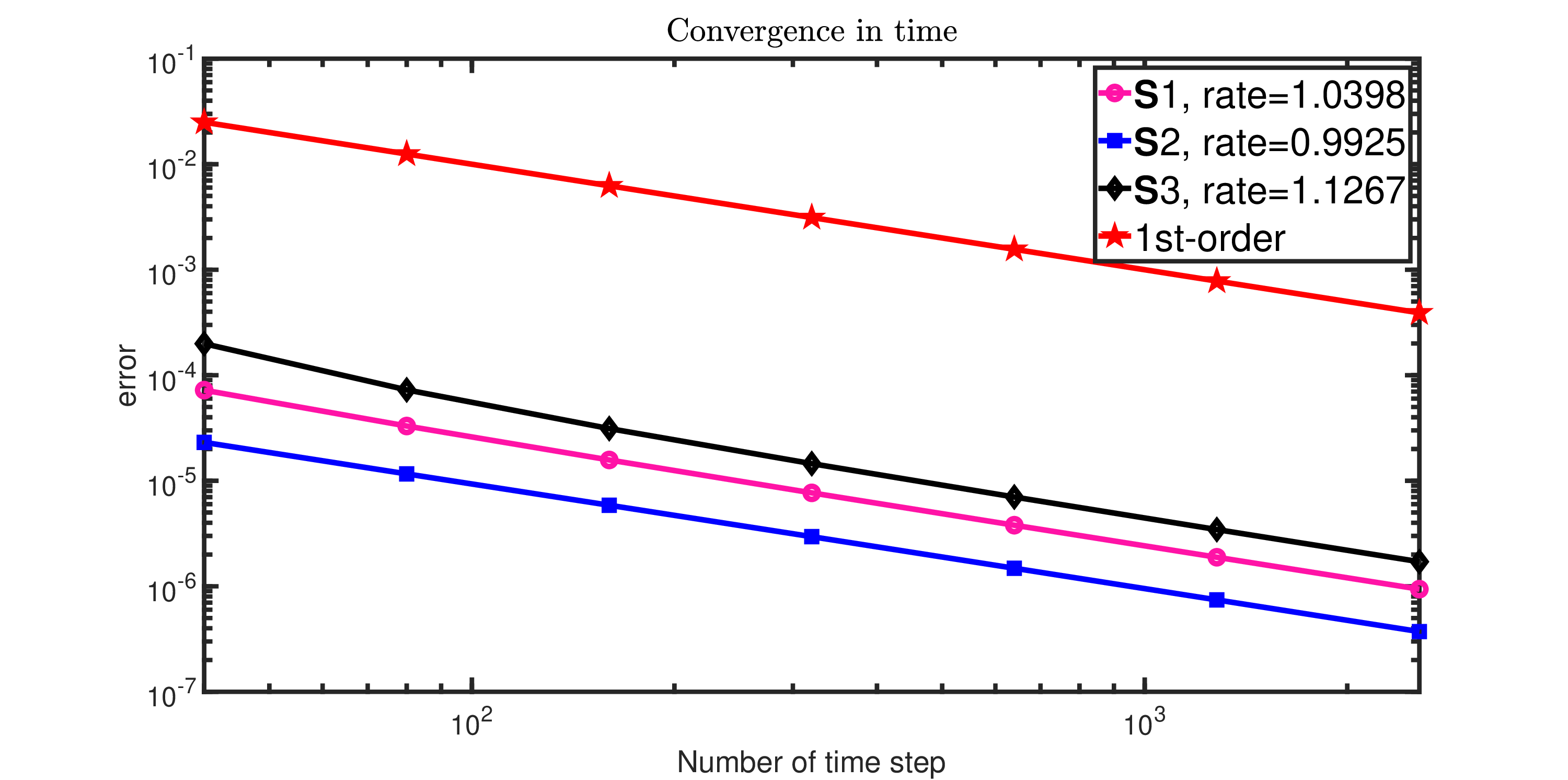} }}
     \subfloat{{\includegraphics[height=5cm,width=6.5cm]{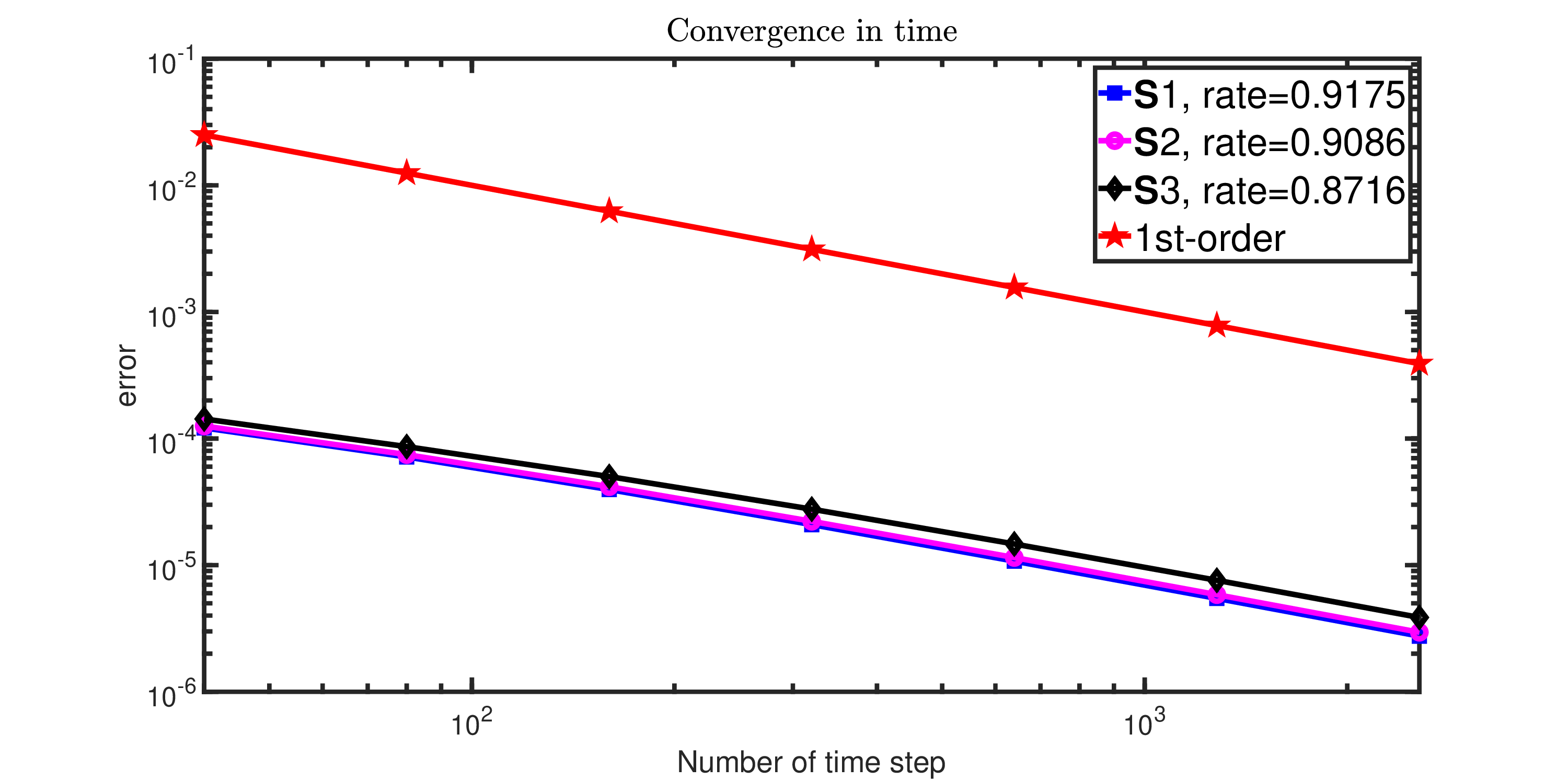} }}
    \caption{Convergence curves in time in $\log-\log$ scale with $\epsilon=0.05,  \lambda=0.1$ : on the left for the state equation, on the right for the adjoint equation}
    \label{accuracy_time}
\end{figure}
In Figure \ref{accuracy_time} we observe that the first order convergence rate in time for both state and adjoint equation is achieved for all the proposed schemes. Also note that schemes are stable and accurate for $\delta_t>O(\epsilon^2)$. To test the accuracy in mesh size we fix the final time at $T=0.01$ and $N_t=200$.
\begin{figure}[h!]
    \centering
    \subfloat{{\includegraphics[height=5cm,width=6.5cm]{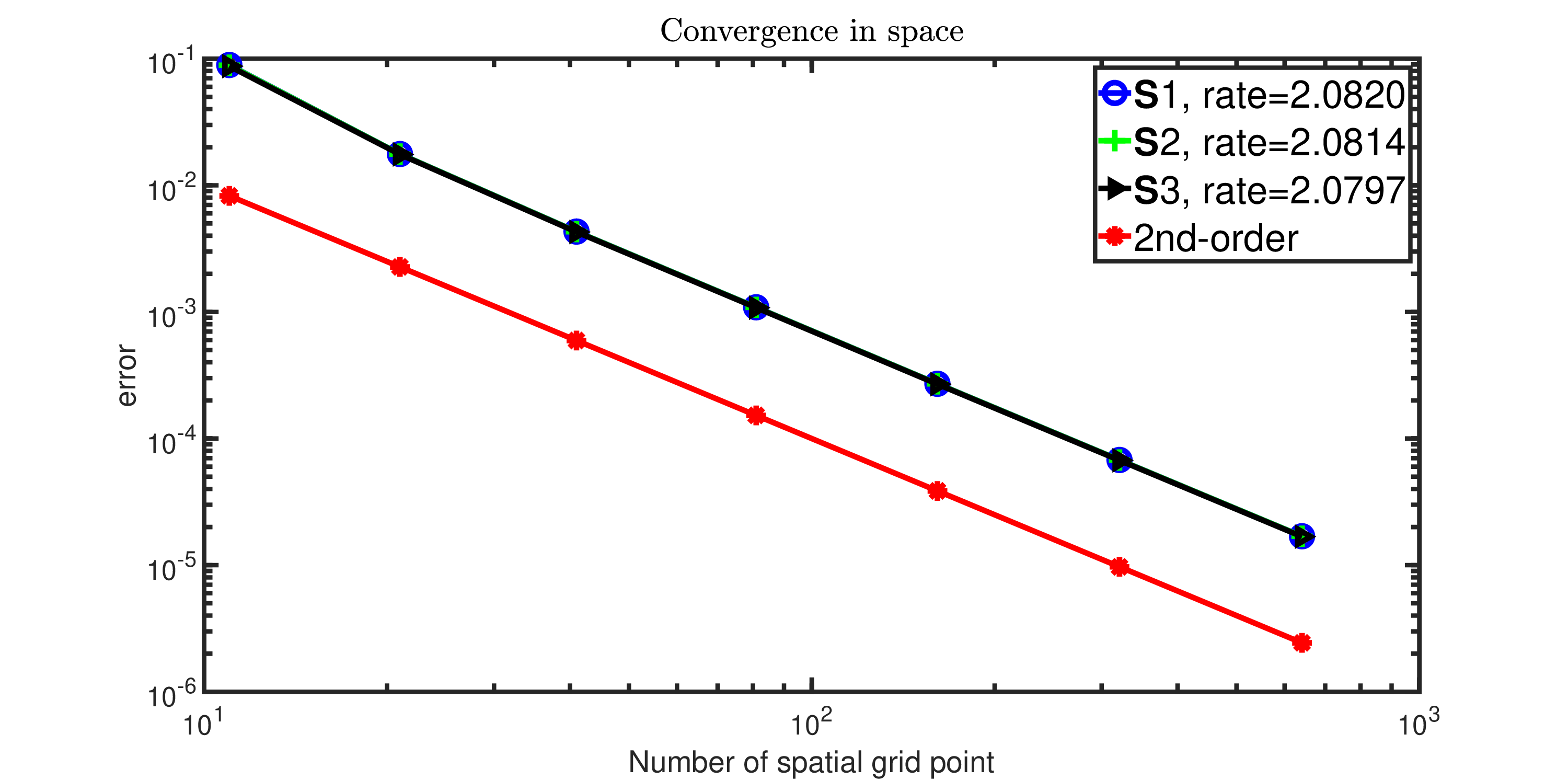} }}
     \subfloat{{\includegraphics[height=5cm,width=6.5cm]{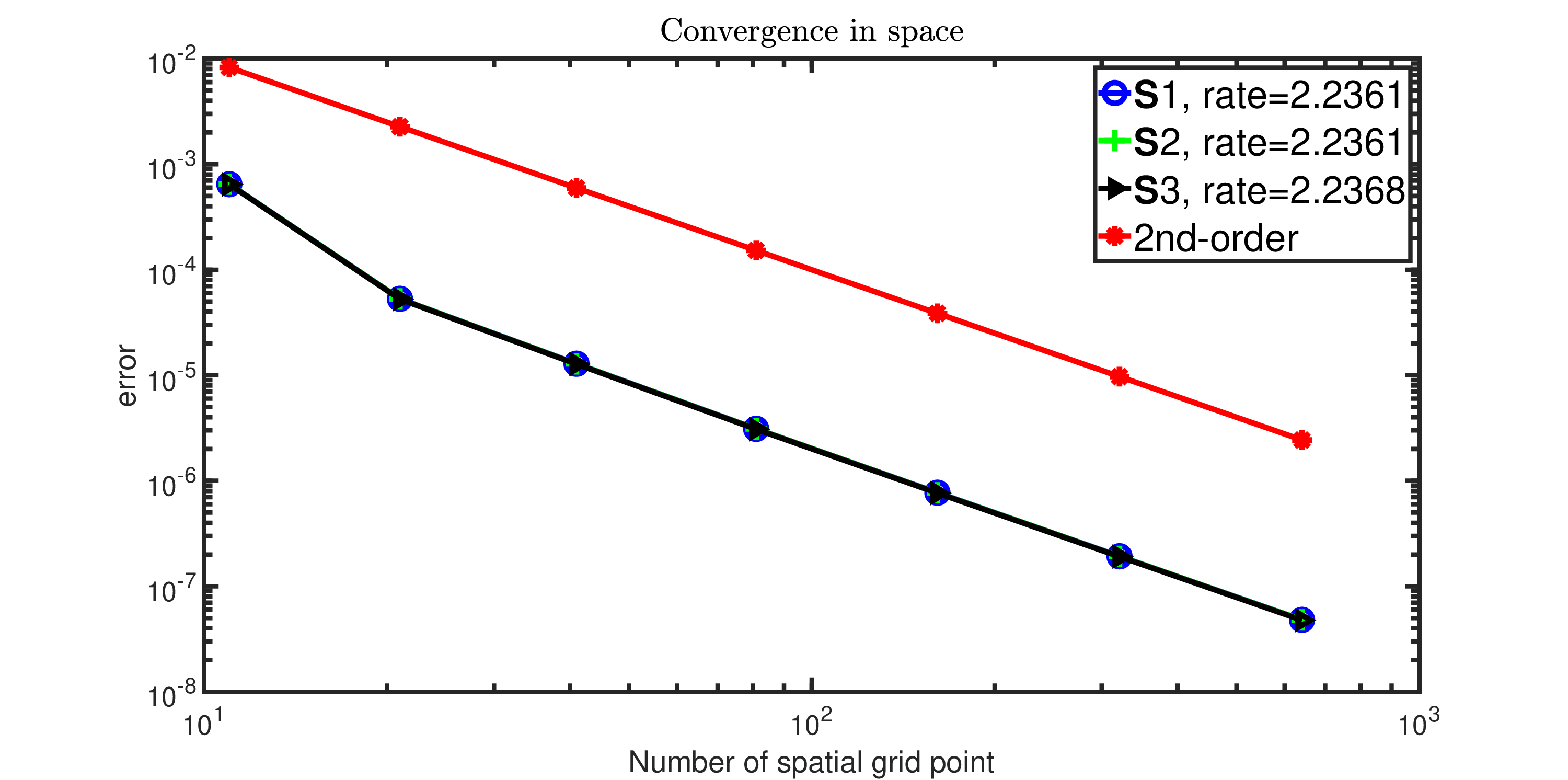} }}
    \caption{Convergence curves in mesh size in $\log-\log$ scale with $\epsilon=0.05,  \lambda=0.1$ : on the left for the state equation, on the right for the adjoint equation}
    \label{accuracy_space}
\end{figure}
Figure \ref{accuracy_space} shows how the suggested schemes accomplish the second order convergence in mesh size for both state and adjoint equations. 
\subsubsection{Solution for different target states} 
In this subsection, we plot solutions of the state equation and control for different desired state. For each experiment in this part, we use $\Omega=(0,1)$ with mesh size $h=1/256$, time step $\delta_t=1 e{-4}$ with $N_t=100$.
We run the scheme $\textbf{S}1$ to achieve the desired state $\widehat{y}(x, t)=\cos(2\pi x)$.
\begin{figure}[h!]
    \centering
    \subfloat{{\includegraphics[height=5.2cm,width=5cm]{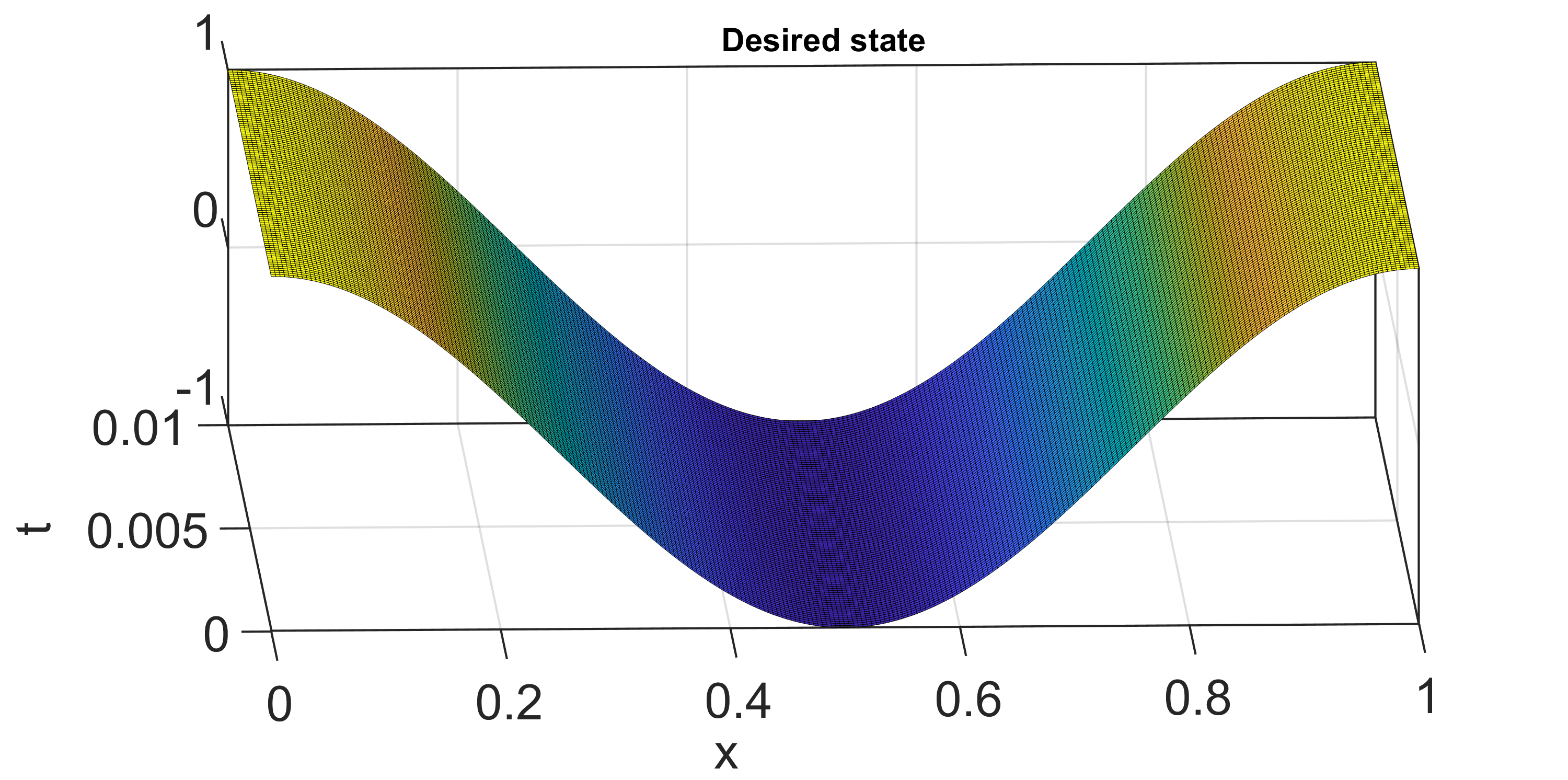} }}
     \subfloat{{\includegraphics[height=5.2cm,width=5cm]{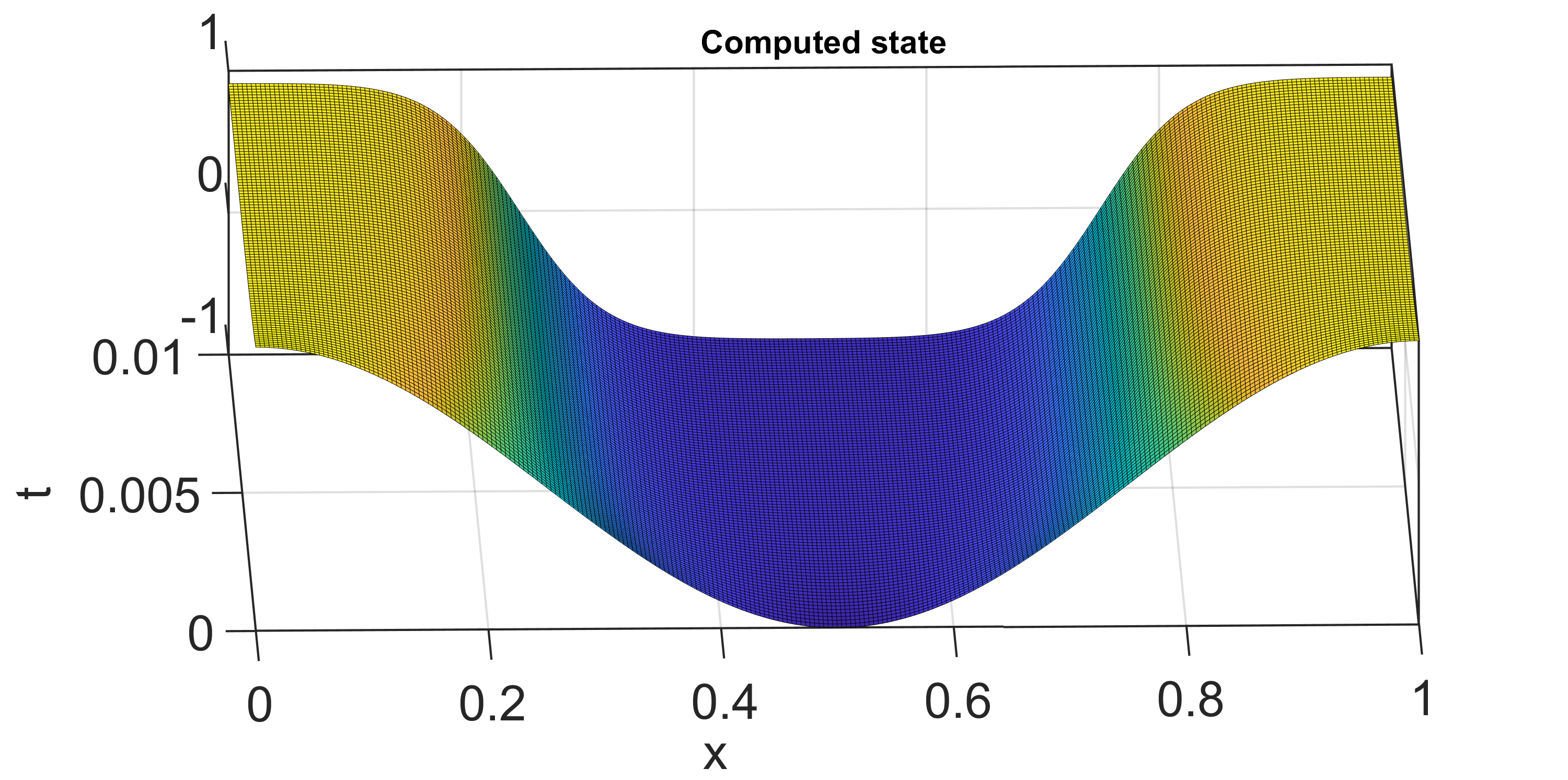} }}
     \subfloat{{\includegraphics[height=5.2cm,width=5cm]{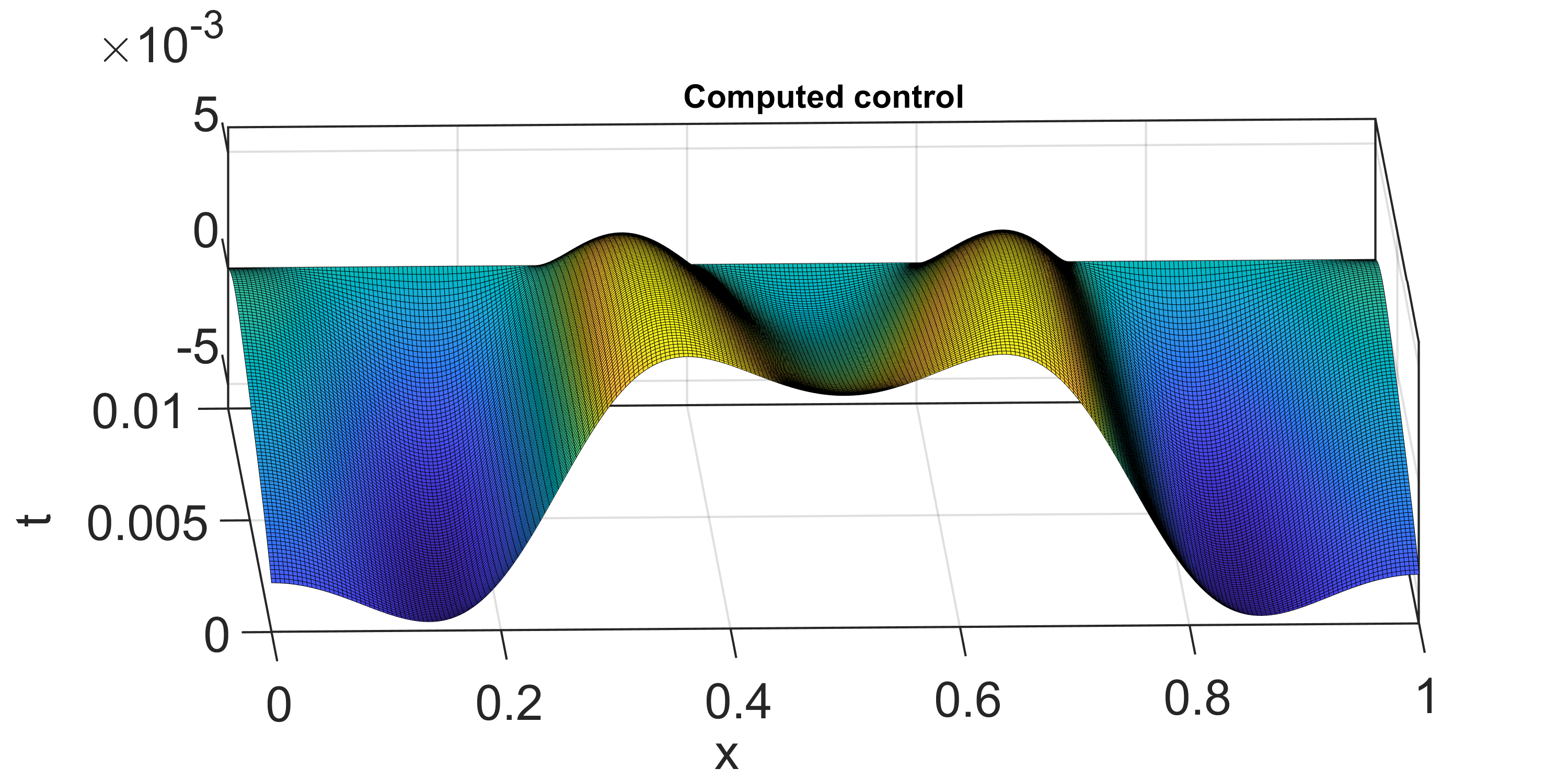} }}
    \caption{ On the left : the desired state; In the middle: the computed state; On the right : the computed control; For $\epsilon=0.05, \lambda=0.1$.}
    \label{s1}
\end{figure}
\begin{figure}[h!]
    \centering
     \subfloat{{\includegraphics[height=5.2cm,width=5.5cm]{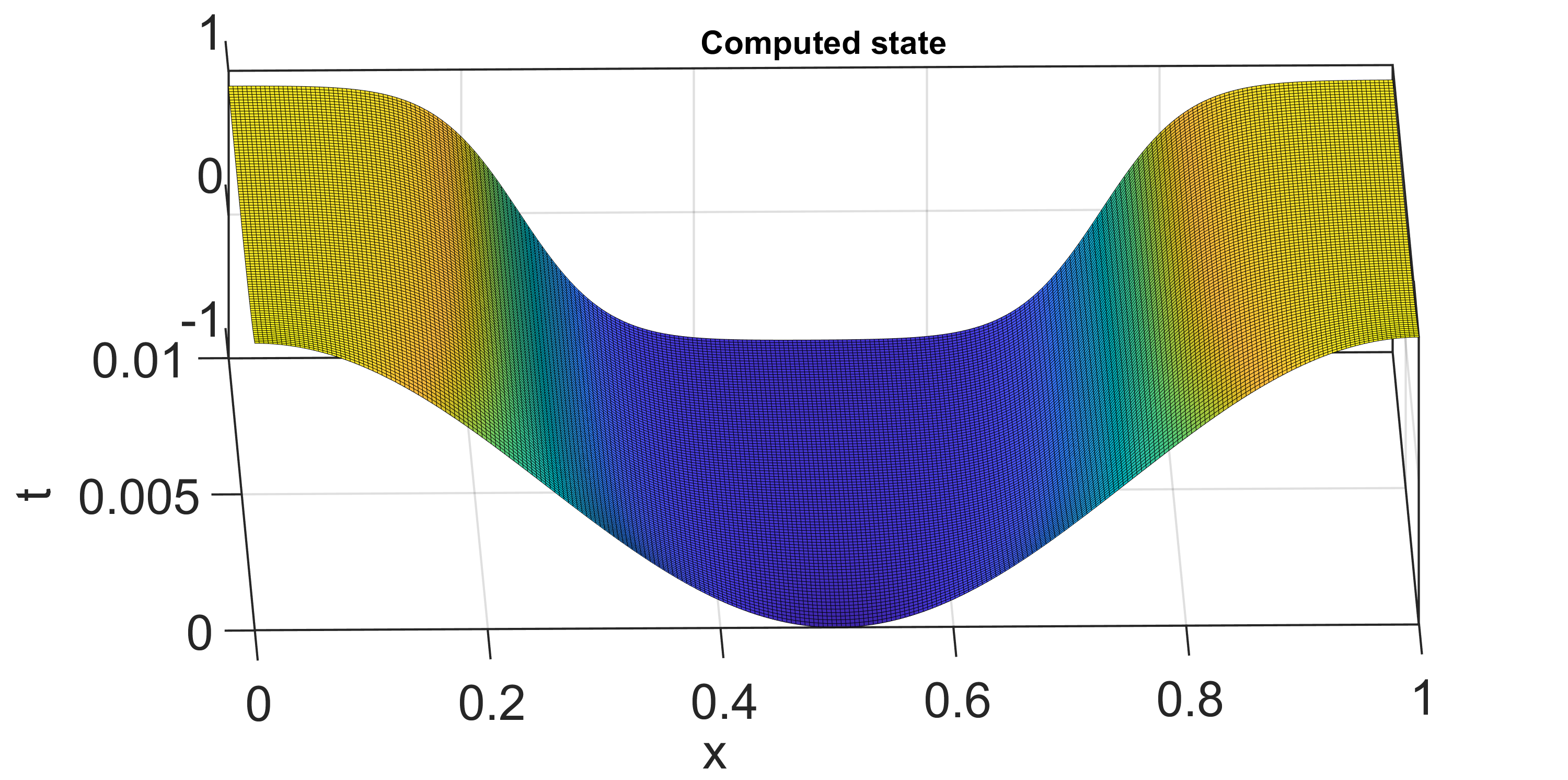} }}
     \subfloat{{\includegraphics[height=5.2cm,width=5.5cm]{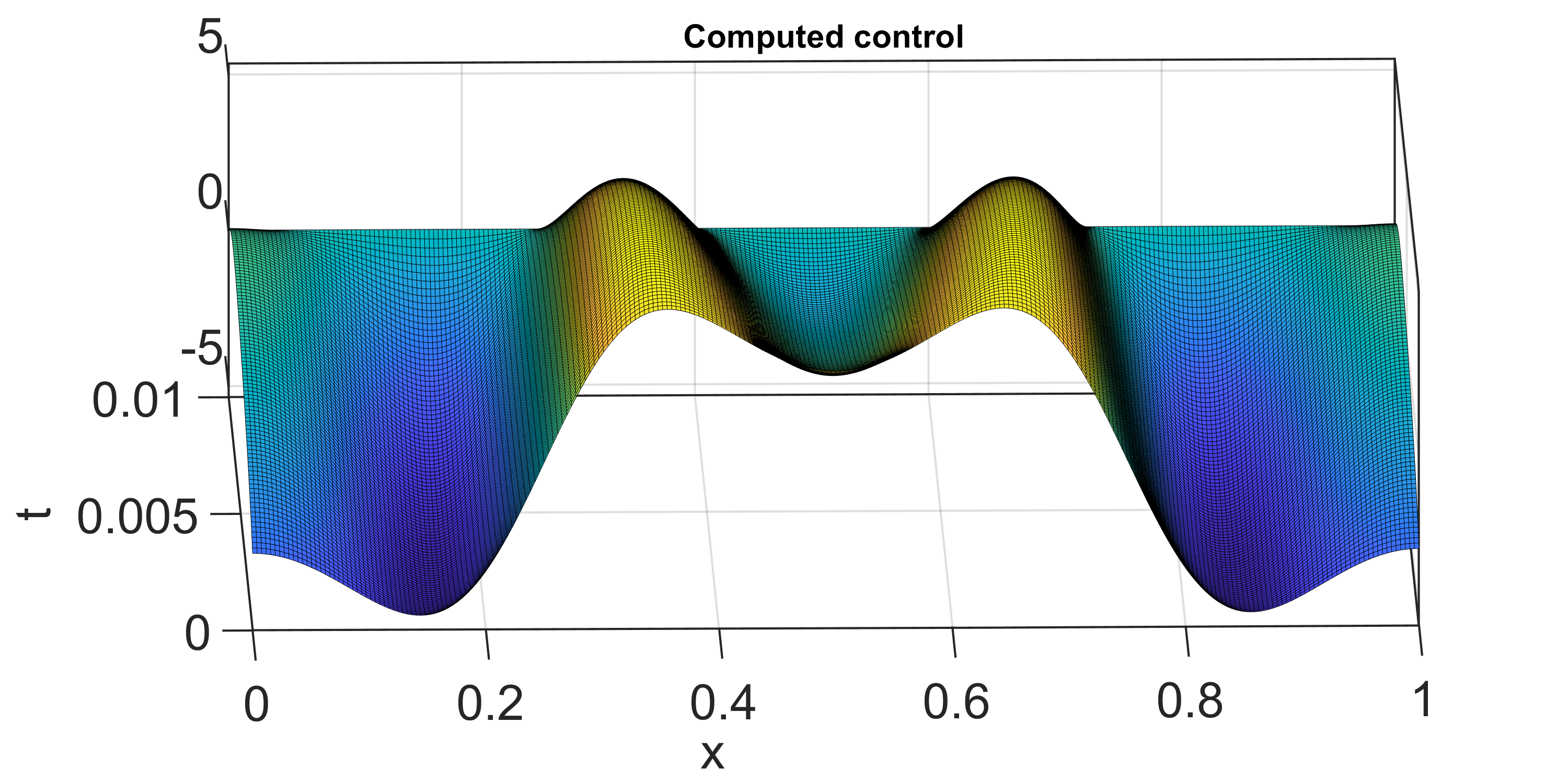} }}
    \caption{ On the left : the computed state; On the right : the computed control; For $\epsilon=0.05, \lambda=1 e{-4}$.}
    \label{s12}
\end{figure}
In Figure \ref{s1} we plot the desired state, the computed state and the optimal control for $\epsilon=0.05$ and $\lambda=0.1$. To get an improvement in accuracy on computed state we take $\lambda=1 e{-4}$ and plot the solution in Figure \ref{s12}. We note that just the magnitude of calculated control changes, with no improvement in computed state. Now, we test to check whether the calculated state improves by changing $\epsilon$ to $\epsilon=0.09$. The solutions for $\epsilon=0.09$ are shown in Figure 5, which demonstrates that the calculated state is extremely close to the target state.  So It is evident that in order to get a solution profile that is identical to the intended state, the calculated state relies on the choice of $\epsilon$.
\begin{figure}[h!]
    \centering
     \subfloat{{\includegraphics[height=5.2cm,width=5.5cm]{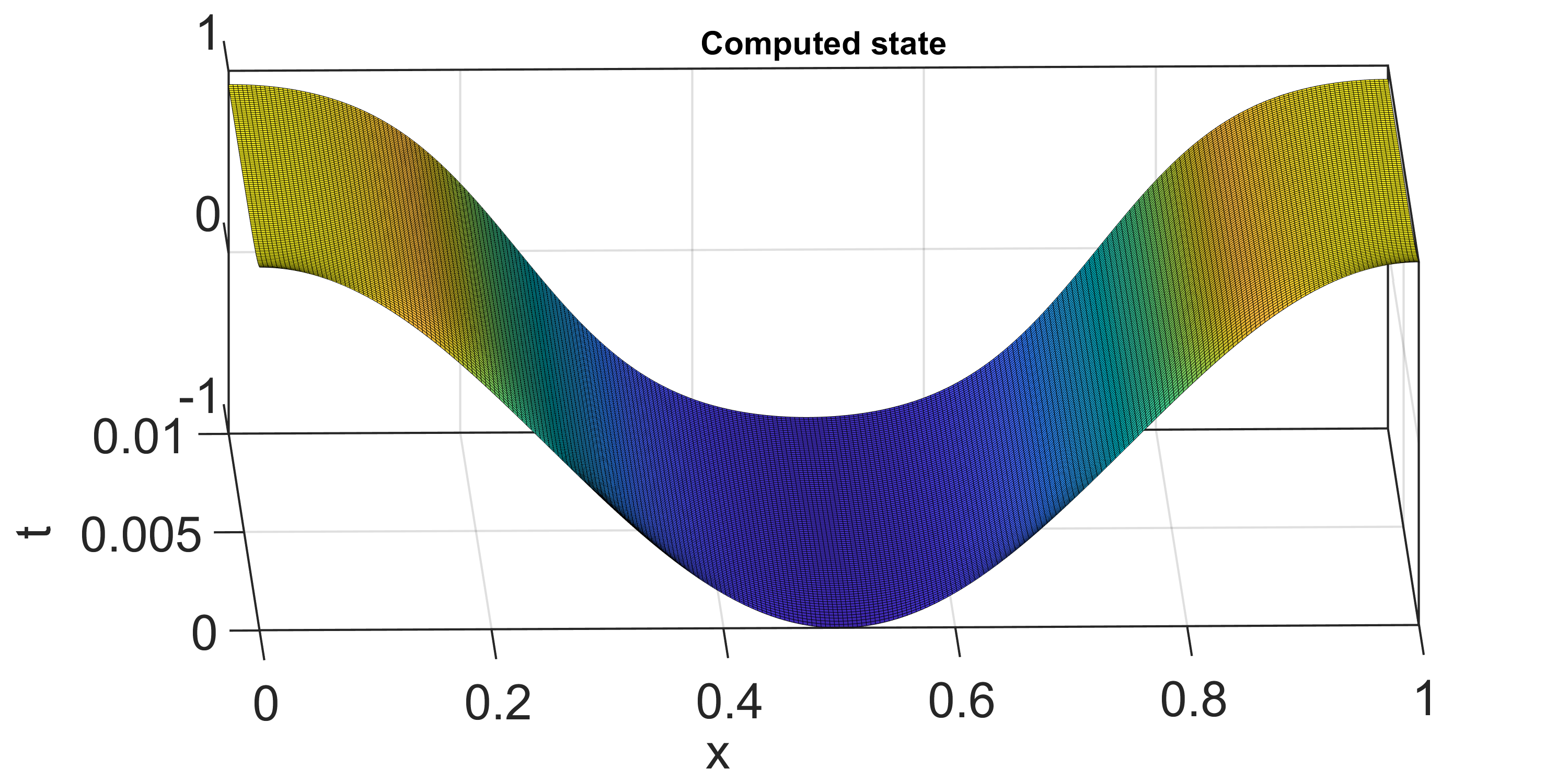} }}
    \subfloat{{\includegraphics[height=5.2cm,width=5.5cm]{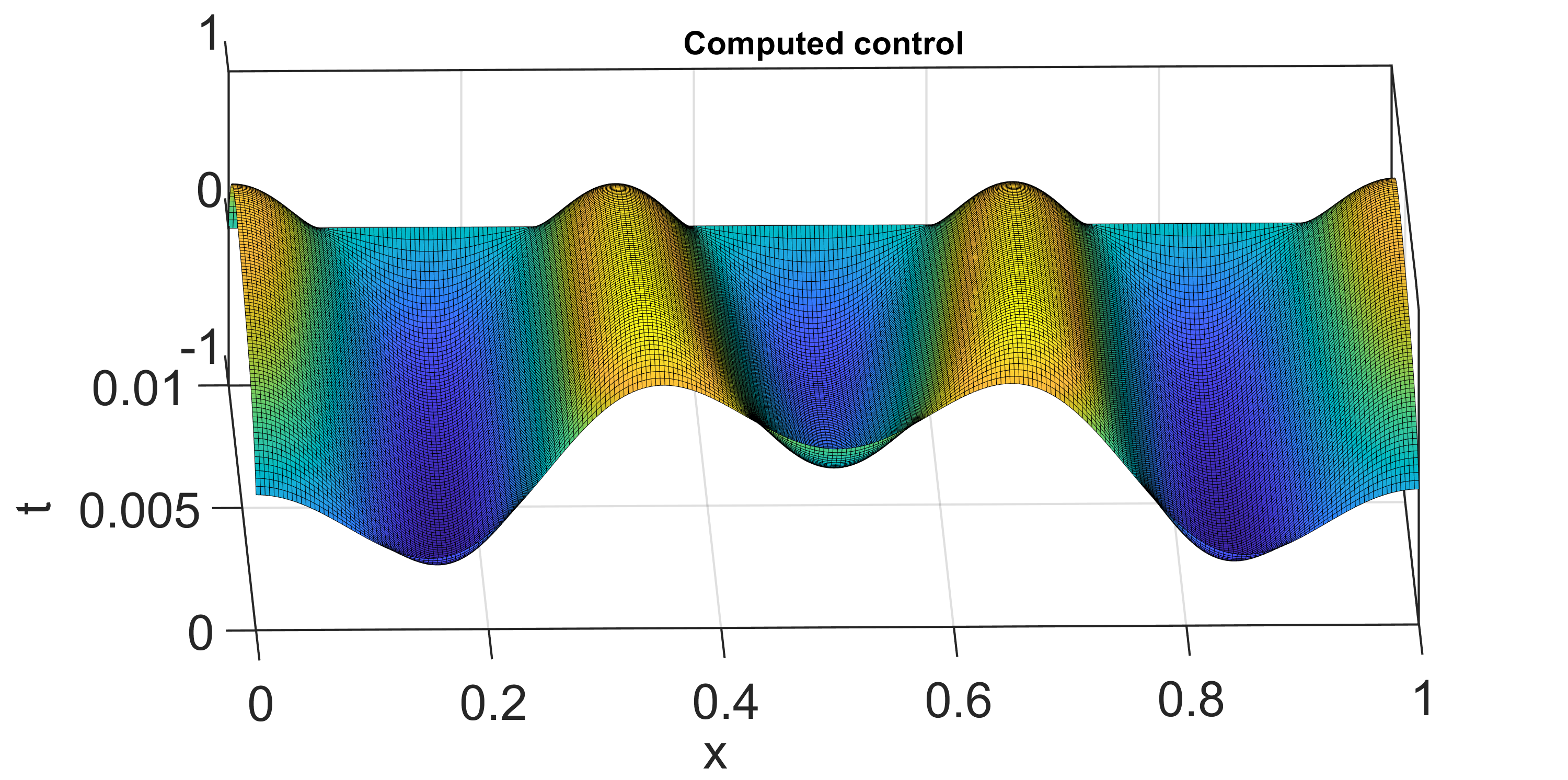} }}
    \caption{ On the left : the computed state; On the right : the computed control; For $\epsilon=0.09, \lambda=1 e{-4}$.}
    \label{s123}
\end{figure}
Next we run the scheme $\textbf{S}2$ with the target state $\widehat{y}(x,t)=\sin(\pi x)(t^2 +1)$.
\begin{figure}[h!]
    \centering
    \subfloat{{\includegraphics[height=5.2cm,width=5cm]{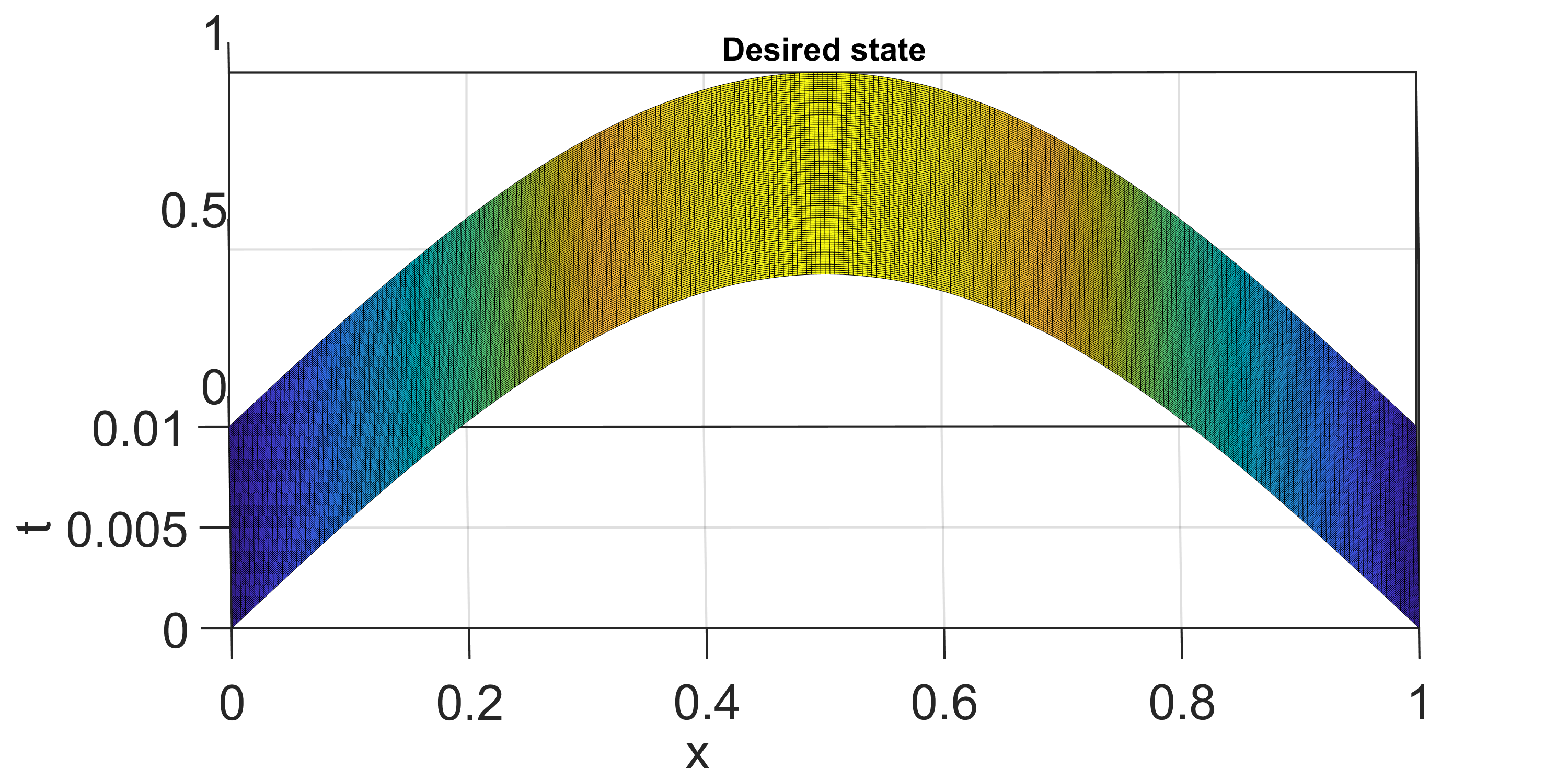} }}
     \subfloat{{\includegraphics[height=5.2cm,width=5cm]{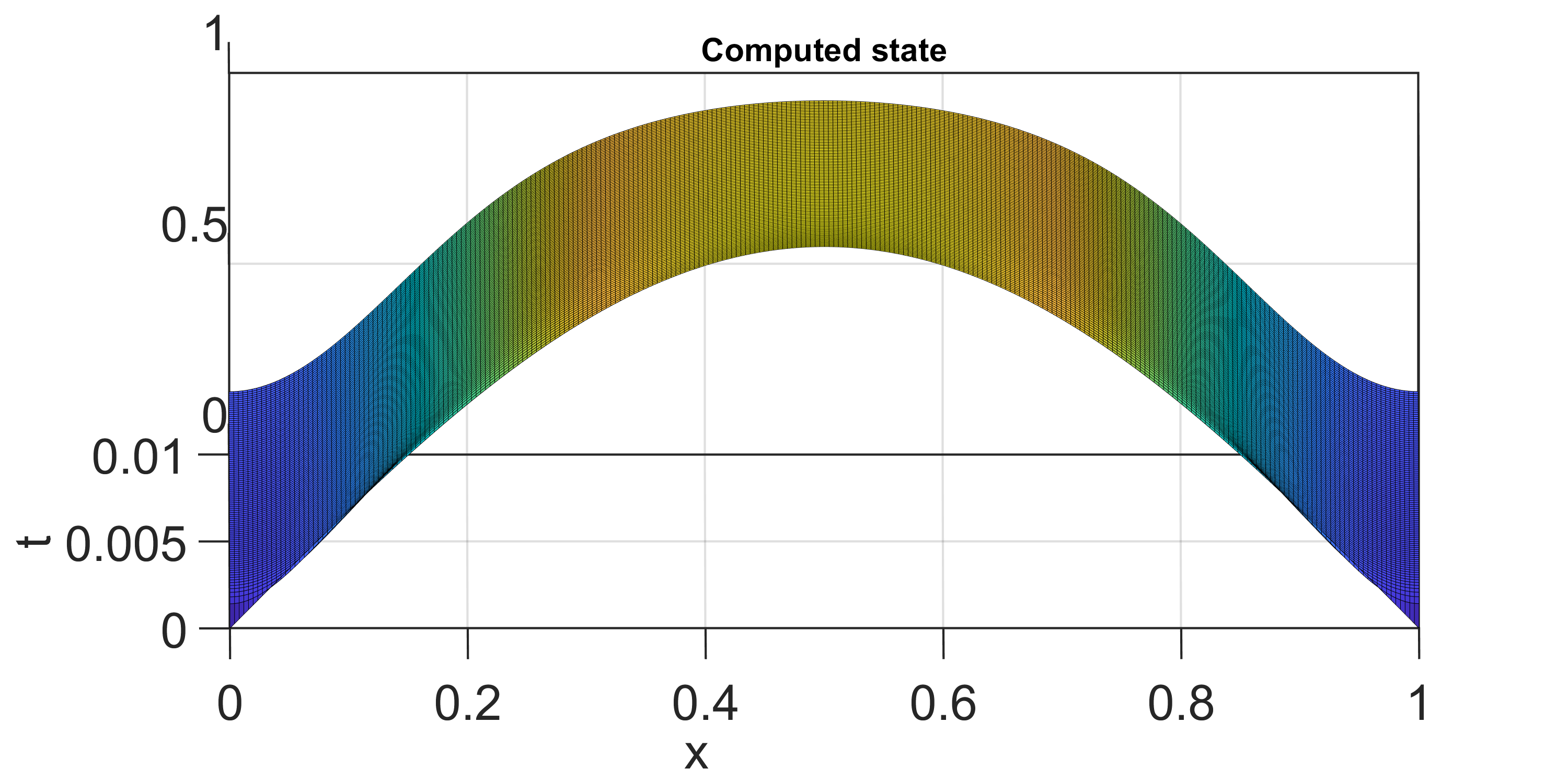} }}
     \subfloat{{\includegraphics[height=5.2cm,width=5cm]{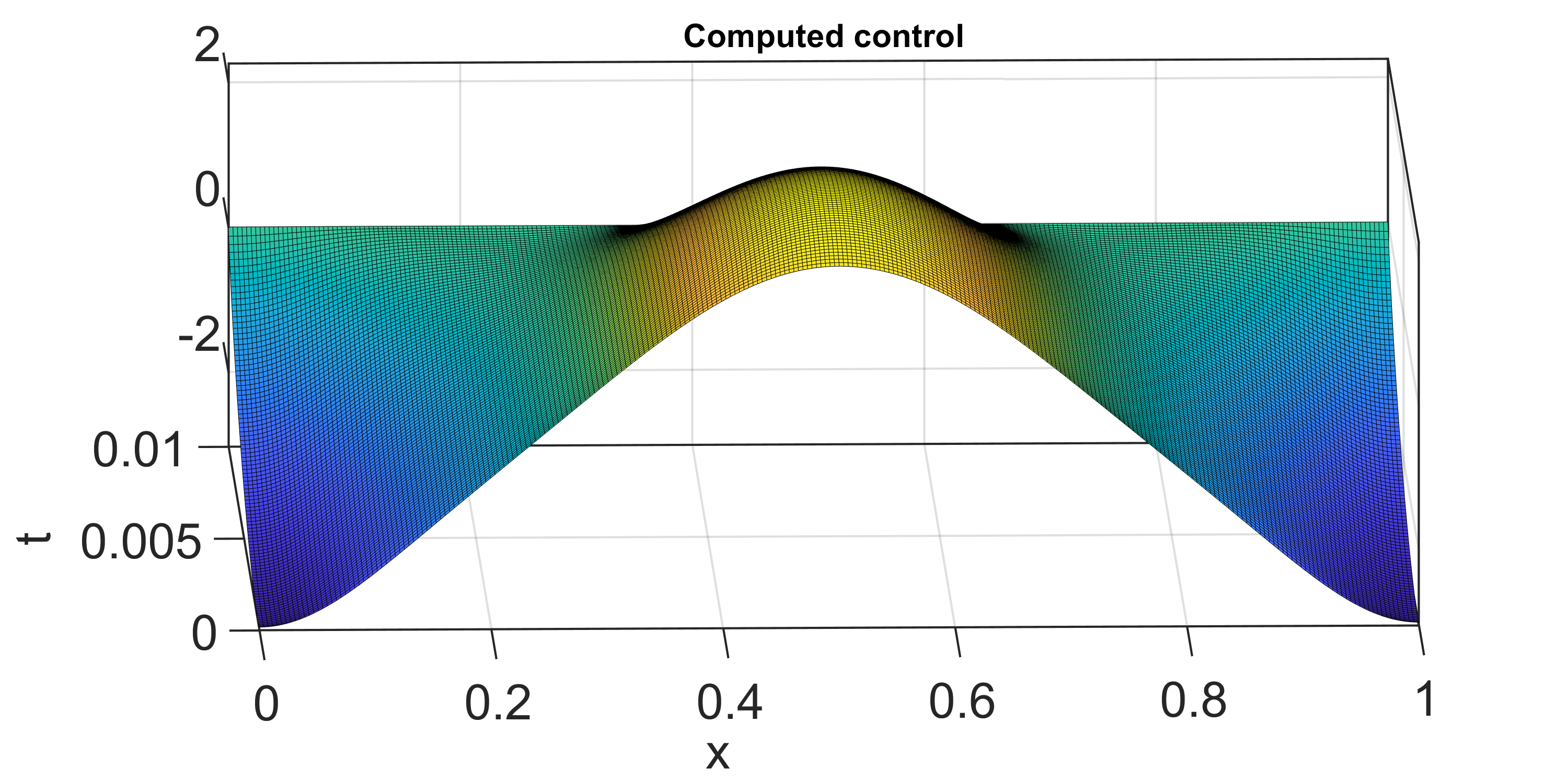} }}
    \caption{ On the left : the desired state; On the middle: the computed state; On the right : the computed control; for $\epsilon=0.09, \lambda=1 e{-4}$.}
    \label{s2}
\end{figure}
In Figure \ref{s2} we plot the desired state, the computed state and the optimal control for $\epsilon=0.09$ and $\lambda=0.0001$. It is evident that the calculated state is quite close to the target state. Lastly we run the final scheme $\textbf{S}3$ with the desired state $\widehat{y}(x,t)=0.1 e^{\cos(\pi x)} (3t^2 +1)$.  Figure \ref{s3} shows the target state, calculated state, and optimal control for $\epsilon=0.05$ and $\lambda=1 e{-4}$, with an acceptable degree of precision. 
\begin{figure}[h!]
    \centering
    \subfloat{{\includegraphics[height=5.2cm,width=5cm]{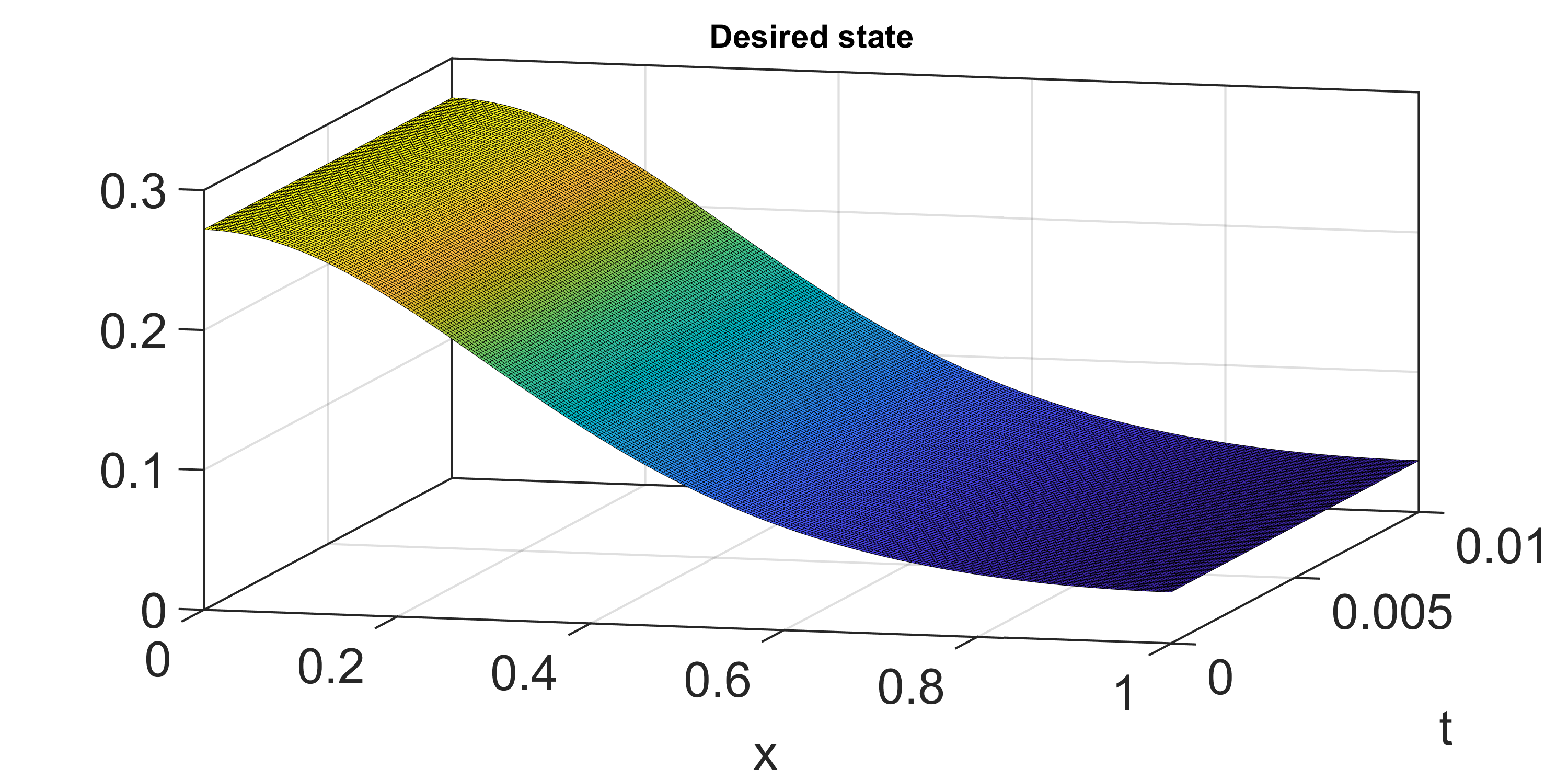} }}
     \subfloat{{\includegraphics[height=5.2cm,width=5cm]{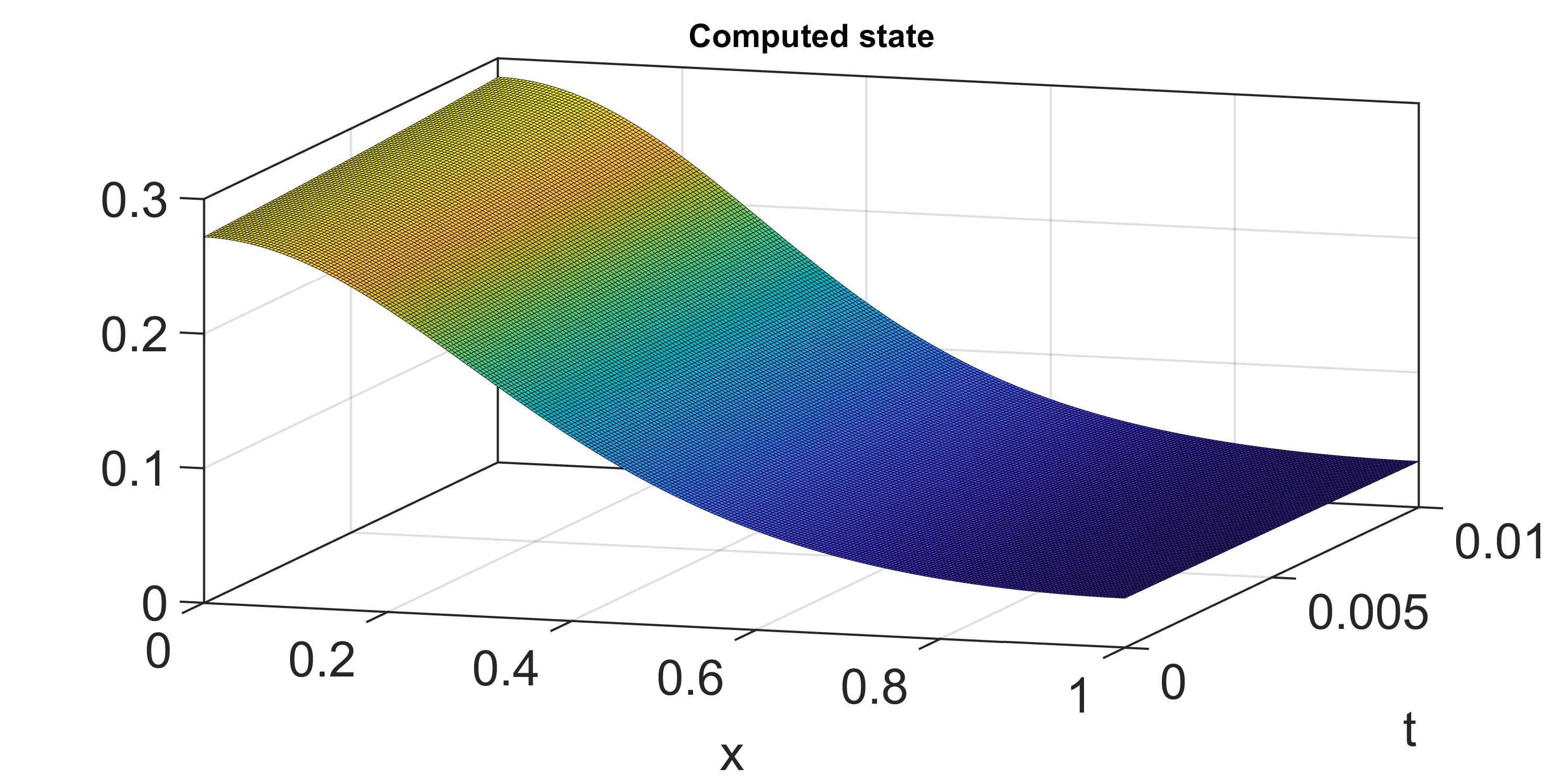} }}
     \subfloat{{\includegraphics[height=5.2cm,width=5cm]{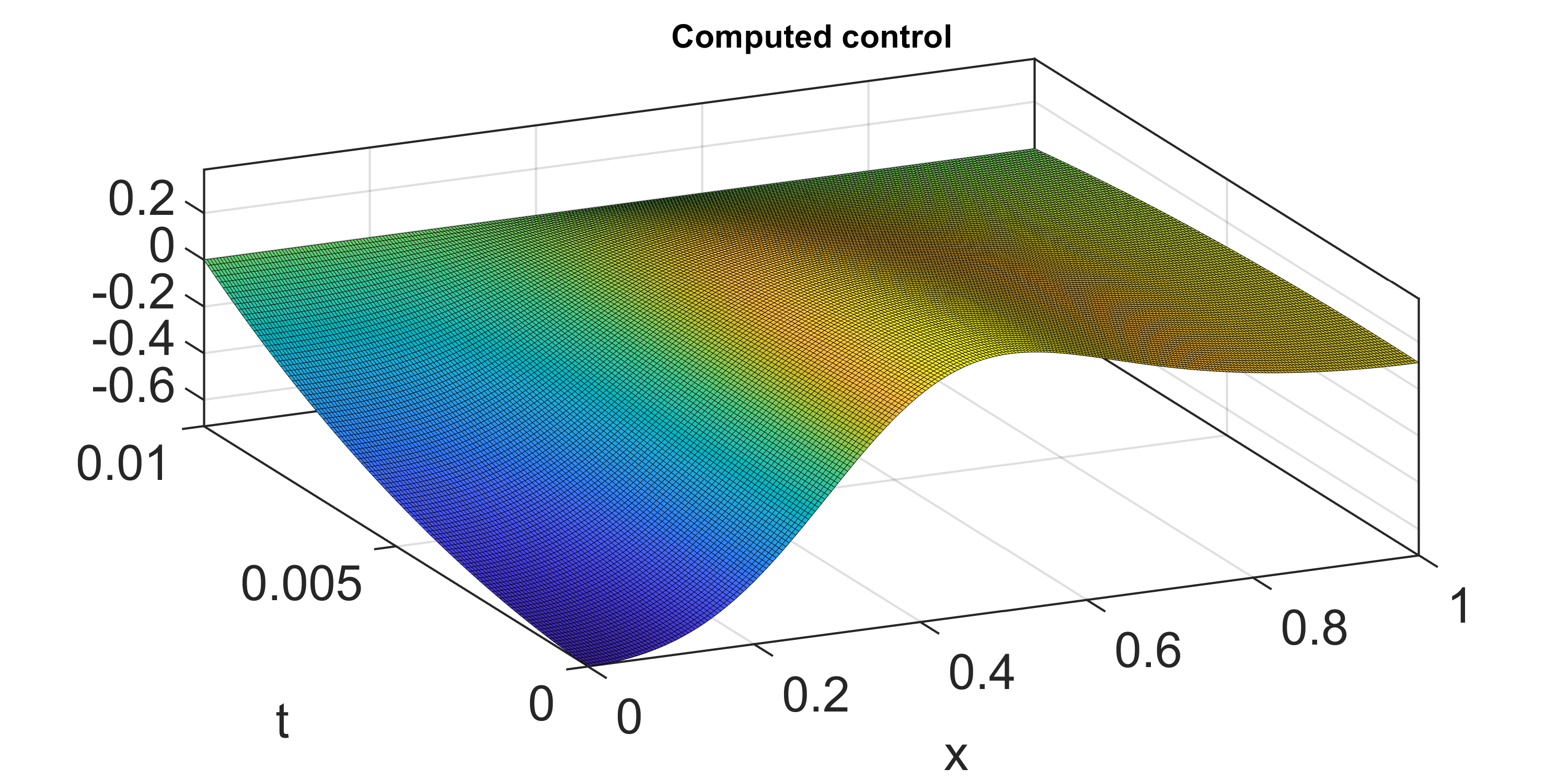} }}
    \caption{ On the left : the desired state; On the middle: the computed state; On the right : the computed control; For $\epsilon=0.05, \lambda=1 e{-4}$.}
    \label{s3}
\end{figure}
\subsection{Experiments in 2D}
For experiments in 2D we consider the spatial domain $\Omega=(0,1)^2$ with uniform mesh size $h=1/50$ on both direction for all the experiments in this subsection. First we run the scheme $\textbf{S}1$ with the desired state $\widehat{y}(x, t)=\cos(2\pi x y)$ over the time window $[0, T=0.01]$ with step size $\delta_t=5 e{-4}$.
\begin{figure}[h!]
    \centering
    \subfloat{{\includegraphics[height=5.5cm,width=7cm]{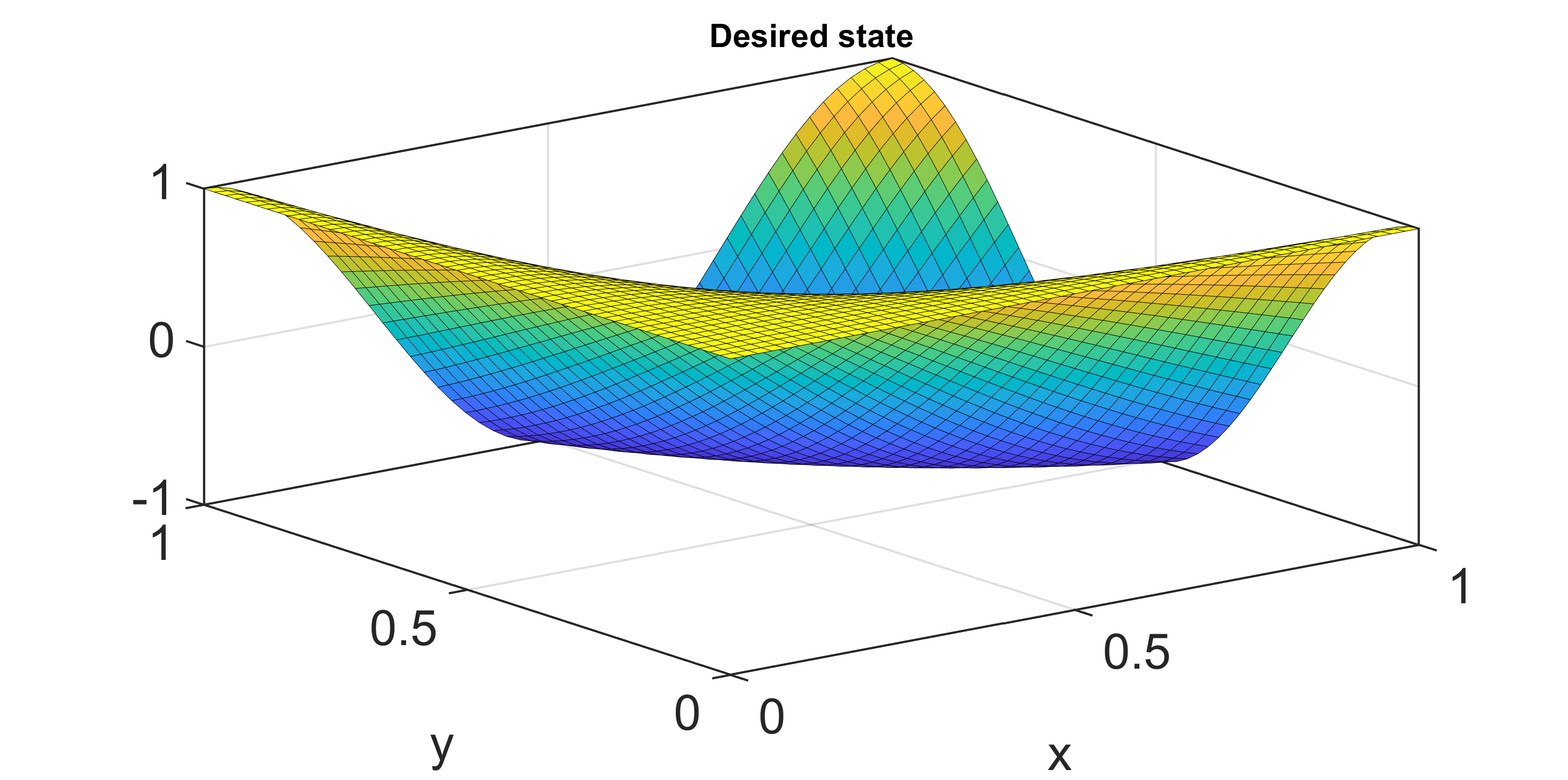} }}
     \subfloat{{\includegraphics[height=5.5cm,width=7cm]{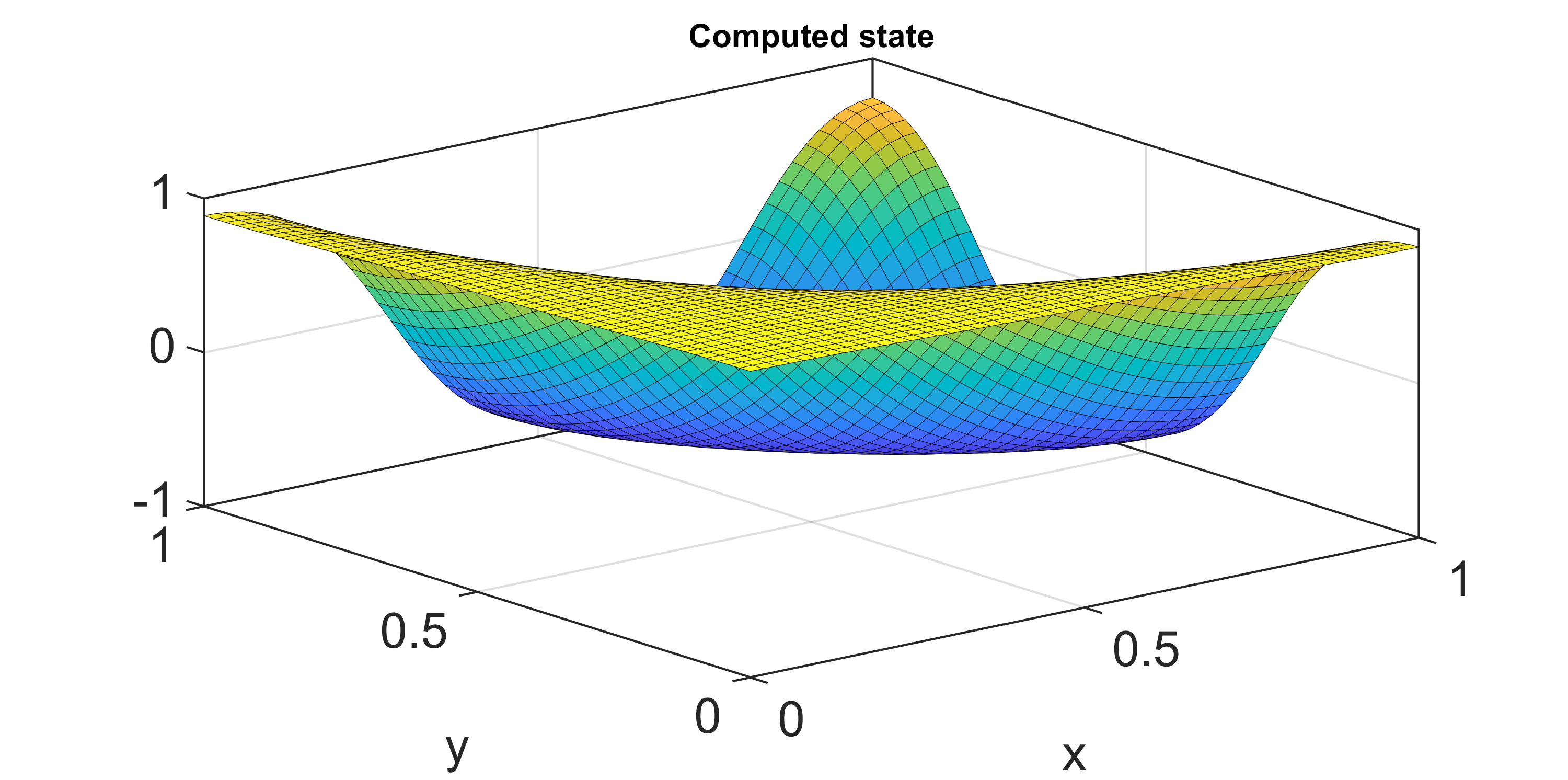} }}
    \caption{ On the left : the desired state; On the right : the computed state; for $\epsilon=0.1, \lambda=1 e{-4}$.}
    \label{s1_2d_state}
\end{figure}
\begin{figure}[h!]
    \centering
    \subfloat{{\includegraphics[height=5.5cm,width=7cm]{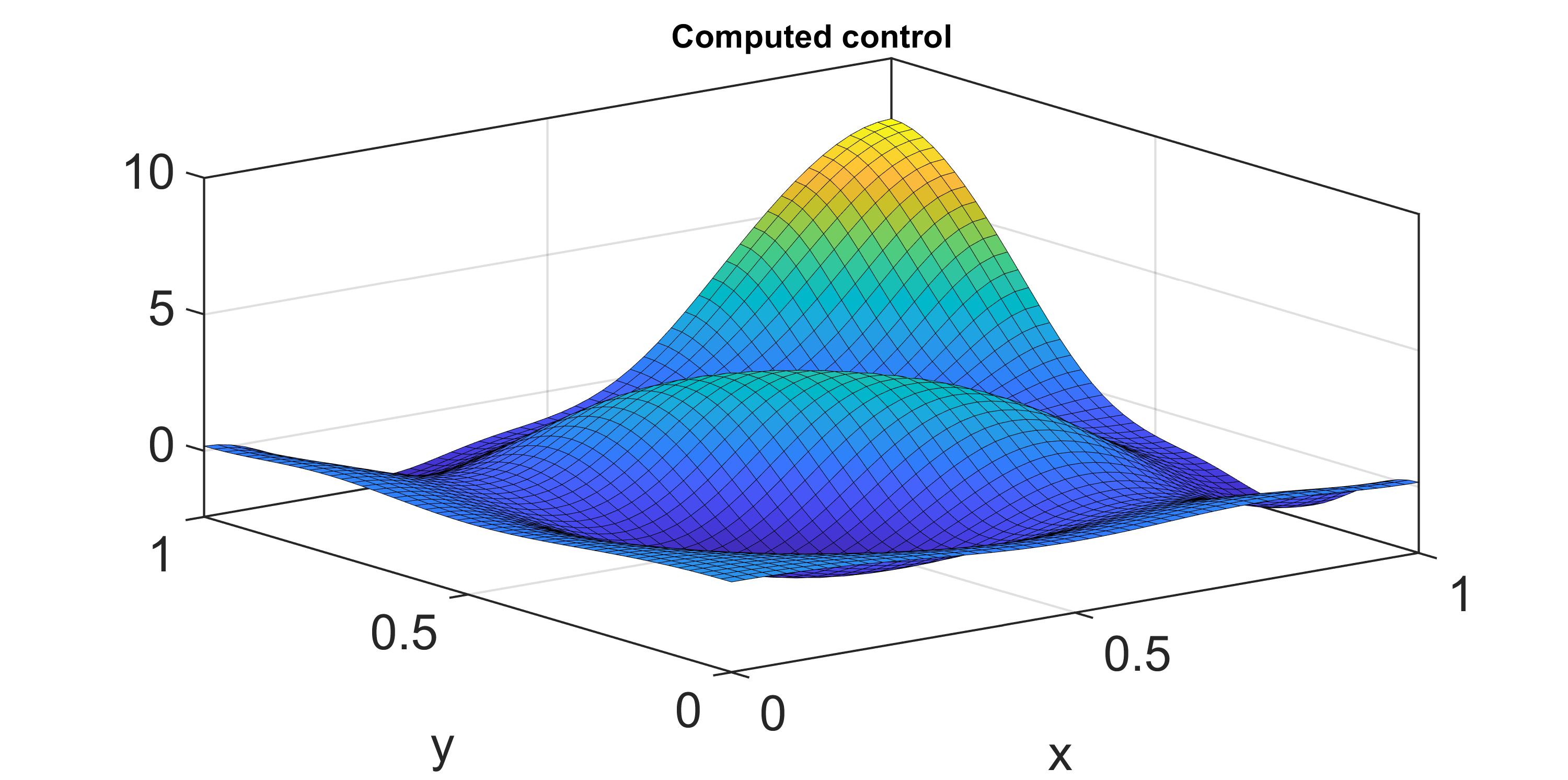} }}
     \subfloat{{\includegraphics[height=5.5cm,width=7cm]{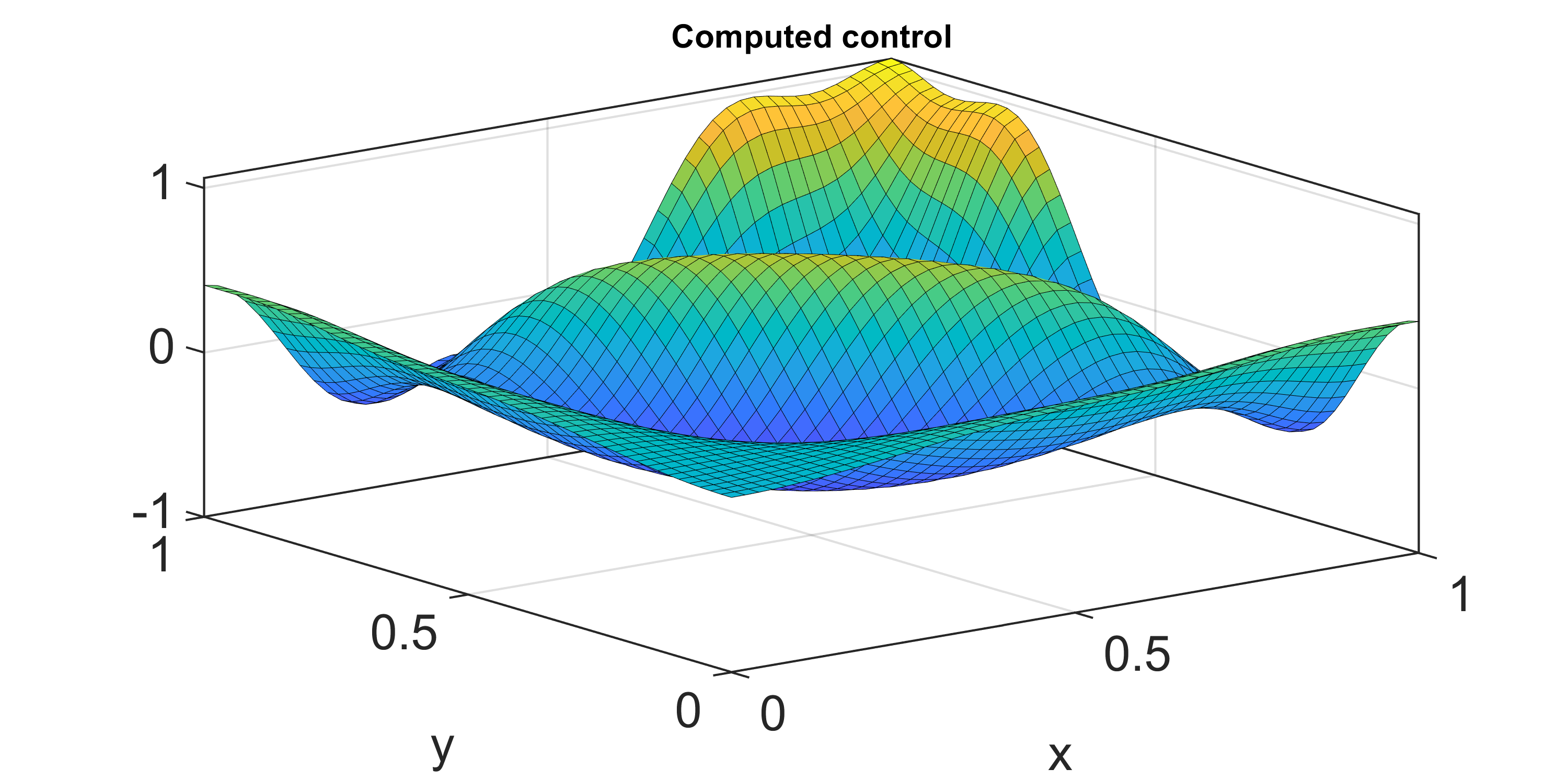} }}
    \caption{ On the left : the computed control at $t=0$; On the right : the computed control at $t=T-\delta_t$; for $\epsilon=0.1, \lambda=1 e{-4}$.}
    \label{s1_2d_control}
\end{figure}
\begin{figure}[h!]
    \centering
    \subfloat{{\includegraphics[height=5.5cm,width=7cm]{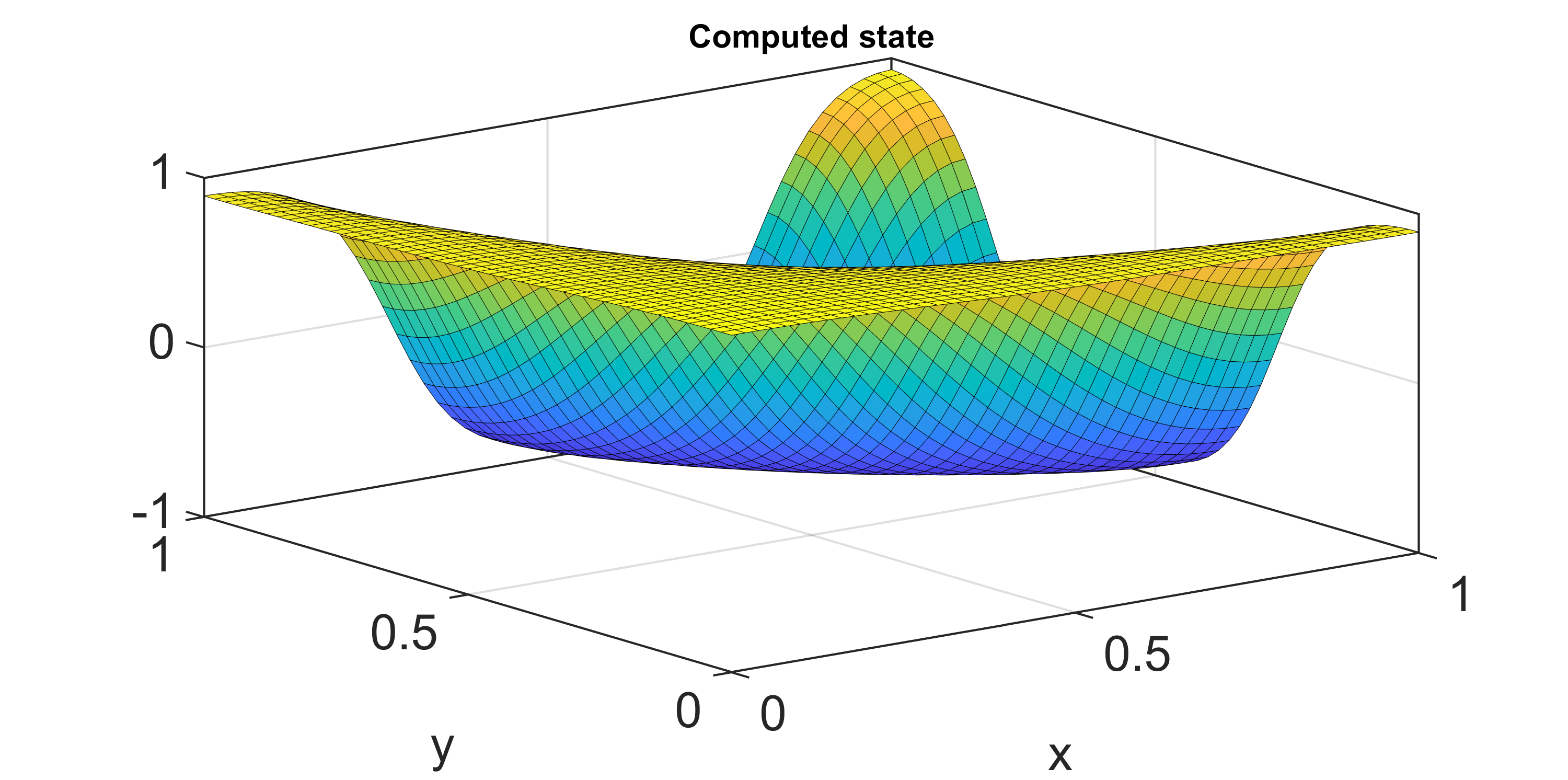} }}
     \subfloat{{\includegraphics[height=5.5cm,width=7cm]{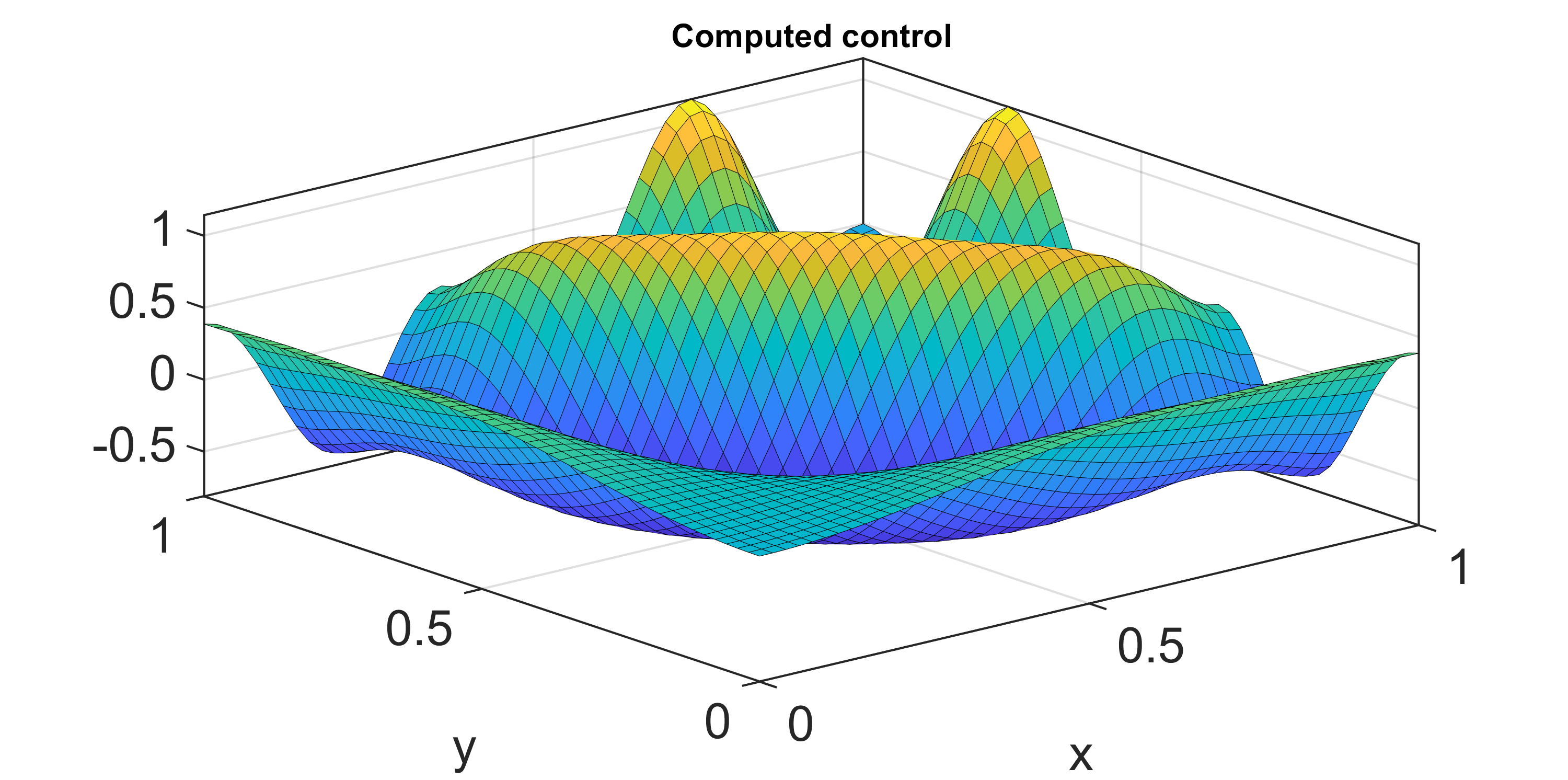} }}
    \caption{ On the left : the computed state at $t=T$; On the right : the computed control at $t=T-\delta_t$; for $\epsilon=0.07, \lambda=1 e{-4}$.}
    \label{s1_2d_control_state}
\end{figure}
We illustrate the intended state and calculated state in Figure \ref{s1_2d_state}.  We depict the calculated control at $t=0$ and $t=T-\delta_t$ in Figure  \ref{s1_2d_control}. We use $\epsilon=0.07$ to enhance the calculated state, as seen in the left panel of Figure \ref{s1_2d_control_state}; we additionally draw the corresponding control at $t=T-\delta_t$.

The scheme $\textbf{S}2$ is then executed to obtain the desired state $\widehat{y}(x, t)=0.5\cos(2\pi x) \cos(2\pi y)(e^{-0.1 t}+2t)$ over the time window $[0, T=0.03]$ with time step $\delta_t=1 e{-3} $
\begin{figure}[h!]
    \centering
    \subfloat{{\includegraphics[height=5.5cm,width=7cm]{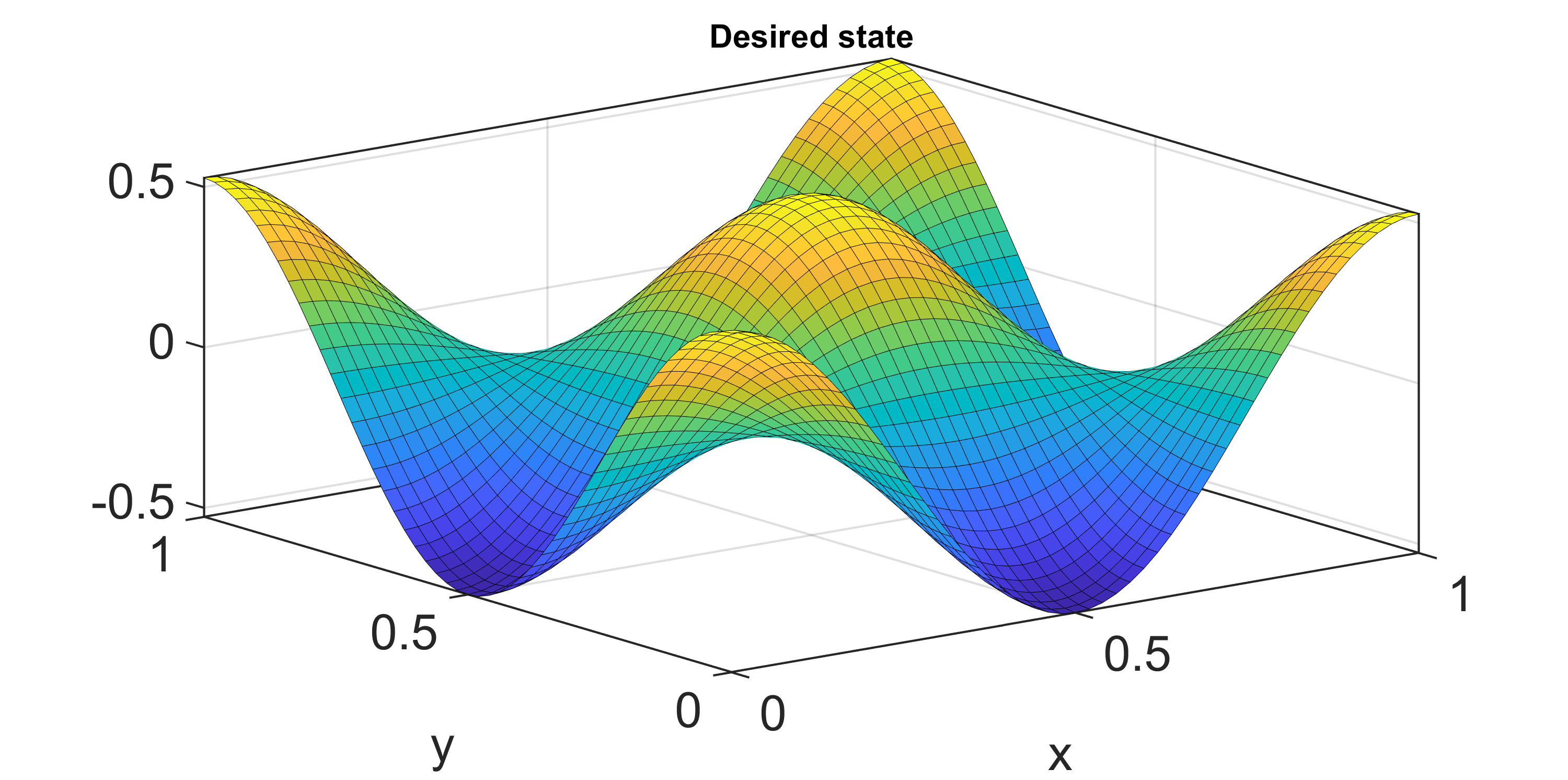} }}
     \subfloat{{\includegraphics[height=5.5cm,width=7cm]{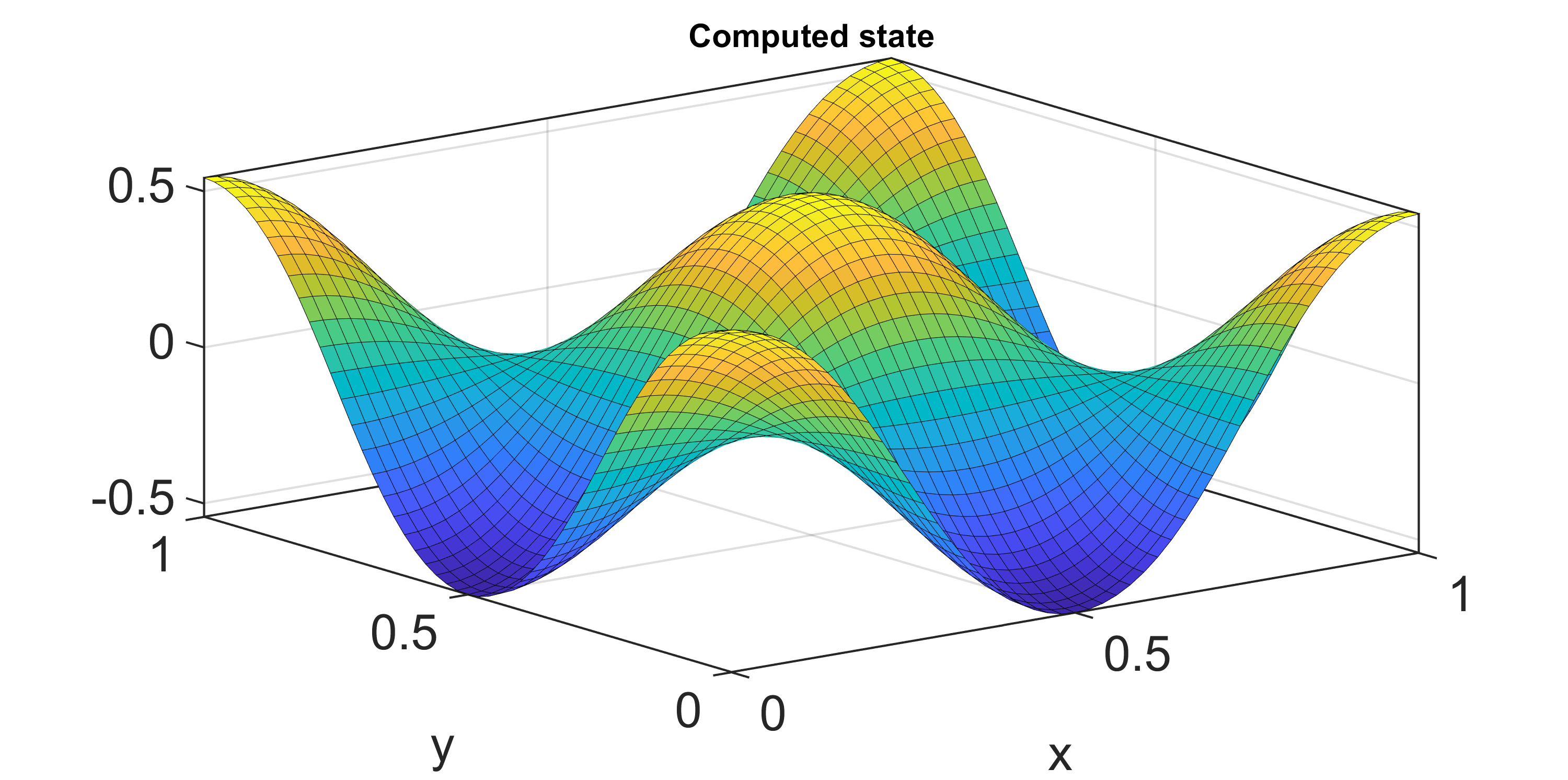} }}
    \caption{ On the left : the desired state; On the right : the computed state; for $\epsilon=0.1, \lambda=0.01$.}
    \label{s2_2d_state}
\end{figure} 
\begin{figure}[h!]
    \centering
    \subfloat{{\includegraphics[height=5.5cm,width=7cm]{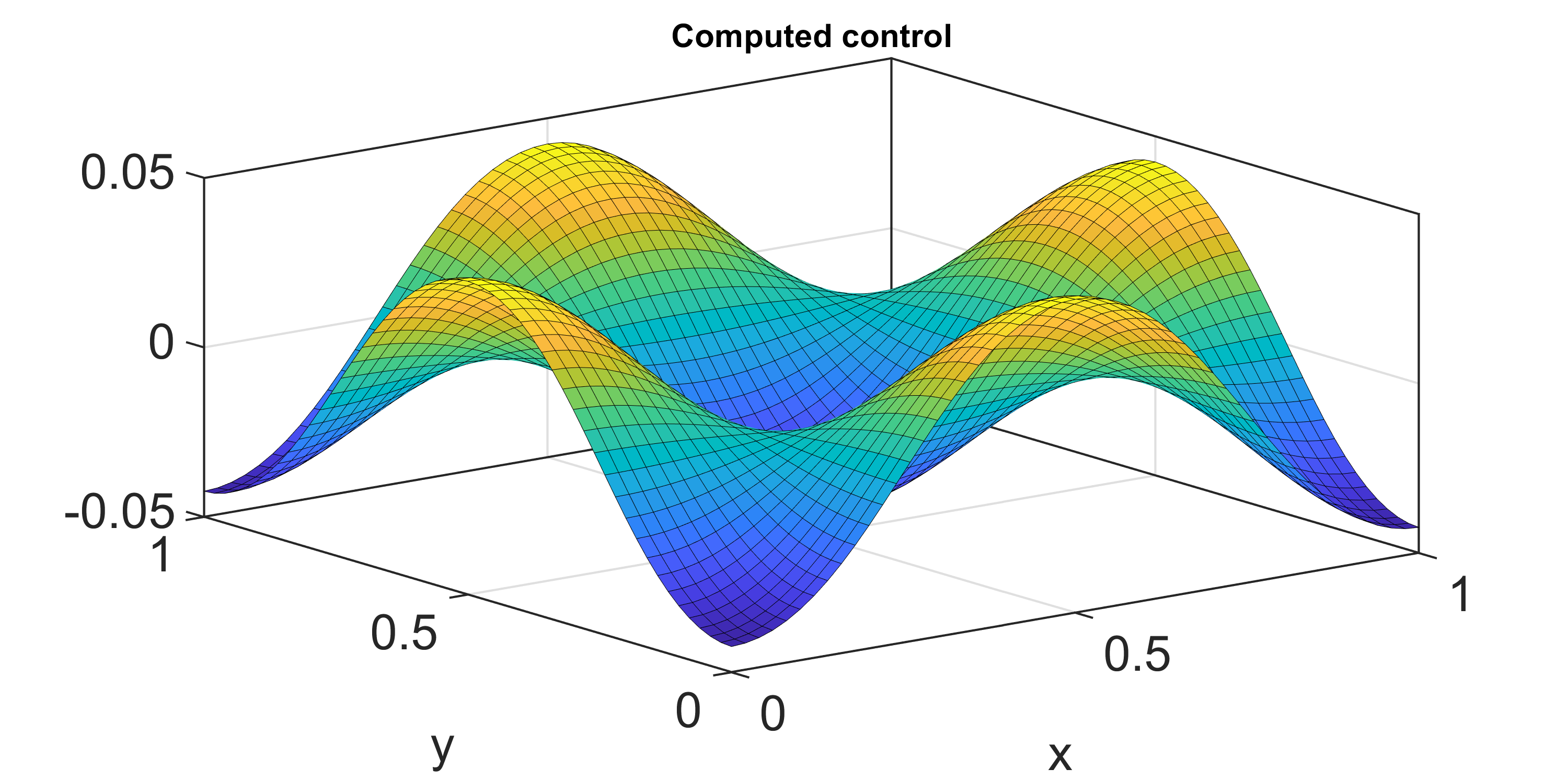} }}
     \subfloat{{\includegraphics[height=5.5cm,width=7cm]{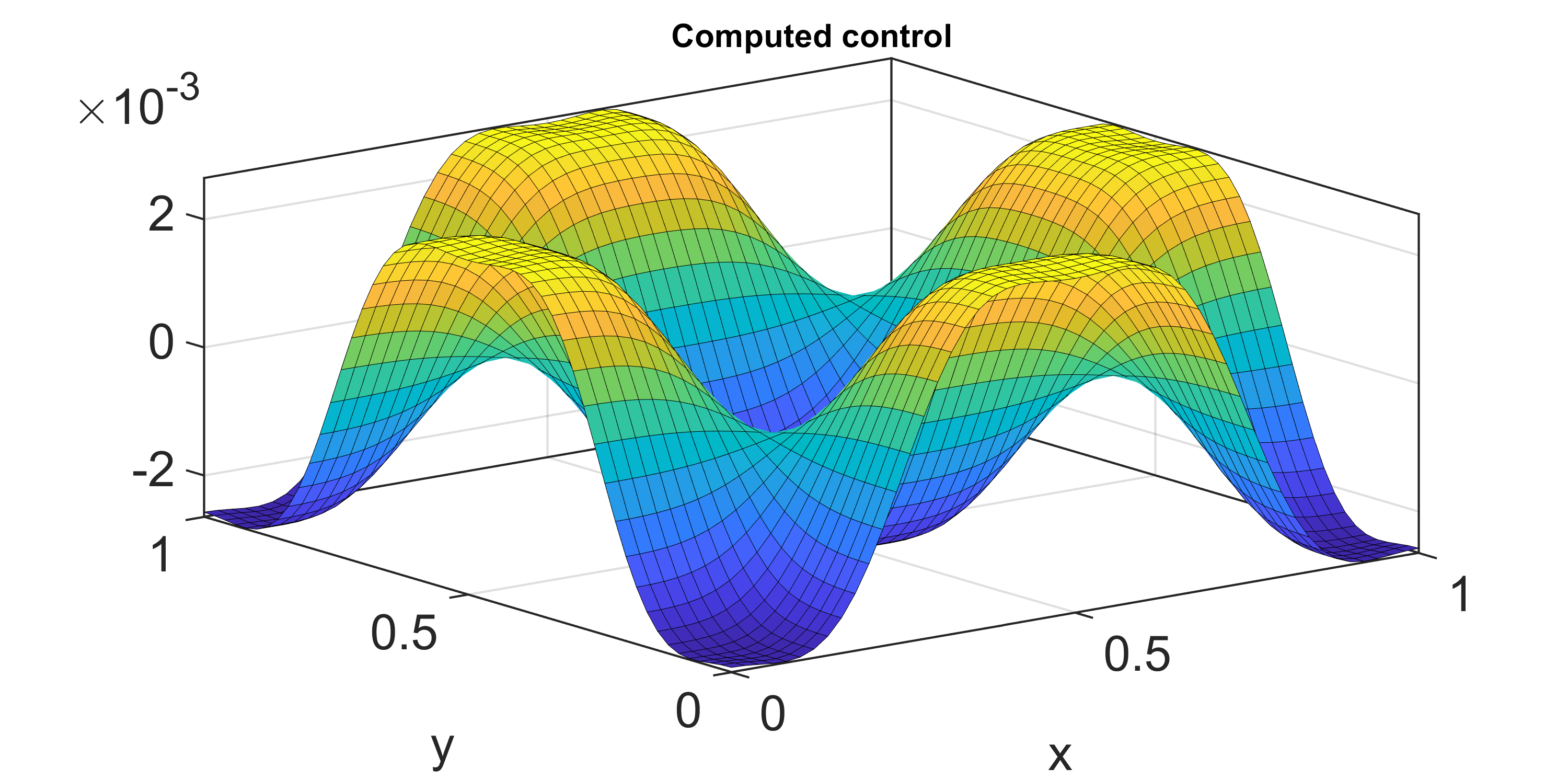} }}
    \caption{ On the left : computed control at $t=0$; On the right :  computed control at $t=T-\delta_t$; for $\epsilon=0.1, \lambda=0.01$.}
    \label{s2_2d_control}
\end{figure}
\begin{figure}[h!]
    \centering
    \subfloat{{\includegraphics[height=5.5cm,width=7cm]{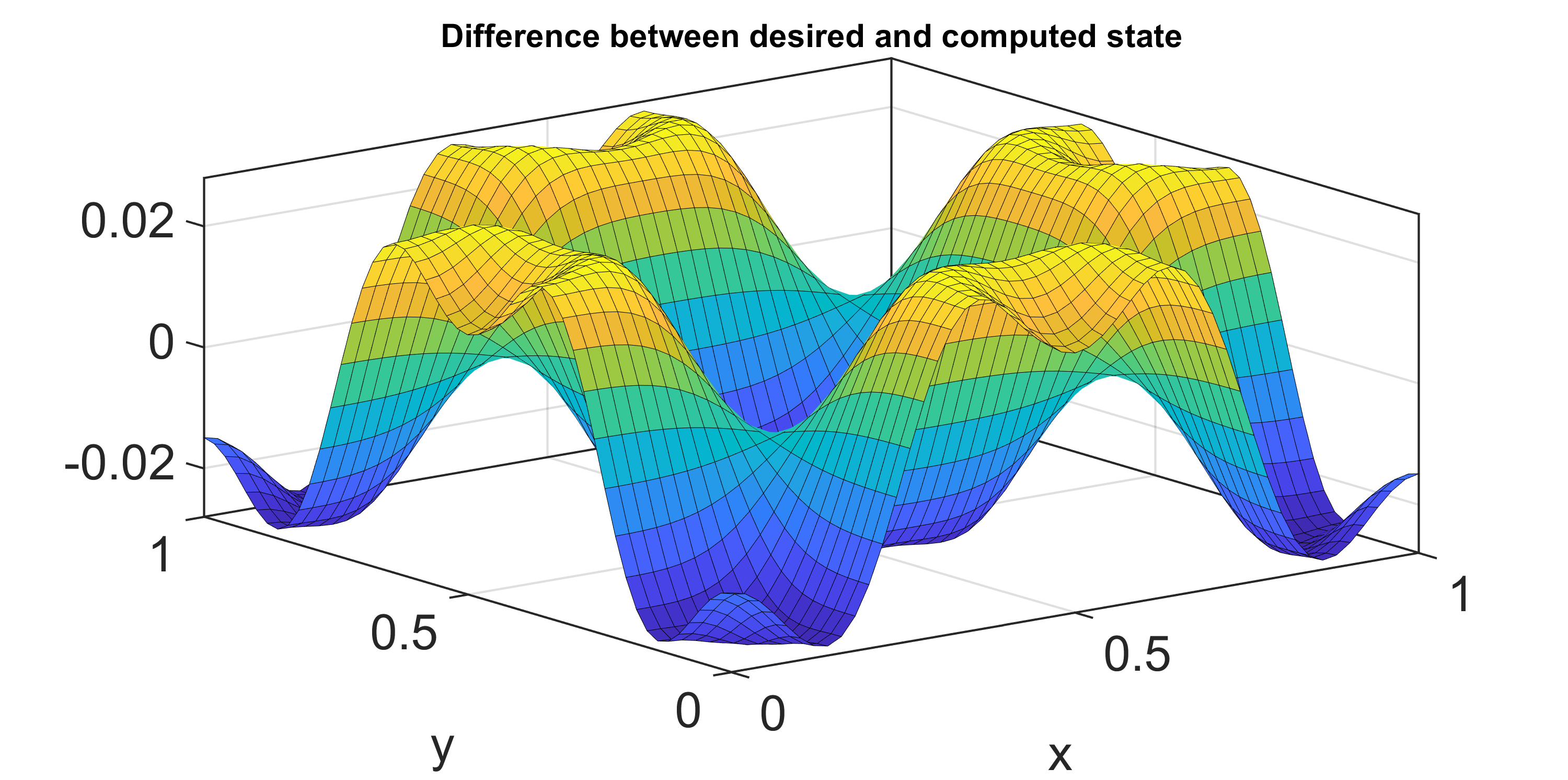} }}
    
    \caption{ Difference between state and control at $t=T$ for $\epsilon=0.1, \lambda=0.01$.}
    \label{s2_2d_diff}
\end{figure}
The final target state and calculated state are shown in Figure \ref{s2_2d_state}, and it is evident from the plot that the two plots can not be distinguished. At $t=0$ and $t=T-\delta_t$ in Figure 2\ref{s2_2d_control} we display the corresponding control. In Figure \ref{s2_2d_diff}, we exhibit the difference between the two states at the final time to show how near the calculated state is to the desired state. The difference, as we can see, is in the order of $10^{-2}$. Keep in mind that by selecting the relatively larger values of $\epsilon=0.1$ and $\lambda=0.01$, we are able to achieve the target state with an absolute difference of order $10^{-2}$.  At this point, we may thus conclude that, in addition to the choices of $\epsilon$ and $ \lambda$,  the target state choice also has an impact.

The scheme $\textbf{S}3$ is now executed to attain the desired state $\widehat{y}(x, t)=0.5\sin(x y)(1-2t)$ across the time frame $[0, T=0.01]$ with a step sige of $\delta_t=3.33 e{-4}$.

\begin{figure}[h!]
    \centering
    \subfloat{{\includegraphics[height=5.5cm,width=7cm]{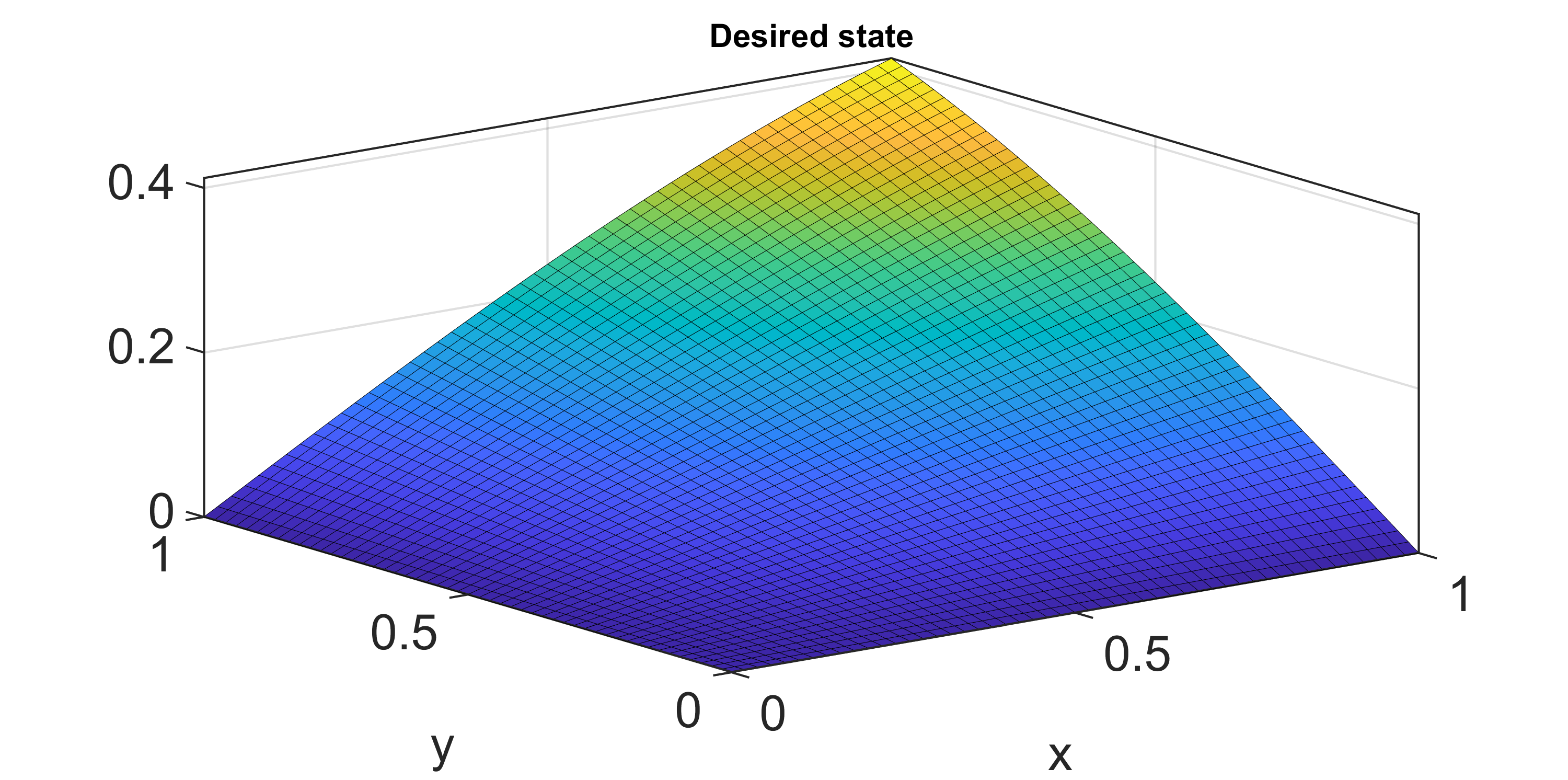} }}
     \subfloat{{\includegraphics[height=5.5cm,width=7cm]{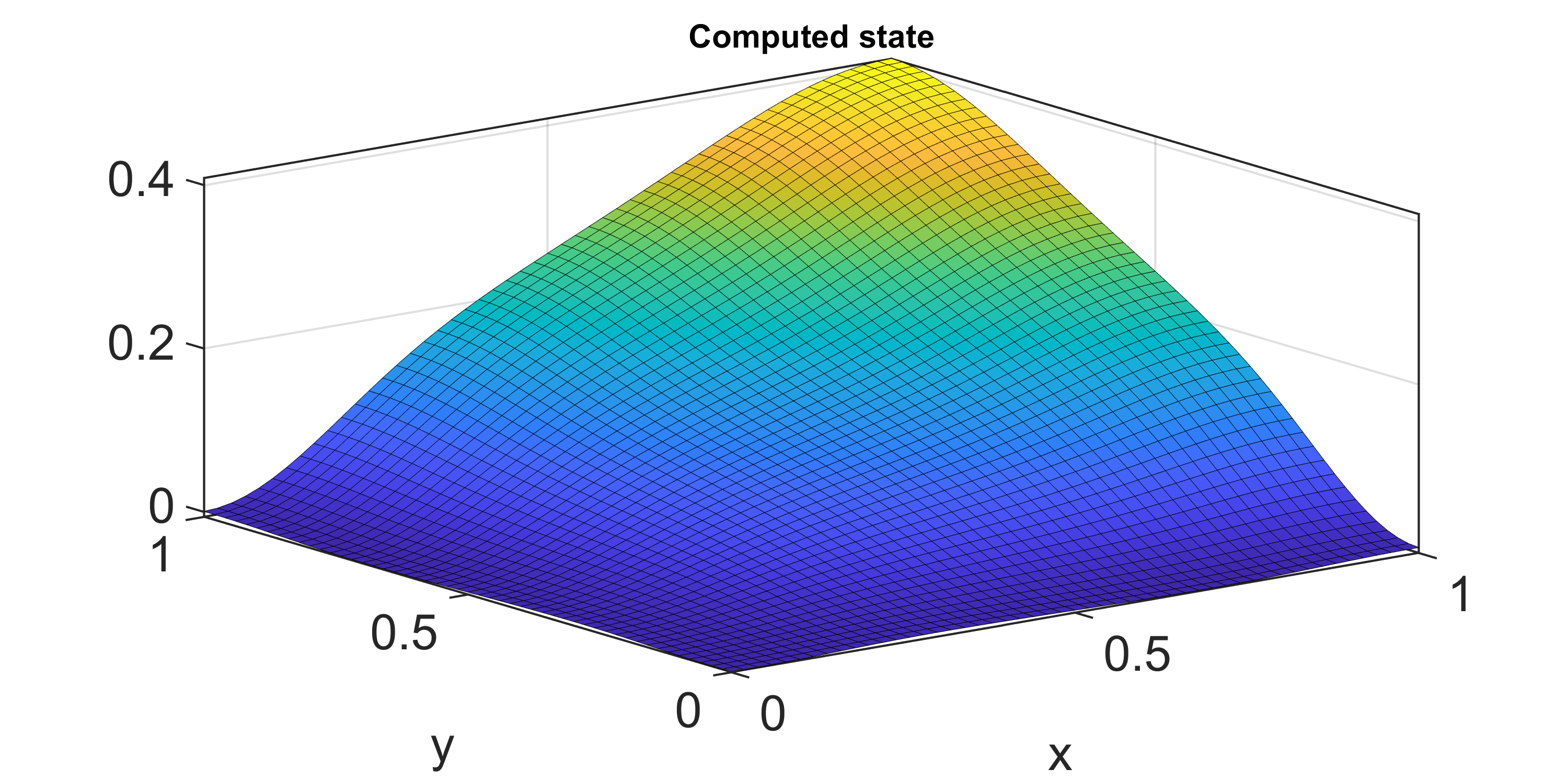} }}
    \caption{ On the left : the desired state; On the right : the computed state; for $\epsilon=0.08, \lambda=0.001$.}
    \label{s3_2d_state}
\end{figure} 
\begin{figure}[h!]
    \centering
    \subfloat{{\includegraphics[height=5.5cm,width=7cm]{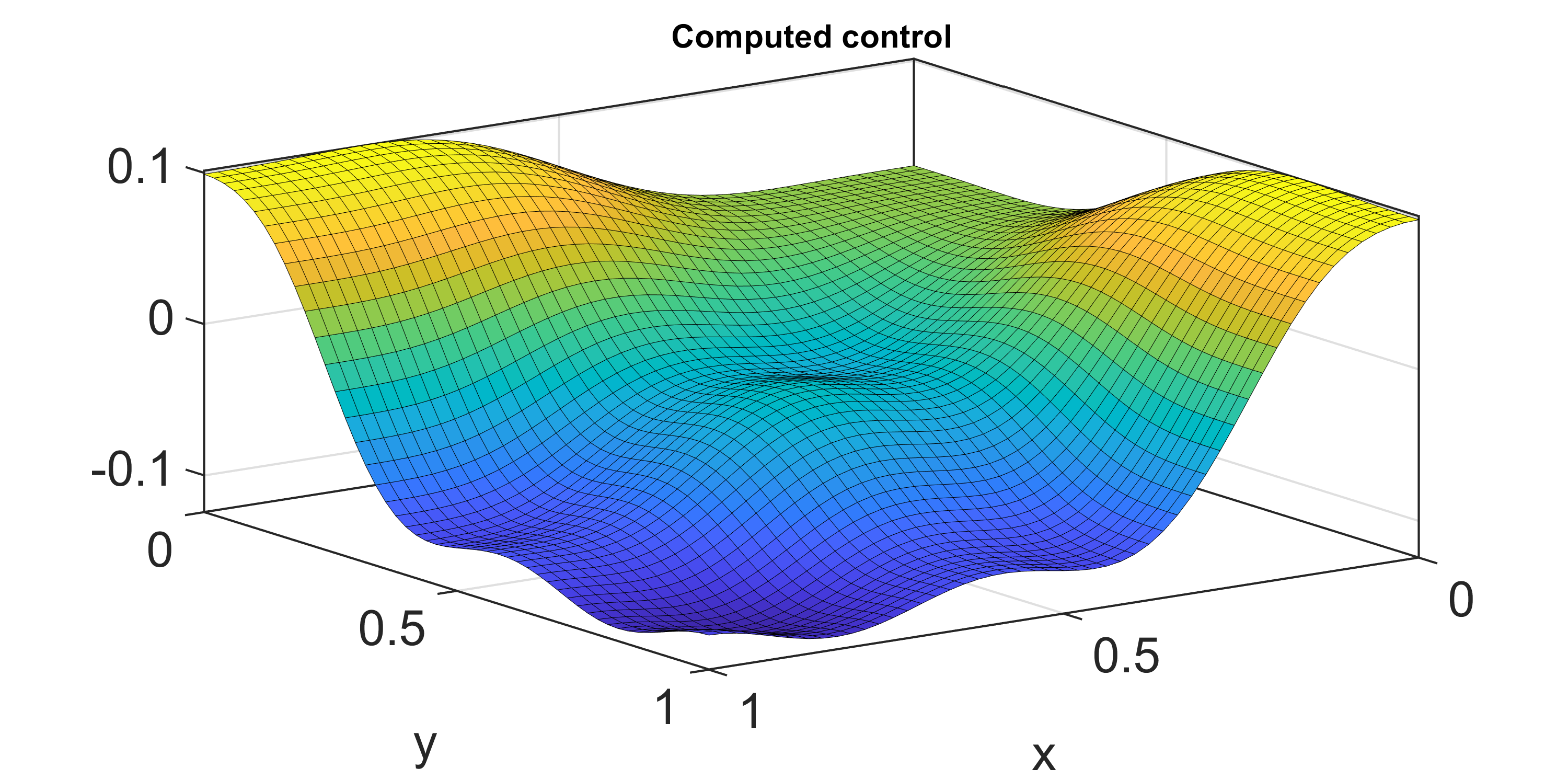} }}
     \subfloat{{\includegraphics[height=5.5cm,width=7cm]{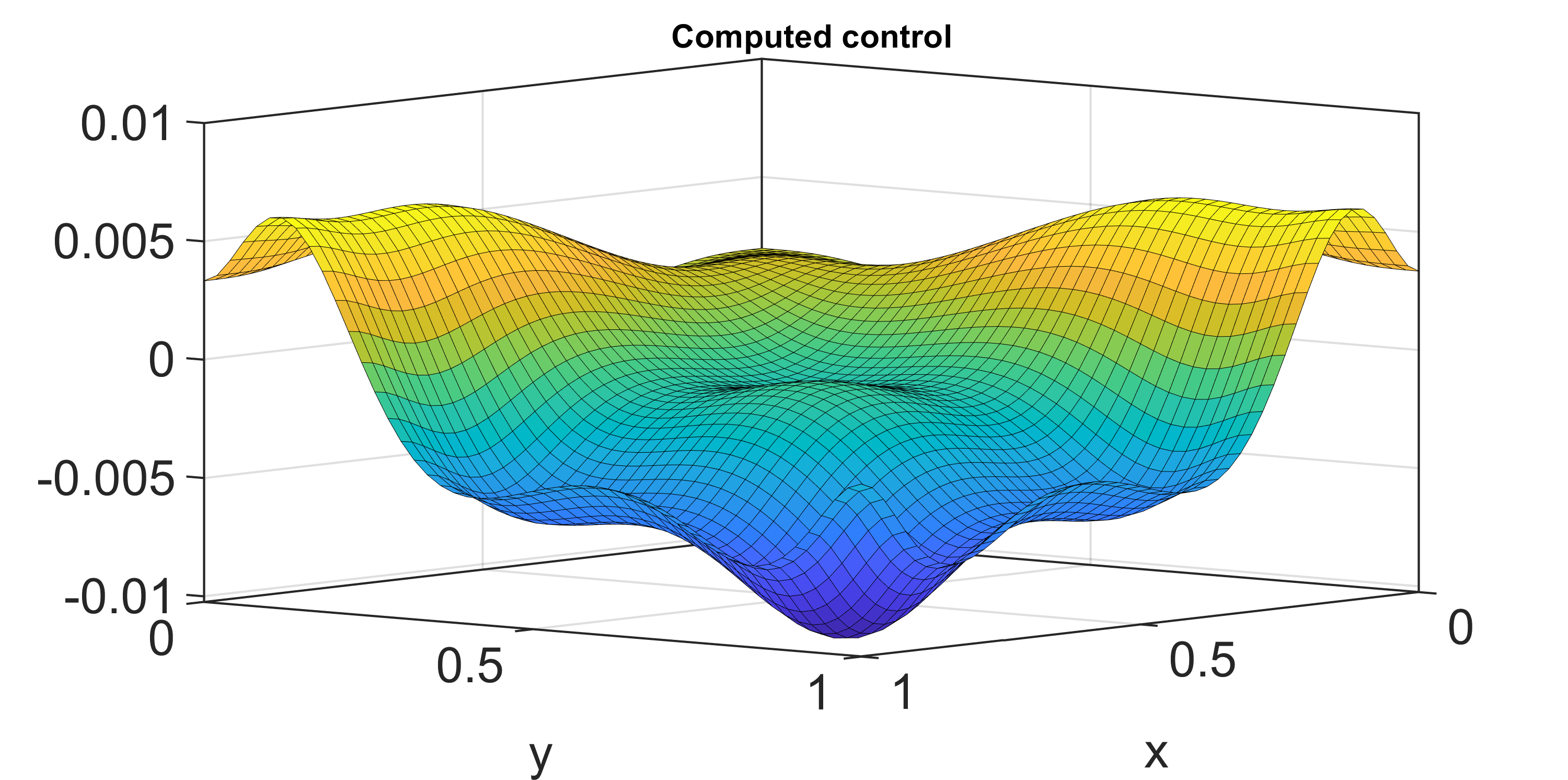} }}
    \caption{ On the left : computed control at $t=0$; On the right :  computed control at $t=T-\delta_t$; for $\epsilon=0.08, \lambda=0.001$.}
    \label{s3_2d_control}
\end{figure}
The final target state and calculated state at final time are shown in Figure \ref{s3_2d_state}. It is obvious from the plot that the computed state is close to the desired state. We show the corresponding control at $t=0$ and $t=T-\delta_t$ in Figure \ref{s3_2d_control}.

\section{Conclusions}
In this work, we formulate linear and nonlinear discretization schemes for the OCP with the CH equation as constrain. We present rigorous convergence analysis for all the proposed schemes. Lastly we verify the numerical accuracy and present numerical solution for various desired state in 1D and 2D.
\section*{Acknowledgement} The authors would like to thank the CSIR (File No : 09/1059(0019)/2018-EMR-I) and DST-SERB (File No : SRG/2019/002164) for the research grant and IIT Bhubaneswar for providing excellent research environment.

\bibliographystyle{siam}
\bibliography{OCPbib}

\begin{thebibliography}{10}

\bibitem{Cahn}
{\sc J.~W. Cahn}, {\em On spinodal decomposition}, Acta Metall, 9 (1961),
  pp.~795--801.

\bibitem{Hilliard}
{\sc J.~W. Cahn and W.~Hilliard}, {\em Free energy of a nonuniform system. i.
  interfacial free energy}, J. Chem. Phys., 28 (1958), pp.~258--267.

\bibitem{ChengFeng}
{\sc K.~Cheng, W.~Feng, C.~Wang, and S.~M. Wise}, {\em An energy stable fourth
  order finite difference scheme for the {C}ahn-{H}illiard equation}, J.
  Comput. Appl. Math., 362 (2019), pp.~574--595.

\bibitem{Christlieb}
{\sc A.~Christlieb, K.~Promislow, and Z.~Xu}, {\em On the unconditionally
  gradient stable scheme for the {C}ahn-{H}illiard equation and its
  implementation with {F}ourier method}, Commun. Math. Sci., 11 (2013),
  pp.~345--360.

\bibitem{DuNicolaides}
{\sc Q.~Du and R.~A. Nicolaides}, {\em Numerical analysis of a continuum model
  of phase transition}, SIAM J. Numer. Anal., 28 (1991), pp.~1310--1322.

\bibitem{duan2015optimal}
{\sc N.~Duan and X.~Zhao}, {\em Optimal control for the multi-dimensional
  viscous cahn--hilliard equation}, Electron. J. Differential Equations,
  (2015), p.~13.

\bibitem{Elliott}
{\sc C.~M. Elliott}, {\em The {C}ahn-{H}illiard model for the kinetics of phase
  separation}, in Mathematical models for phase change problems (\'{O}bidos,
  1988), vol.~88 of Internat. Ser. Numer. Math., Birkh\"{a}user, Basel, 1989,
  pp.~35--73.

\bibitem{elliott1987numerical}
{\sc C.~M. Elliott and D.~A. French}, {\em Numerical studies of the
  cahn-hilliard equation for phase separation}, IMA Journal of Applied
  Mathematics, 38 (1987), pp.~97--128.

\bibitem{ElliottZheng}
{\sc C.~M. Elliott and Z.~Songmu}, {\em On the {C}ahn-{H}illiard equation},
  Arch. Rational Mech. Anal., 96 (1986), pp.~339--357.

\bibitem{David}
{\sc D.~J. Eyre}, {\em Unconditionally gradient stable time marching the
  {C}ahn-{H}illiard equation}, in Computational and mathematical models of
  microstructural evolution ({S}an {F}rancisco, {CA}, 1998), vol.~529 of Mater.
  Res. Soc. Sympos. Proc., MRS, Warrendale, PA, 1998, pp.~39--46.

\bibitem{furihata2001stable}
{\sc D.~Furihata}, {\em A stable and conservative finite difference scheme for
  the cahn-hilliard equation}, Numerische Mathematik, 87 (2001), pp.~675--699.

\bibitem{heywood1990finite}
{\sc J.~G. Heywood and R.~Rannacher}, {\em Finite-element approximation of the
  nonstationary navier--stokes problem. part iv: error analysis for
  second-order time discretization}, SIAM Journal on Numerical Analysis, 27
  (1990), pp.~353--384.

\bibitem{hintermuller2012distributed}
{\sc M.~Hinterm{\"u}ller and D.~Wegner}, {\em Distributed optimal control of
  the cahn--hilliard system including the case of a double-obstacle homogeneous
  free energy density}, SIAM Journal on Control and Optimization, 50 (2012),
  pp.~388--418.

\bibitem{hinze2008optimization}
{\sc M.~Hinze, R.~Pinnau, M.~Ulbrich, and S.~Ulbrich}, {\em Optimization with
  PDE constraints}, vol.~23, Springer Science \& Business Media, 2008.

\bibitem{lee2014physical}
{\sc D.~Lee, J.-Y. Huh, D.~Jeong, J.~Shin, A.~Yun, and J.~Kim}, {\em Physical,
  mathematical, and numerical derivations of the cahn--hilliard equation},
  Computational Materials Science, 81 (2014), pp.~216--225.

\bibitem{shin2011conservative}
{\sc J.~Shin, D.~Jeong, and J.~Kim}, {\em A conservative numerical method for
  the cahn--hilliard equation in complex domains}, Journal of Computational
  Physics, 230 (2011), pp.~7441--7455.

\bibitem{StuartHumphries}
{\sc A.~M. Stuart and A.~R. Humphries}, {\em Model problems in numerical
  stability theory for initial value problems}, SIAM Rev., 36 (1994),
  pp.~226--257.

\bibitem{troltzsch2010optimal}
{\sc F.~Tr{\"o}ltzsch}, {\em Optimal control of partial differential equations:
  theory, methods, and applications}, vol.~112, American Mathematical Soc.,
  2010.

\bibitem{yong1991feedback}
{\sc J.~Yong and S.~Zheng}, {\em Feedback stabilization and optimal control for
  the cahn-hilliard equation}, Nonlinear Analysis: Theory, Methods \&
  Applications, 17 (1991), pp.~431--444.

\bibitem{zhao2011optimal}
{\sc X.~Zhao and C.~Liu}, {\em Optimal control problem for viscous
  cahn--hilliard equation}, Nonlinear Analysis: Theory, Methods \&
  Applications, 74 (2011), pp.~6348--6357.

\bibitem{zhao2014optimal}
\leavevmode\vrule height 2pt depth -1.6pt width 23pt, {\em Optimal control for
  the convective cahn--hilliard equation in 2d case}, Applied Mathematics \&
  Optimization, 70 (2014), pp.~61--82.

\bibitem{zheng2015optimal}
{\sc J.~Zheng and Y.~Wang}, {\em Optimal control problem for cahn--hilliard
  equations with state constraint}, Journal of Dynamical and Control Systems,
  21 (2015), pp.~257--272.

\end{thebibliography}

\end{document}